\def\dom{\mathop{\mathrm{Dom}}\nolimits}
\def\im{\mathop{\mathrm{Im}}\nolimits}
\def\ker{\mathop{\mathrm{Ker}}\nolimits}
\def\rank{\mathop{\mathrm{rank}}\nolimits}
\def\lb{\mathop{\mathrm{Lb}}\nolimits}
\def\ub{\mathop{\mathrm{Ub}}\nolimits}
\def\T{\mathscr{T}}
\def\PT{\mathscr{PT}}
\def\I{\mathscr{I}}
\def\S{\mathscr{S}}
\def\GRR{\mathcal{R}}
\def\GRL{\mathcal{L}}
\def\GRH{\mathcal{H}}
\def\GRD{\mathcal{D}}
\def\GRJ{\mathcal{J}}
\def\N{\mathbb N}
\def\R{\mathbb R}
\def\Q{\mathbb Q}
\def\Z{\mathbb Z}
\def\OO{\mathscr{O}}
\def\PO{\mathscr{PO}}
\def\OP{\mathscr{OP}}
\def\POP{\mathscr{POP}}
\def\POI{\mathscr{POI}}
\def\POPI{\mathscr{POPI}}
\def\cl#1{\overline{#1}}
\def\reg{\mathop{\mathrm{Reg}}\nolimits}
\newtheorem{theorem}{Theorem}[section]
\newtheorem{proposition}[theorem]{Proposition}
\newtheorem{corollary}[theorem]{Corollary}
\newtheorem{lemma}[theorem]{Lemma}
\newtheorem{example}[theorem]{Example}
\newtheorem{question}[theorem]{Question}
\newtheorem{problem}[theorem]{Problem} 
\newtheorem{definition}[theorem]{Definition}
\newenvironment{proof*}[1]{\begin{trivlist}\item[\hskip%
\labelsep{\bf #1.}]}%
{\qed\rm\end{trivlist}}
\title{On orientation-preserving transformations of a chain}
\author{V.H. Fernandes\footnote{This work was developed within the 
FCT Project UID/MAT/00297/2013 of CMA and of Departamento de Matem\'atica da Faculdade de Ci\^encias e Tecnologia da Universidade Nova de Lisboa.},\, M.M. Jesus\footnote{This work was developed within the 
FCT Project UID/MAT/00297/2013 of CMA and of Departamento de Matem\'atica da Faculdade de Ci\^encias e Tecnologia da Universidade Nova de Lisboa.}\, and B. Singha}
\newcommand{\lastpage}{\addresss}
\newcommand{\addresss}{\small \sf  
\noindent{\sc V\'\i tor H. Fernandes}, 
CMA, Departamento de Matem\'atica, 
Faculdade de Ci\^encias e Tecnologia, 
Universidade NOVA de Lisboa, 
Monte da Caparica, 
2829-516 Caparica, 
Portugal; 
e-mail: vhf@fct.unl.pt. 

\medskip

\noindent{\sc Manuel M. Jesus}, 
CMA, Departamento de Matem\'atica, 
Faculdade de Ci\^encias e Tecnologia, 
Universidade NOVA de Lisboa, 
Monte da Caparica, 
2829-516 Caparica, 
Portugal; 
e-mail: mrj@fct.unl.pt. 

\medskip

\noindent{\sc Boorapa Singha},  
Department of Mathematics and Statistics, 
Faculty of Science and Technology, 
Chiang Mai Rajabhat University, 
Chiang Mai 50300, 
Thailand;
email: boorapas@yahoo.com.  
}
\begin{document}

\maketitle \vspace*{-1.5cm}

\renewcommand{\thefootnote}{}

\footnote{2010 \emph{Mathematics Subject Classification}: 20M10, 20M20.}

\footnote{\emph{Key words}: transformation semigroups, orientation-preserving, order-preserving.}

\renewcommand{\thefootnote}{\arabic{footnote}}
\setcounter{footnote}{0}

\begin{abstract}
In this paper we introduce the notion of an orientation-preserving transformation on an arbitrary chain, as 
a natural extension for infinite chains of the well known concept for finite chains introduced in 1998 by McAlister \cite{McAlister:1998} and, independently, in 1999 by Catarino and Higgins \cite{Catarino&Higgins:1999}. 
We consider the monoid $\POP(X)$ of all orientation-preserving partial transformations on a finite or infinite chain $X$ and its submonoids 
$\OP(X)$ and $\POPI(X)$ of all orientation-preserving full transformations and of all orientation-preserving partial permutations on $X$, respectively. 
The monoid $\PO(X)$ of all order-preserving partial transformations on $X$ and its injective counterpart $\POI(X)$ are also considered. 
We study the regularity and give descriptions of the Green's relations of the monoids $\POP(X)$, $\PO(X)$, $\OP(X)$, $\POPI(X)$ and $\POI(X)$.  
\end{abstract}

\section{Introduction and preliminaries}\label{pre}

Let $X$ be an arbitrary chain (finite or infinite). 

Denote by $\PT(X)$ the monoid of all partial transformations on $X$
(under composition of maps) and by $\T(X)$ its submonoid of all full transformations on $X$. 
Let $\alpha\in\PT(X)$. 
We say that $\alpha$ is \textit{order-preserving} if $x\le y$
implies $x\alpha\le y\alpha$, for all $x,y\in\dom(\alpha)$. 
For $Y\subseteq\dom(\alpha)$, the transformation 
$\alpha$ is said to be \textit{order-preserving on $Y$} if its restriction to $Y$ is order-preserving.  
Denote by $\PO(X)$ the submonoid of $\PT(X)$ of all
order-preserving partial transformations, i.e.
$$
\PO(X)=\{\alpha\in\PT(X)\mid \mbox{$x\le y$ implies $x\alpha\le
y\alpha$, for all $x,y\in \dom(\alpha)$}\}, 
$$
and by $\OO(X)$ the submonoid of $\PO(X)$ of all order-preserving
full transformations, i.e. 
$$
\OO(X)=\{\alpha\in\T(X)\mid \mbox{$x\le y$ implies $x\alpha\le
y\alpha$, for all $x,y\in X$}\} . 
$$

For a finite chain $X$, it is well known, and easy to prove, that $\OO(X)$ is a regular semigroup \cite{Gomes&Howie:1992}. 
The problem for an infinite chain $X$ is much more involved. 
Nevertheless,  a characterization of those posets $P$ 
for which the semigroup of all endomorphisms of $P$ is regular was done by A\v\i zen\v stat in 1968 
\cite{Aizenstat:1968} (see also the paper \cite{Adams&Gould:1989} by Adams and Gould).  
A description of the regular elements of $\OO(X)$ was given in 2010 by Mora and Kemprasit \cite{Mora&Kemprasit:2010} and 
the largest regular subsemigroup of $\OO(X)$ was characterized by Fernandes et al. in 
\cite{Fernandes&al:2014}. In this last paper, the authors also described the Green's relations on $\OO(X)$. 
For a chain $X_n$ with $n$ elements, e.g. $X_n=\{1<2<\cdots<n\}$, the monoid $\OO(X_n)$, 
usually denoted by $\OO_n$, has been extensively studied since the sixties. See 
\cite{Aizenstat:1962,Aizenstat:1962b,Fernandes:1997,Fernandes:2002,Fernandes&al:2010,Fernandes&Volkov:2010,Gomes&Howie:1992,
Higgins:1995,Howie:1971,Repnitskii&Vernitskii:2000,Repnitskii&Volkov:1998,Vernitskii&Volkov:1995}.

\smallskip 

Let $a=(a_1,a_2,\ldots,a_t)$ be a sequence of $t$ ($t\geq0$)
elements from the chain $X_n$. We say that $a$ is \textit{cyclic} 
if there exists no more than one index
$i\in\{1,\ldots,t\}$ such that $a_i>a_{i+1}$,
where $a_{t+1}$ denotes $a_1$.
An element $\alpha\in\T(X_n)$ is called an \textit{orientation-preserving} 
transformation if the sequence of its
images $(1\alpha,\ldots,n\alpha)$ is cyclic.
It is a routine to check that the product
of two orientation-preserving transformations on $X_n$ is an
orientation-preserving transformation \cite{Catarino&Higgins:1999}.
Denote by $\OP_n$ the submonoid of $\T(X_n)$  whose
elements are orientation-preserving.
The notion of an orientation-preserving
transformation on a finite chain was introduced by McAlister in \cite{McAlister:1998} and,
independently, by Catarino and Higgins in \cite{Catarino&Higgins:1999}. 
Several properties of the monoid $\OP_n$ have been investigated in 
these two papers. A presentation for the monoid $\OP_n$, in terms of $2n-1$ generators, was given by Catarino in \cite{Catarino:1998}. Another presentation for $\OP_n$, in terms of $2$ (its rank) generators, was found by
Arthur and Ru\v{s}kuc \cite{Arthur&Ruskuc:2000}.
The congruences of $\OP_n$ were completely described by Fernandes et al. in \cite{Fernandes&Gomes&Jesus:2009}. 
Semigroups of orientation-preserving transformations were also studied in several other recent papers 
(e.g. see \cite{Araujo&al:2011,Dimitrova&al:2012,
Fernandes&Gomes&Jesus:2011,Fernandes&al:2016,Fernandes&Quinteiro:2011,Fernandes&Quinteiro:2012,
Fernandes&Quinteiro:2014,Zhao&Fernandes:2015}). 

\smallskip 

This paper is organized as follows. 
In the remaining part of the Section \ref{pre} we define orientation-preserving transformations on an arbitrary chain and present some basic properties, 
in particular we show that the set of all orientation-preserving transformations forms a semigroup. 
In Section \ref{reg} we give a criterion for the regularity of an orientation-preserving full transformation, to whose proof we dedicate all Section \ref{proof}. 
Finally, in Section \ref{RGR} we give descriptions of the Green's relations for the various semigroups considered. 

\medskip 

\begin{definition}\label{opdef}
Let $\alpha\in\PT(X)$.  We say that $\alpha$  is \textit{orientation-preserving} if $\alpha$ is the empty transformation or if there exists
a non-empty subset $Y$ of $\dom(\alpha)$ such that:
\begin{description}
\item (OP1) $\alpha$ is order-preserving both on $Y$ and on $\dom(\alpha)\setminus Y$; 
\item (OP2) For all $a\in Y$ and $b\in\dom(\alpha)\setminus Y$, we have $a\le b$ and $a\alpha\ge b\alpha$.
\end{description}
If $\alpha\ne\emptyset$, we call such a subset $Y$ an \textit{ideal} of $\alpha$. For $\alpha=\emptyset$, we define the \textit{ideal} of $\alpha$ as being the empty set. 
\end{definition}

Notice that, in the previous conditions, $Y$ is an order ideal of $\dom(\alpha)$ and $\dom(\alpha)\setminus Y$ is an order filter of $\dom(\alpha)$.

An orientation-preserving transformation can be represented pictorially as a mapping with the following geometrical property: representing in the plane its domain and its image as two concentric circles, it is possible to connect by means of continuous lines, which do not pairwise intersect, each element of its domain with the respective image (see Figure \ref{fig}). 
\begin{figure}[h] 
\centering \label{fig}
\includegraphics[scale=.15]{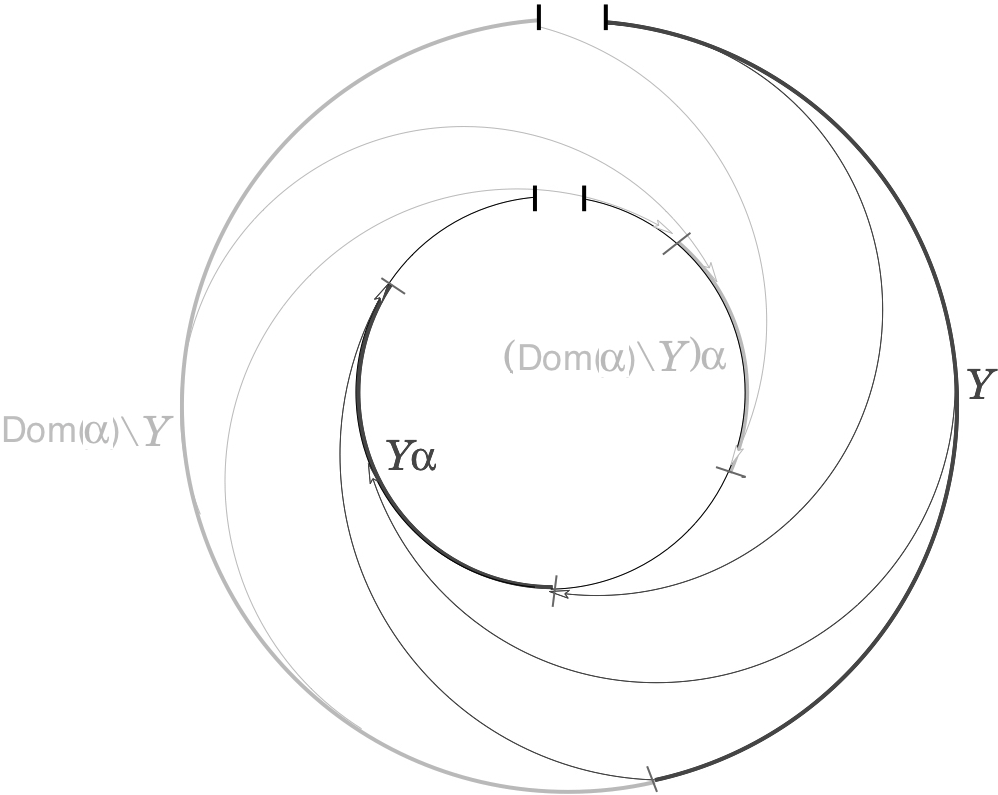}
\caption{An orientation-preserving transformation $\alpha$}
\end{figure}

For a finite chain, it is easy to realize that the above notion coincides with the notion of an orientation-preserving transformation introduced by McAlister \cite{McAlister:1998} and by Catarino and Higgins \cite{Catarino&Higgins:1999}. 
Furthermore, it is a natural extension for infinite chains. 

Denote by $\POP(X)$ the subset of $\PT(X)$ of all orientation-preserving partial transformations. 
Also, denote by $\OP(X)$ the subset of $\T(X)$ of all orientation-preserving full transformations, i.e. 
$\OP(X)=\POP(X)\cap\T(X)$. Clearly, for
$\alpha\in\PT(X)$, we have $\alpha\in\PO(X)$ if and only if $\alpha\in\POP(X)$
and $\alpha$ admits $\dom(\alpha)$ as an ideal. In particular, we have $\PO(X)\subseteq\POP(X)$ and $\OO(X)\subseteq\OP(X)$.

Also clear is the following lemma:

\begin{lemma}\label{non-c}
Let $\alpha\in \POP(X)$ admitting 
a non-empty proper subset $Y$ of $\dom(\alpha)$ as an ideal.  
Then $\alpha$ is non-constant if and only if there
exist $a\in Y$ and $b\in\dom(\alpha)\setminus Y$ such that $a\alpha > b\alpha$.
\end{lemma}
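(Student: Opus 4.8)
The plan is to establish the two implications separately, noting that the reverse direction is immediate while the forward direction rests on condition (OP2) together with the non-emptiness of both $Y$ and $Z:=\dom(\alpha)\setminus Y$. Since $Y$ is a non-empty \emph{proper} subset of $\dom(\alpha)$, both $Y$ and $Z$ are non-empty, a fact I will use crucially.

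For the implication from right to left, suppose there exist $a\in Y$ and $b\in Z$ with $a\alpha>b\alpha$. Then in particular $a\alpha\ne b\alpha$, so $\alpha$ takes at least two distinct values and is therefore non-constant. Nothing further is required here.

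For the converse, I would argue by contraposition. The key observation is that (OP2) already guarantees $a\alpha\ge b\alpha$ for \emph{every} $a\in Y$ and $b\in Z$. Hence, if there is no pair with $a\alpha>b\alpha$, we must in fact have $a\alpha=b\alpha$ for all $a\in Y$ and all $b\in Z$. This is exactly where I invoke that both $Y$ and $Z$ are non-empty: fixing some $a_0\in Y$ and $b_0\in Z$, the equalities $a\alpha=b_0\alpha$ (valid for every $a\in Y$) and $a_0\alpha=b\alpha$ (valid for every $b\in Z$), combined with $a_0\alpha=b_0\alpha$, force every element of $\dom(\alpha)=Y\cup Z$ to be mapped to the single value $b_0\alpha$. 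Thus $\alpha$ is constant, which is precisely the contrapositive of the claim.

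The argument is entirely elementary and presents no serious obstacle; the only point requiring attention is the appeal to the non-emptiness of both $Y$ and $Z$ (guaranteed by $Y$ being a non-empty proper subset), so that the reference elements $a_0,b_0$ exist and the collapse to a single image value goes through.
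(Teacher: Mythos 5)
Your proof is correct. The paper states this lemma without proof (it is introduced with ``Also clear is the following lemma''), and your argument --- the reverse implication being immediate, and the forward one by contraposition, using (OP2) to upgrade $a\alpha\ge b\alpha$ to equality for all $a\in Y$ and $b\in\dom(\alpha)\setminus Y$ and then collapsing all of $\dom(\alpha)$ to the single value $b_0\alpha$ via fixed reference elements $a_0\in Y$, $b_0\in\dom(\alpha)\setminus Y$ --- is precisely the routine verification the authors intend, with the non-emptiness of both parts correctly flagged as the essential hypothesis.
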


Observe that any constant mapping of $\PT(X)$ is order-preserving and so it is also 
orientation-preserving. Moreover, it admits
as an ideal any non-empty order ideal of its domain.  
Next, we show that for non-constant mappings the situation is completely different.

\begin{proposition}
Let $\alpha$ be a non-constant element of $\POP(X)$. 
Then $\alpha$ admits a unique ideal.
\end{proposition}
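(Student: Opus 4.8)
The plan is to establish the contrapositive in the following sharp form: if $\alpha\in\POP(X)$ admits two \emph{distinct} ideals, then $\alpha$ must be constant. So I would start by supposing $Y_1$ and $Y_2$ are distinct ideals of $\alpha$ and writing $D=\dom(\alpha)$. The first reduction is to note that, as remarked right after the definition, every ideal is an order ideal of $D$, and that in a chain the order ideals of $D$ are linearly ordered by inclusion: if $Y,Y'$ are order ideals with $a\in Y\setminus Y'$ and $a'\in Y'\setminus Y$, comparability in $X$ together with downward-closedness yields a contradiction either way. Hence, after relabelling, I may assume $Y_2\subsetneq Y_1$. This splits $D$ into three consecutive blocks $Y_2$, $Z:=Y_1\setminus Y_2$ (non-empty) and $D\setminus Y_1$, each lying below the next in the chain.

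The key step is to force $\alpha$ to be constant on $Y_1$. For any $u\in Y_2$ and $v\in Z$ we have $u\le v$ by (OP2) for $Y_2$ (as $v\in D\setminus Y_2$), and since $u,v\in Y_1$ with $\alpha$ order-preserving on $Y_1$ (by (OP1) for $Y_1$) this gives $u\alpha\le v\alpha$; on the other hand (OP2) for $Y_2$ directly gives $u\alpha\ge v\alpha$. Thus $u\alpha=v\alpha$ for all such $u,v$, and because both $Y_2$ and $Z$ are non-empty this pins $\alpha$ to a single value $c$ on the whole of $Y_1$.

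It then remains to propagate this value to $D\setminus Y_1$. For $w\in D\setminus Y_1$ and a fixed $v\in Z$ we have $v\le w$ by (OP2) for $Y_1$; since $v,w\in D\setminus Y_2$ and $\alpha$ is order-preserving there (by (OP1) for $Y_2$) we get $c=v\alpha\le w\alpha$, while (OP2) for $Y_1$ gives $v\alpha\ge w\alpha$, whence $w\alpha=c$. Therefore $\alpha$ is constant on all of $D$, contradicting the hypothesis and proving uniqueness. I expect the only real obstacle to be the bookkeeping — correctly matching each inequality to the axiom of the appropriate ideal, and making genuine use of the chain hypothesis both for the comparability of ideals and for the comparability of points across the three blocks; the degenerate case $Y_1=D$ needs no separate treatment, since constancy on $Y_1$ already closes the argument.
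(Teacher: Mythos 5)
Your proof is correct, but it takes a genuinely different route from the paper's. You establish the sharp contrapositive --- two distinct ideals force $\alpha$ to be constant --- by first using the fact that ideals are order ideals of $\dom(\alpha)$ and that order ideals of a chain are linearly ordered under inclusion, which reduces everything to nested ideals $Y_2\subsetneq Y_1$; you then pin $\alpha$ to a single value $c$ on $Y_1$ by playing (OP1) for $Y_1$ against (OP2) for $Y_2$ across the middle block $Z=Y_1\setminus Y_2$, and propagate $c$ to $\dom(\alpha)\setminus Y_1$ by playing (OP1) for $Y_2$ (order-preservation on $\dom(\alpha)\setminus Y_2$) against (OP2) for $Y_1$. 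The paper argues in the direct direction instead: it splits off the case $\alpha\in\PO(X)$, where $\dom(\alpha)$ is shown to be the only ideal, and for $\alpha\notin\PO(X)$ it fixes, via Lemma~\ref{non-c}, witnesses $a\in Z$ and $b\in\dom(\alpha)\setminus Z$ with $a\alpha>b\alpha$, then proves the two mutual inclusions $Y\subseteq Z$ and $Z\subseteq Y$ element by element through contradiction arguments --- never invoking comparability of the two ideals. Your route buys a cleaner structure: it dispenses with Lemma~\ref{non-c} and with the case split on $\alpha\in\PO(X)$ (the degenerate case $Y_1=\dom(\alpha)$ is absorbed, as you note), it makes explicit exactly where the chain hypothesis enters (comparability of the ideals, plus comparability of points across the three blocks), and it yields the slightly stronger statement that an element of $\POP(X)$ with two distinct ideals is constant, exhibiting the constant value directly rather than deriving a contradiction. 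The paper's argument, in exchange, treats an arbitrary pair of ideals symmetrically and reuses a lemma it has already stated; both proofs handle the empty transformation trivially, yours because the hypothesis of two distinct ideals cannot hold for it (its ideal is the empty set by convention), the paper's by an explicit remark.
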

\begin{proof}
If $\alpha$ is the empty transformation then, by definition, it has only the empty set as an ideal. 
If $\alpha\in\PO(X)$ then $\dom(\alpha)$
is the unique ideal of $\alpha$. In fact, if $\alpha$ would admit a
non-empty proper subset $Y$ of $\dom(\alpha)$ as an ideal
then, by Lemma \ref{non-c}, there exist $a\in Y$ and $b\in \dom(\alpha)\setminus Y$ 
such that $a\alpha > b\alpha$. Since $a\le b$, by (OP2), then $\alpha$ would not be
order-preserving.

Next, suppose that $\alpha\not\in\PO(X)$ and
let $Y$ and $Z$ be two ideals of $\alpha$. 
Then $Y$ and $Z$ are non-empty proper subsets of $\dom(\alpha)$. 
By Lemma \ref{non-c}, we may consider elements $a\in Z$ and $b\in \dom(\alpha)\setminus Z$ 
such that $a\alpha > b\alpha$. Furthermore, $a < b$, by (OP2). 

Take $x\in Y$ and, by contradiction, suppose that $x\not\in Z$. Then, by (OP2), $a < x$ and
$a\alpha\ge x\alpha$ (since $a\in Z$ and $x\in \dom(\alpha)\setminus Z$). 
From $a < x$ and $x\in Y$, it
follows that $a\in Y$, whence $a\alpha\le x\alpha$. Thus
$a\alpha=x\alpha$. As $b,x\in
\dom(\alpha)\setminus Z$, if $x\le b$ then $a\alpha=x\alpha\le
b\alpha < a\alpha$, a contradiction. Then $b < x$ and, since $x\in
Y$, it follows that $b\in Y$. Thus $a,b\in Y$ and $a < b$, whence
$a\alpha\le b\alpha$, again a contradiction. Therefore $x\in Z$
and so $Y\subseteq Z$.

Similarly, we prove that $Z\subseteq Y$, whence $Y=Z$, as
required.
\end{proof}

Another property easy to prove of an orientation-preserving transformation is the following:

\begin{proposition}\label{glued}
Let $\alpha$ be an element of $\POP(X)$ with
ideal $Y$. If $Y\alpha\cap(\dom(\alpha)\setminus Y)\alpha\neq\emptyset$ then
$Y\alpha\cap(\dom(\alpha)\setminus Y)\alpha=\{m\}$, for some $m\in X$. 
Moreover, in this case, $Y\alpha$
has a minimum element, $(\dom(\alpha)\setminus Y)\alpha$ has a maximum element and both
of these elements coincide with $m$.
\end{proposition}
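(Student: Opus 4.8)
The plan is to exploit condition (OP2) directly: it asserts that every image point from $Y$ lies above every image point from $\dom(\alpha)\setminus Y$, so a single common value must ``pinch'' the two image sets together at one point. Throughout I write $Z=\dom(\alpha)\setminus Y$ for brevity.

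First I would invoke the hypothesis to fix an element $m\in Y\alpha\cap Z\alpha$, together with witnesses $a\in Y$ and $b\in Z$ satisfying $a\alpha=m=b\alpha$. In particular this forces both $Y$ and $Z$ to be non-empty, so that (OP2) has content.

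Next I would pin down $m$ as the extreme values of the two image sets. For an arbitrary $a'\in Y$, applying (OP2) to the pair $a'\in Y$ and $b\in Z$ gives $a'\alpha\ge b\alpha=m$; since $m=a\alpha\in Y\alpha$, this shows $m$ is the minimum of $Y\alpha$. Symmetrically, applying (OP2) to $a\in Y$ and an arbitrary $b'\in Z$ gives $m=a\alpha\ge b'\alpha$, and since $m=b\alpha\in Z\alpha$, this shows $m$ is the maximum of $Z\alpha$. This already yields the ``moreover'' part of the statement, namely that $Y\alpha$ has minimum $m$ and $Z\alpha$ has maximum $m$.

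Finally, to obtain $Y\alpha\cap Z\alpha=\{m\}$, I would take any $w\in Y\alpha\cap Z\alpha$: membership $w\in Y\alpha$ together with the minimality just proved gives $m\le w$, while $w\in Z\alpha$ together with the maximality gives $w\le m$, whence $w=m$. There is no genuine obstacle here—the whole argument is the single observation that (OP2) places the block $Y\alpha$ above the block $Z\alpha$, so that any overlap is squeezed to the unique value $m$ that is simultaneously the bottom of the upper block and the top of the lower block.
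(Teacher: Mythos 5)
Your proof is correct and follows essentially the same route as the paper's: fix witnesses $a\in Y$, $b\in\dom(\alpha)\setminus Y$ with $a\alpha=m=b\alpha$, use (OP2) against these witnesses to show $m=\min(Y\alpha)=\max((\dom(\alpha)\setminus Y)\alpha)$, and squeeze any element of the intersection between the two extremes. The only difference is cosmetic: the paper notes that the minimality argument already forces the intersection to be $\{m\}$, whereas you make the squeeze explicit at the end.
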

\begin{proof}
Let  $m\in Y\alpha\cap(\dom(\alpha)\setminus Y)\alpha$ and take $a\in Y$ and
$b\in \dom(\alpha)\setminus Y$ such that $a\alpha=m=b\alpha$. 
Let $c\in Y\alpha$. Then $c=y\alpha$, for
some $y\in Y$. Since $b\in\dom(\alpha)\setminus Y$, we have $c=y\alpha\ge b\alpha=m$ and so
$m$ is the minimum of $Y\alpha$. 
Moreover, this also proves that 
$Y\alpha\cap(\dom(\alpha)\setminus Y)\alpha=\{m\}$, by the uniqueness of a minimum. 
On the other hand, let $d\in(\dom(\alpha)\setminus Y)\alpha$. 
Then $d=z\alpha$, for some $z\in\dom(\alpha)\setminus Y$. 
Now, since $a\in Y$, we have
$m=a\alpha\ge z\alpha=d$ and so $m$ is also the maximum of
$(\dom(\alpha)\setminus Y)\alpha$, as required.
\end{proof}

Now, as an application of the previous proposition, 
we present a property of the idempotents of $\OP(X)$ that we will use in Section \ref{proof}. 

\begin{proposition}\label{idps}
Let $\alpha$ be an idempotent of $\OP(X)$ and let $Y$ be an ideal of $\alpha$. 
Then, we have one and only one of the following conditions: 
\begin{enumerate}
\item $\alpha\in\OO(X)$;
\item $\im(\alpha)=Y\alpha\subseteq Y$, $\min\im(\alpha)$ exists and $(X\setminus Y)\alpha=\{\min\im(\alpha)\}$; 
\item $\im(\alpha)=(X\setminus Y)\alpha\subseteq X\setminus Y$, $\max\im(\alpha)$ exists and $Y\alpha=\{\max\im(\alpha)\}$. 
\end{enumerate}
\end{proposition}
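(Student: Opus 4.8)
The plan is to split on whether $\alpha$ is order-preserving. If $\alpha\in\MO(X)$ we are immediately in case~(1), so assume $\alpha\notin\MO(X)$. Since every constant map is order-preserving, $\alpha$ is then non-constant; and since a full transformation admitting its whole domain as an ideal lies in $\PO(X)$ and hence, being full, in $\MO(X)$, the given ideal $Y$ must be a proper subset of $X$ (it is non-empty by definition of an ideal of a non-empty transformation). I would then record two structural facts. First, because $Y$ is an order ideal and $X\setminus Y$ an order filter partitioning the chain $X$, every element of $Y$ is strictly below every element of $X\setminus Y$: if $q\le p$ with $p\in Y$, $q\in X\setminus Y$, downward-closedness of $Y$ forces $q\in Y$, a contradiction. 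Second, since $\alpha$ is idempotent, $\im(\alpha)$ is exactly the set of fixed points of $\alpha$, so each $p\in\im(\alpha)$ satisfies $p\alpha=p$.

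The crux is a dichotomy: $\im(\alpha)\subseteq Y$ or $\im(\alpha)\subseteq X\setminus Y$. To prove it I argue by contradiction, assuming there are fixed points $p\in\im(\alpha)\cap Y$ and $q\in\im(\alpha)\cap(X\setminus Y)$. From $p\in Y$ and $p\alpha=p$ I get $p\in Y\alpha$, and from $q\in X\setminus Y$ and $q\alpha=q$ I get $q\in(X\setminus Y)\alpha$; hence by (OP2) we have $p\ge q$. But $p\in Y$ and $q\in X\setminus Y$ force $p<q$ by the first structural fact, a contradiction. I expect this to be the heart of the argument: although it is short once the fixed-point description is in hand, the delicate point is keeping straight that a fixed point lying in $Y$ automatically belongs to $Y\alpha$ (and symmetrically in $X\setminus Y$), which is precisely what makes (OP2) applicable.

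In the case $\im(\alpha)\subseteq Y$ I would derive case~(2) using Proposition~\ref{glued}. For any $b\in X\setminus Y$ (such $b$ exist since $Y$ is proper), the point $b\alpha$ is a fixed point lying in $Y$, so $b\alpha\in Y\alpha$ as well as $b\alpha\in(X\setminus Y)\alpha$; thus $Y\alpha\cap(X\setminus Y)\alpha\neq\emptyset$, and Proposition~\ref{glued} gives $Y\alpha\cap(X\setminus Y)\alpha=\{m\}$ with $m=\min Y\alpha=\max(X\setminus Y)\alpha$. Consequently $b\alpha=m$ for every $b\in X\setminus Y$, that is, $(X\setminus Y)\alpha=\{m\}$, while $\im(\alpha)=Y\alpha\cup\{m\}=Y\alpha\subseteq Y$ and $\min\im(\alpha)=m$; this is exactly case~(2). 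The case $\im(\alpha)\subseteq X\setminus Y$ is entirely symmetric and yields case~(3), now with $Y\alpha=\{m\}$ and $m=\max\im(\alpha)$.
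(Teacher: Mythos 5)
Your proposal is correct and follows essentially the same route as the paper: both establish the dichotomy $\im(\alpha)\subseteq Y$ or $\im(\alpha)\subseteq X\setminus Y$ by contradiction from (OP2) together with idempotency (your fixed-point formulation $p\alpha=p$ for $p\in\im(\alpha)$ is just a repackaging of the paper's $(b\alpha)\alpha=b\alpha$), and both then invoke Proposition~\ref{glued} to pin down the singleton $\{m\}$ and identify it as $\min\im(\alpha)$ or $\max\im(\alpha)$. The only cosmetic difference is that you derive $(X\setminus Y)\alpha=\{m\}$ pointwise via fixed points, whereas the paper uses the identity $\im(\alpha)=\im(\alpha^2)=(\im(\alpha))\alpha$ to get $\im(\alpha)=Y\alpha$ or $\im(\alpha)=(X\setminus Y)\alpha$ first.
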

\begin{proof}
First, notice that, since $\alpha$ is idempotent, then $\alpha$ is order-preserving on $\im(\alpha)$. 
Hence, we have $\im(\alpha)\subseteq Y$ or $\im(\alpha)\subseteq X\setminus Y$. 
In fact, by contradiction, suppose that $\im(\alpha)\not\subseteq Y$ and $\im(\alpha)\not\subseteq X\setminus Y$. 
Then there exist $a,b\in X$ such that $a\alpha\not\in Y$ and $b\alpha\not\in X\setminus Y$, 
whence $a\alpha\in X\setminus Y$ and $b\alpha\in Y$.  It follows that  
$b\alpha<a\alpha$ and $(b\alpha)\alpha\ge(a\alpha)\alpha$, i.e. 
$b\alpha<a\alpha$ and $b\alpha\ge a\alpha$, which is a contradiction. 
Thus, as  
$Y\alpha\cup(X\setminus Y)\alpha=\im(\alpha)=\im(\alpha^2)=(\im(\alpha))\alpha$, 
if $\im(\alpha)\subseteq Y$ then $\im(\alpha)=Y\alpha$ and 
if $\im(\alpha)\subseteq X\setminus Y$ then $\im(\alpha)=(X\setminus Y)\alpha$. 

Suppose that $\alpha\not\in\OO(X)$. Then $X\setminus Y\ne\emptyset$, 
whence $Y\alpha\cap(X\setminus Y)\alpha\ne\emptyset$ 
and so, by Proposition \ref{glued}, 
we get $Y\alpha\cap(X\setminus Y)\alpha=\{m\}$, with $m=\min(Y\alpha)=\max((X\setminus Y)\alpha)$.  
Therefore, either $\im(\alpha)=Y\alpha$, and so  $\min\im(\alpha)=m$ and $(X\setminus Y)\alpha=\{m\}$,  
or $\im(\alpha)=(X\setminus Y)\alpha$, and so $\max\im(\alpha)=m$ and $Y\alpha=\{m\}$.

Finally, notice that the second and third conditions are clearly disjoint. 
Since $\alpha\in\OO(X)$ if and only if $Y=X$, then the first condition  is also disjoint from each of the other two.
\end{proof}

\medskip 

Next, we show that $\POP(X)$ is a closed subset of $\PT(X)$. 

\begin{proposition}\label{POPsemigroup}
$\POP(X)$ is a submonoid of $\PT(X)$.
Consequently, $\OP(X)$ is a submonoid of $\T(X)$ (and of $\POP(X)$ and of $\PT(X)$).  
\end{proposition}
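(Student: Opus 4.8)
The plan is to check the two defining properties of a submonoid: that $\POP(X)$ contains the identity transformation and that it is closed under composition. The identity of $\PT(X)$ is clearly order-preserving, hence lies in $\PO(X)\subseteq\POP(X)$, so only closure requires work. Let $\alpha,\beta\in\POP(X)$ and set $\gamma=\alpha\beta$. If $\gamma$ is empty it is orientation-preserving by definition, so assume $D=\dom(\gamma)\ne\emptyset$; note that $D\subseteq\dom(\alpha)$ and $D\alpha\subseteq\dom(\beta)$. The whole difficulty is to produce an ideal of $\gamma$, that is, a subset $W$ of $D$ satisfying (OP1) and (OP2) for $\gamma$.

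First I would record the structure inherited from the ideals of the factors. Let $Y$ be an ideal of $\alpha$ and $Z$ an ideal of $\beta$, and put $P=Y\cap D$ and $Q=D\setminus P$. A restriction of an order-preserving map is order-preserving, so $\alpha$ is order-preserving on each of $P$ and $Q$; moreover, by (OP2) for $\alpha$, every element of $P$ lies below every element of $Q$ and every image under $\alpha$ of a point of $P$ is $\ge$ every image of a point of $Q$. I then refine this partition according to $\beta$: set $P_1=\{p\in P:\,p\alpha\in Z\}$, $P_2=P\setminus P_1$, $Q_1=\{q\in Q:\,q\alpha\in Z\}$ and $Q_2=Q\setminus Q_1$. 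Since $\alpha$ is order-preserving on $P$ and on $Q$ and $Z$ is an order ideal of $\dom(\beta)$, the sets $P_1$ and $Q_1$ are downward closed in $P$ and in $Q$ respectively, so in $D$ the four blocks occur in the order $P_1<P_2<Q_1<Q_2$. Composing with $\beta$ and using that $\beta$ is order-preserving on $Z$ and on $Z^c=\dom(\beta)\setminus Z$, one sees that $\gamma$ is order-preserving on each of the four blocks, while (OP2) for $\beta$ tells us that the $\beta$-images of $Z$ all lie above those of $Z^c$.

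The conceptual crux — and the step I expect to be the main obstacle — is the observation that $P_1$ and $Q_2$ cannot both be non-empty. Indeed, if $p\in P_1$ and $q\in Q_2$, then $p\alpha\in Z$ and $q\alpha\in Z^c$, so $p\alpha<q\alpha$ by (OP2) for $\beta$, whereas $p\in Y$ and $q\in\dom(\alpha)\setminus Y$ force $p\alpha\ge q\alpha$ by (OP2) for $\alpha$, a contradiction. Hence at most one of these blocks survives, and I would split into the case $Q_2=\emptyset$ and the case $P_1=\emptyset$, which by symmetry are treated alike. In the first case the blocks realise, from bottom to top, the height pattern high--low--high, with a single descent occurring between $P_1$ and $P_2$; I would then verify that $W=P_1$ satisfies (OP1) and (OP2) for $\gamma$, the only non-routine inequality $P_1\gamma\ge Q_1\gamma$ following from $P_1\alpha\ge Q_1\alpha$ together with the monotonicity of $\beta$ on $Z$. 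In the symmetric case one takes $W=P_2\cup Q_1$. The various degenerate subcases (when $P$, $Q$, $P_1$, or the surviving block is empty, or when the two $\beta$-images glue in the sense of Proposition \ref{glued}) collapse to $\gamma$ being order-preserving on all of $D$, so that $W=D$ works. This exhausts the possibilities and shows $\gamma\in\POP(X)$. Finally, since $\T(X)$ is a submonoid of $\PT(X)$ containing the identity and $\MOP(X)=\POP(X)\cap\T(X)$, the second assertion follows at once.
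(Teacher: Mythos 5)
Your proof is correct and, despite the four-block packaging, is essentially the paper's own argument: your chosen ideals $P_1$ (when $Q_2=\emptyset$) and $P_2\cup Q_1$ (when $P_1=\emptyset$) coincide exactly with the paper's $V=Y\cap Z\alpha^{-1}\cap\dom(\alpha\beta)$ in its Case 2 and $W=(Y\cup Z\alpha^{-1})\cap\dom(\alpha\beta)$ in its Case 1, your key incompatibility of $P_1$ and $Q_2$ is the paper's Case 2 observation that $(\dom(\alpha)\setminus Y)\alpha\subseteq Z$, and the empty-ideal degenerations are dispatched the same way via $\alpha\beta\in\PO(X)$. One stray remark is off but harmless: the gluing situation of Proposition \ref{glued} does not in general make $\gamma$ order-preserving on all of $D$, yet it needs no separate treatment anyway, since every inequality in your two main cases is non-strict and covers it.
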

\begin{proof}
First, notice that the identity transformation on $X$ is order-preserving, 
whence it belongs to $\POP(X)$ and to $\OP(X)$. 
Since $\OP(X)=\POP(X)\cap\T(X)$, it suffices to show that $\POP(X)$ is closed under composition of mappings. 

Let $\alpha$ and $\beta$ be transformations of $\POP(X)$ with
ideals $Y$ and $Z$, respectively.

We will show that $\alpha\beta\in\POP(X)$, by considering two
(disjoint) cases. In each case, the construction of the ideal of $\alpha\beta$ will be carried out differently.

\smallskip 

\noindent\textsc{case 1.} $Y\alpha\cap\dom(\beta)\subseteq \dom(\beta)\setminus Z$.

Let $W=(Y\cup Z\alpha^{-1})\cap\dom(\alpha\beta)$. 

First, we suppose that $W=\emptyset$. In this case, 
we show that $\alpha\beta$ is order-preserving on
$\dom(\alpha\beta)$. 
Let $x,y\in \dom(\alpha\beta)$ be such that $x\le y$. 
Then $x,y\in \dom(\alpha)\setminus Y$ and $x\alpha,y\alpha\in \dom(\beta)\setminus Z$.
Hence, since $x\le y$, we have $x\alpha\le y\alpha$ and so
$x\alpha\beta\le y\alpha\beta$. 
Thus $\alpha\beta\in\PO(X)$ and so $\alpha\beta\in\POP(X)$. 

Secondly, we assume that $W\neq\emptyset$ and we will prove that $W$ as an ideal of $\alpha\beta$. 

We begin by showing that $\alpha\beta$ is order-preserving on $W$. Let
$x,y\in W$ be such that $x\le y$. If $y\in Y$ then $x\in Y$,
whence $x\alpha\le y\alpha$ and 
$x\alpha, y\alpha\in Y\alpha\cap\dom(\beta)\subseteq\dom(\beta)\setminus Z$, 
from which it follows that $x\alpha\beta\le y\alpha\beta$.
Now, suppose that $y\not\in Y$. If $x\not\in Y$ then $x\alpha\le
y\alpha$ and $x,y\in Z\alpha^{-1}$,
whence $x\alpha, y\alpha\in Z$ and so $x\alpha\beta\le
y\alpha\beta$. On the other hand, if $x\in Y$ then 
$x\alpha\in Y\alpha\cap\dom(\beta)\subseteq\dom(\beta)\setminus Z$ 
and, since $y\alpha\in Z$, it follows that
$x\alpha\beta\le y\alpha\beta$.

Next, we show that $\alpha\beta$ is order-preserving on
$\dom(\alpha\beta)\setminus W$. 
Let $x,y\in \dom(\alpha\beta)\setminus W$ be such that $x\le y$. 
Then $x,y\in \dom(\alpha)\setminus Y$ and $x\alpha,y\alpha\in \dom(\beta)\setminus Z$.
Hence, since $x\le y$, we have $x\alpha\le y\alpha$ and so
$x\alpha\beta\le y\alpha\beta$.

Now, we focus our attention on the condition (OP2). Let $x\in W$
and $y\in \dom(\alpha\beta)\setminus W$. Then, 
as above, we have 
$y\in \dom(\alpha)\setminus Y$ and $y\alpha\in \dom(\beta)\setminus Z$. 

If $x\in Y$ then $x\le y$ and $x\alpha\ge y\alpha$, since $y\in \dom(\alpha)\setminus Y$. 
Moreover, as $x\alpha\in Y\alpha\cap\dom(\beta)\subseteq \dom(\beta)\setminus Z$ 
and $y\alpha\in \dom(\beta)\setminus Z$, we
obtain $x\alpha\beta\ge y\alpha\beta$.

On the other hand, suppose that $x\not\in Y$. 
Then, $x\in Z\alpha^{-1}$ and so $x\alpha\in Z$. 
If $y\le x$ then $y\alpha\le x\alpha$ (since $x,y\in \dom(\alpha)\setminus Y$), 
which implies that $y\alpha\in Z$, a contradiction. 
Hence $x\le y$. Moreover, since $x\alpha\in Z$ and $y\alpha\in \dom(\beta)\setminus Z$, 
we also have $x\alpha\beta\ge y\alpha\beta$.

Therefore, we just proved that $\alpha\beta$ admits $W$ as an ideal and so we have   
$\alpha\beta\in\POP(X)$. 

\smallskip 

\noindent\textsc{case 2.} $Y\alpha\cap\dom(\beta)\not\subseteq \dom(\beta)\setminus Z$.

Notice that, there exists $a\in Y$ such that $a\alpha\in Z\cap\dom(\beta)$. 
Hence, if $x\in \dom(\alpha)\setminus Y$ then $x\alpha\le a\alpha$ and so $x\alpha\in Z$. 
Thus $(\dom(\alpha)\setminus Y)\alpha\subseteq Z$.

Let $V=Y\cap Z\alpha^{-1}\cap\dom(\alpha\beta)$. 

First, we suppose that $V=\emptyset$ and we will show that $\alpha\beta$ is order-preserving on
$\dom(\alpha\beta)$. 
Let $x,y\in \dom(\alpha\beta)$ be such that $x\le y$.
If $x\in \dom(\alpha)\setminus Y$ then $y\in \dom(\alpha)\setminus Y$ and so
$x\alpha,y\alpha\in(\dom(\alpha)\setminus Y)\alpha\subseteq Z$. 
It follows that $x\le y$ implies $x\alpha\le y\alpha$, 
which in turn implies that $x\alpha\beta\le y\alpha\beta$. 
So, suppose that $x\in Y$. 
If $y\in Y$ then, as $x,y\in \dom(\alpha\beta)$, 
we have $x,y\not\in Z\alpha^{-1}$ and so 
$x\alpha,y\alpha\in \dom(\beta)\setminus Z$. 
Hence, in this case, 
$x\le y$ implies $x\alpha\le y\alpha$,
which in turn implies that $x\alpha\beta\le y\alpha\beta$. 
On the other hand, if $y\in \dom(\alpha)\setminus Y$, 
then we have $x\alpha\in\dom(\beta)\setminus Z$ and $y\alpha\in(\dom(\alpha)\setminus Y)\alpha\subseteq Z$, 
from which follows again $x\alpha\beta\le y\alpha\beta$.
Thus $\alpha\beta\in\PO(X)$ and so $\alpha\beta\in\POP(X)$. 

Secondly, we assume that $V\neq\emptyset$ and we will prove that $V$ as an ideal of $\alpha\beta$. 

We begin by showing that $\alpha\beta$ is order-preserving on $V$.
Let $x,y\in V$ be such that $x\le y$. Then $x,y\in Y$ and
$x\alpha,y\alpha\in Z$. It follows that $x\le y$ implies
$x\alpha\le y\alpha$, which in turn implies that $x\alpha\beta\le
y\alpha\beta$.

We continue by showing that $\alpha\beta$ is order-preserving on
$\dom(\alpha\beta)\setminus V$. 
Let $x,y\in \dom(\alpha\beta)\setminus V$ be such that $x\le y$.
If $x\in \dom(\alpha)\setminus Y$ then $y\in \dom(\alpha)\setminus Y$ and so
$x\alpha,y\alpha\in(\dom(\alpha)\setminus Y)\alpha\subseteq Z$. 
It follows that $x\le y$ implies $x\alpha\le y\alpha$, 
which in turn implies that $x\alpha\beta\le y\alpha\beta$. 
So, suppose that $x\in Y$. 
If $y\in Y$ then, as $x,y\in \dom(\alpha\beta)\setminus V$, 
we have $x,y\not\in Z\alpha^{-1}$ and so 
$x\alpha,y\alpha\in \dom(\beta)\setminus Z$. 
Hence, in this case, 
$x\le y$ implies $x\alpha\le y\alpha$,
which in turn implies that $x\alpha\beta\le y\alpha\beta$. 
On the other hand, if $y\in \dom(\alpha)\setminus Y$, 
then we have $x\alpha\in\dom(\beta)\setminus Z$ and $y\alpha\in(\dom(\alpha)\setminus Y)\alpha\subseteq Z$, from which follows again $x\alpha\beta\le y\alpha\beta$.

Next, we prove that $\alpha\beta$ satisfies (OP2) with respect to $V$. 

Let $x\in V$ and $y\in \dom(\alpha\beta)\setminus V$. 
Then $x\in Y$ and $x\alpha\in Z$.

Suppose that $y\in Y$. Then, since $y\in\dom(\alpha\beta)\setminus V$, 
we have $y\not\in Z\alpha^{-1}$ and so $y\alpha\in \dom(\beta)\setminus Z$. 
Hence $x\alpha\beta\ge y\alpha\beta$. On the other hand, if $y\le x$ then $y\alpha\le
x\alpha$ and so $y\alpha\in Z$, a contradiction. Thus, we also
have $x\le y$.

Now, suppose that $y\in \dom(\alpha)\setminus Y$. 
Then $x\le y$ and $x\alpha\ge y\alpha$. Additionally,  
$y\alpha\in(\dom(\alpha)\setminus Y)\alpha\subseteq  Z$ and so 
$x\alpha\beta\ge y\alpha\beta$.

Therefore we proved that $\alpha\beta$ admits $V$ as an ideal,
whence $\alpha\beta\in\POP(X)$, as required.
\end{proof}

\medskip 

Now, let $\alpha\in\POP(X)$, let $Y$ be an ideal of $\alpha$ and $A$ be any subset of $\dom(\alpha)$.
Clearly, if $A\cap Y=\emptyset$ then the restriction of $\alpha$ to $A$ is order-preserving. 
On the other hand, if $A\cap Y\ne\emptyset$ then it is easy to show that $A\cap Y$ is an ideal of the restriction 
of $\alpha$ to $A$ and so this transformation is also orientation-preserving. In short: 

\begin{proposition}\label{rPOP}
Any restriction of an element of $\POP(X)$ also is an element of $\POP(X)$. 
\end{proposition}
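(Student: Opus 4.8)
The plan is to fix $\alpha\in\POP(X)$ with ideal $Y$, fix a subset $A\subseteq\dom(\alpha)$, and denote by $\beta$ the restriction of $\alpha$ to $A$, so that $\dom(\beta)=A$ and $x\beta=x\alpha$ for every $x\in A$. First I would dispose of the degenerate case $A=\emptyset$: then $\beta$ is the empty transformation, which belongs to $\POP(X)$ by definition. For $A\neq\emptyset$ I would split according to whether the ideal $Y$ meets $A$.

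In the case $A\cap Y=\emptyset$ we have $A\subseteq\dom(\alpha)\setminus Y$. Since (OP1) guarantees that $\alpha$ is order-preserving on $\dom(\alpha)\setminus Y$, its further restriction to $A$ is order-preserving; hence $\beta\in\PO(X)\subseteq\POP(X)$, admitting $\dom(\beta)$ as ideal.

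In the case $A\cap Y\neq\emptyset$ I would claim that $W:=A\cap Y$ is an ideal of $\beta$. Note that $W$ is a non-empty subset of $\dom(\beta)=A$ and that $\dom(\beta)\setminus W=A\setminus Y\subseteq\dom(\alpha)\setminus Y$. For (OP1): since $W\subseteq Y$ and $\alpha$ is order-preserving on $Y$, the map $\beta$ is order-preserving on $W$; and since $\dom(\beta)\setminus W\subseteq\dom(\alpha)\setminus Y$ and $\alpha$ is order-preserving on $\dom(\alpha)\setminus Y$, the map $\beta$ is order-preserving on $\dom(\beta)\setminus W$. For (OP2): given $a\in W\subseteq Y$ and $b\in\dom(\beta)\setminus W\subseteq\dom(\alpha)\setminus Y$, condition (OP2) for $\alpha$ yields $a\le b$ and $a\alpha\ge b\alpha$, that is, $a\beta\ge b\beta$. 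Thus $W$ witnesses $\beta\in\POP(X)$.

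The whole argument is essentially bookkeeping: every inequality required of $\beta$ is inherited verbatim from the corresponding property of $\alpha$, precisely because $W\subseteq Y$ and $\dom(\beta)\setminus W\subseteq\dom(\alpha)\setminus Y$. Consequently I do not expect a genuine obstacle; the only point demanding a moment's care is confirming the set identity $\dom(\beta)\setminus W=A\setminus Y$ together with the inclusion $A\setminus Y\subseteq\dom(\alpha)\setminus Y$, since it is exactly this that lets both the order-preservation on the complement and (OP2) transfer directly. I would therefore expect the proof to be short.
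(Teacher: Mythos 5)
Your proof is correct and follows exactly the paper's argument: the paper likewise observes that if $A\cap Y=\emptyset$ the restriction is order-preserving, while if $A\cap Y\neq\emptyset$ then $A\cap Y$ serves as an ideal of the restriction, with (OP1) and (OP2) inherited verbatim from $\alpha$. Your write-up merely makes explicit the routine verifications the paper leaves to the reader.
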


Another property of orientation-preserving transformations is the following: 

\begin{proposition}\label{iPOP}
Let $\alpha$ be a partial permutation of $X$. 
If $\alpha$ is orientation-preserving then its inverse function 
$\alpha^{-1}:\im(\alpha)\longrightarrow\dom(\alpha)$ also is orientation-preserving. 
In particular, if $\alpha$ is order-preserving then $\alpha^{-1}$ also is order-preserving. 
\end{proposition}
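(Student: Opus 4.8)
The plan is to dispose of the order-preserving claim first and then bootstrap the orientation-preserving case from it. I would begin by proving the final sentence: suppose $\alpha$ is order-preserving and injective, and take $u,v\in\im(\alpha)$ with $u\le v$. Since $u=(u\alpha^{-1})\alpha$ and $v=(v\alpha^{-1})\alpha$, if we had $u\alpha^{-1}>v\alpha^{-1}$ then order-preservation of $\alpha$ would force $u\ge v$, hence $u=v$; but then injectivity would give $u\alpha^{-1}=v\alpha^{-1}$, contradicting $u\alpha^{-1}>v\alpha^{-1}$. Thus $u\alpha^{-1}\le v\alpha^{-1}$, so $\alpha^{-1}$ is order-preserving.

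For the main statement I would clear the trivial situations: if $\alpha=\emptyset$ there is nothing to prove, and if $\alpha\in\PO(X)$ (equivalently, $\alpha$ admits $\dom(\alpha)$ as an ideal) then $\alpha^{-1}$ is order-preserving by the previous paragraph, hence orientation-preserving. So I may assume $\alpha$ has a non-empty proper ideal $Y$. The observation I expect to be the crux is that injectivity forces $Y\alpha$ and $(\dom(\alpha)\setminus Y)\alpha$ to be disjoint: were they to meet, Proposition \ref{glued} would supply a common value $m=a\alpha=b\alpha$ with $a\in Y$ and $b\in\dom(\alpha)\setminus Y$, which is impossible for an injective $\alpha$ since $a\ne b$. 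Consequently $\im(\alpha)$ is partitioned cleanly into the two blocks $Y\alpha$ and $(\dom(\alpha)\setminus Y)\alpha$.

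I would then propose $Y'=(\dom(\alpha)\setminus Y)\alpha$ as an ideal for $\alpha^{-1}$ and verify (OP1) and (OP2). Since $Y$ is non-empty and proper, $Y'$ is a non-empty proper subset of $\dom(\alpha^{-1})=\im(\alpha)$, and by the disjointness just noted $\dom(\alpha^{-1})\setminus Y'=Y\alpha$. For (OP1), the restriction $\alpha^{-1}|_{Y'}$ is exactly the inverse of the order-preserving injective map $\alpha|_{\dom(\alpha)\setminus Y}$, so it is order-preserving by the order-preserving case already established; symmetrically, $\alpha^{-1}|_{Y\alpha}$ is the inverse of $\alpha|_{Y}$ and is order-preserving. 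For (OP2), take $u\in Y'$ and $v\in Y\alpha$, say $u=b\alpha$ and $v=a\alpha$ with $b\in\dom(\alpha)\setminus Y$ and $a\in Y$; then (OP2) for $\alpha$ gives $a\le b$ and $a\alpha\ge b\alpha$, i.e. $u\le v$ and $u\alpha^{-1}=b\ge a=v\alpha^{-1}$, which is precisely (OP2) for $\alpha^{-1}$ relative to $Y'$. Hence $\alpha^{-1}\in\POP(X)$.

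The only genuine subtlety is the disjointness of the two image blocks, and it is settled at once by injectivity together with Proposition \ref{glued}; every other step is a direct transcription of (OP1) and (OP2) from $\alpha$ to $\alpha^{-1}$, with the roles of domain and image (and of "upper" and "lower" blocks) interchanged.
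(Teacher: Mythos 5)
Your proof is correct and follows essentially the same route as the paper's (which merely asserts the two cases as ``easy'' and ``routine''): the same split into the order-preserving case $Y=\dom(\alpha)$ versus a proper ideal, and the same candidate ideal $(\dom(\alpha)\setminus Y)\alpha$ for $\alpha^{-1}$, with the verifications of (OP1) and (OP2) filled in correctly. One tiny remark: the appeal to Proposition \ref{glued} is superfluous, since any common value of $Y\alpha$ and $(\dom(\alpha)\setminus Y)\alpha$ already has two distinct preimages, contradicting injectivity directly.
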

\begin{proof}
Let $Y$ be an ideal of $\alpha$. 
If $Y=\dom(\alpha)$ then $\alpha$ is order-preserving and it is easy to show that $\alpha^{-1}$ is also order-preserving. On the other hand, if $Y\subsetneq\dom(\alpha)$ then it is a routine matter to prove that 
$(\dom(\alpha)\setminus Y)\alpha$ is an ideal of $\alpha^{-1}$. 
\end{proof}

Denote by $\I(X)$ the submonoid of $\PT(X)$ of all partial permutations. This monoid is called the \textit{symmetric inverse monoid} on $X$. It follows from Proposition \ref{iPOP} that the submonoids $\POI(X)=\PO(X)\cap\I(X)$ and $\POPI(X)=\POP(X)\cap\I(X)$ of $\I(X)$ also are \textit{inverse submonoids} of $\I(X)$. 

\smallskip 

Let us also consider $\S(X)$ as being the \textit{symmetric group} on $X$, i.e. the group of all permutations of $X$. 
Hence, we have the following diagram with respect to the inclusion relation:
\begin{figure}[h] 
\centering \label{fig2}
\begin{picture}(130,115)(0,45)
%
%
{
\put(117.5,107.5){$\bullet$}
\put(124,107.5){\scriptsize $\T(X)$}
\put(67.5,157.5){$\bullet$}
\put(75,157.5){\scriptsize $\PT(X)$}
\put(17.5,107.5){$\bullet$}
\put(0,107.5){\scriptsize $\I(X)$}
\put(120,110){\line(-1,1){50}}
\put(120,90){\line(0,1){20}}
\put(70,140){\line(0,1){20}}
\put(20,110){\line(1,1){50}}
\put(20,90){\line(0,1){20}}
\put(67.5,57.5){$\bullet$}
\put(70,60){\line(1,1){50}}
\put(70,60){\line(-1,1){50}}
\put(70,60){\line(0,-1){20}}
\put(61,72){\scriptsize $\S(X)$}
}
%
%
\put(67.5,37.5){$\bullet$}
\put(70,40){\line(1,1){30}}
\put(70,40){\line(-1,1){30}}
\put(74,37.5){\scriptsize $\OO(X)\cap\S(X)$}
\put(37.5,67.5){$\bullet$}
\put(40,70){\line(1,1){30}}
\put(40,70){\line(-1,1){20}}
\put(5,69){\scriptsize $\POI(X)$}
\put(97.5,67.5){$\bullet$}
\put(100,70){\line(-1,1){30}}
\put(100,70){\line(1,1){20}}
\put(104,67.5){\scriptsize $\OO(X)$}
\put(17.5,87.5){$\bullet$}
\put(20,90){\line(1,1){50}}
\put(-17,87.5){\scriptsize $\POPI(X)$}
\put(117.5,87.5){$\bullet$}
\put(120,90){\line(-1,1){50}}
\put(124,87.5){\scriptsize $\OP(X)$}
\put(67.5,97.5){$\bullet$}
\put(70,100){\line(0,1){40}}
\put(73.5,98){\scriptsize $\PO(X)$}
\put(67.5,137.5){$\bullet$}
\put(93,138){\scriptsize $\POP(X)$}
\end{picture}
\end{figure}

\section{Regularity} \label{reg} 

The following criterion for the regularity of the elements of $\OO(X)$ was proved in 2010 by Mora and Kemprasit. 

\begin{theorem}[{\cite[Theorem 2.4]{Mora&Kemprasit:2010}}] \label{regO}
Let $X$ be a chain and let $\alpha\in\OO(X)$. Then $\alpha$ is a regular element of $\OO(X)$ if and only if the following conditions hold:
\begin{enumerate}
\item If $\im(\alpha)$ has an upper bound in $X$, then $\max\im(\alpha)$ exists; 
\item If $\im(\alpha)$ has a lower bound in $X$, then $\min\im(\alpha)$ exists;
\item If $x\in X\setminus\im(\alpha)$ is neither an upper bound nor a lower bound of $\im(\alpha)$, 
then either $\max\{t\in\im(\alpha)\mid t<x\}$ or $\min\{t\in\im(\alpha)\mid t>x\}$ exists. 
\end{enumerate}
\end{theorem}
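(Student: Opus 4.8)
The plan is to recast regularity of $\alpha$ as the existence of an order-preserving full transformation $\beta\in\MO(X)$ that restricts to a \emph{section} of $\alpha$ over its image, and then to read conditions (1)--(3) as precisely the obstructions to constructing such a $\beta$. Indeed, if $\alpha=\alpha\beta\alpha$ then, writing $y=x\alpha$, one gets $y\beta\alpha=x\alpha\beta\alpha=x\alpha=y$, so $\beta\alpha$ fixes $\im(\alpha)$ pointwise; conversely, any $\beta\in\MO(X)$ with $y\beta\alpha=y$ for all $y\in\im(\alpha)$ witnesses regularity.

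For the forward implication I would assume $\alpha=\alpha\beta\alpha$ with $\beta\in\MO(X)$ and exploit the monotonicity of $\beta$ followed by $\alpha$. If $u$ is an upper bound of $\im(\alpha)$, then $y\le u$ gives $y=y\beta\alpha\le u\beta\alpha$ for every $y\in\im(\alpha)$, so the element $u\beta\alpha\in\im(\alpha)$ is an upper bound of $\im(\alpha)$, i.e. $\max\im(\alpha)=u\beta\alpha$; this is (1), and (2) is dual. For (3), given $x\in X\setminus\im(\alpha)$ that is neither an upper nor a lower bound, I set $z=x\beta\alpha\in\im(\alpha)$; since $z\ne x$, either $z<x$ or $z>x$, and in the first case $t<x$ forces $t=t\beta\alpha\le x\beta\alpha=z$ for all $t\in\im(\alpha)$, so $z=\max\{t\in\im(\alpha)\mid t<x\}$, while the second case gives $z=\min\{t\in\im(\alpha)\mid t>x\}$ dually.

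For the converse I would first build the section. Because $\alpha$ is order-preserving, its kernel classes $\alpha^{-1}(y)$ ($y\in\im(\alpha)$) form an increasing family of convex sets, so choosing one representative in each class yields a strictly increasing map $\beta_0\colon\im(\alpha)\to X$ with $\beta_0\alpha=\mathrm{id}_{\im(\alpha)}$. I then extend $\beta_0$ to all of $X$ by putting $L_x=\{t\in\im(\alpha)\mid t\le x\}$, $U_x=\{t\in\im(\alpha)\mid t\ge x\}$ and
\[
x\beta=\begin{cases}(\max L_x)\beta_0 & \text{if }\max L_x\text{ exists},\\ (\min U_x)\beta_0 & \text{otherwise.}\end{cases}
\]
Here conditions (1)--(3) are exactly what make $\beta$ well defined and $X$-valued: since $X$ is a chain, $L_x\cup U_x=\im(\alpha)\neq\emptyset$, so for $x\in\im(\alpha)$ one has $\max L_x=x$, for $x$ an upper (resp. lower) bound condition (1) (resp. (2)) supplies $\max L_x=\max\im(\alpha)$ (resp. $\min U_x=\min\im(\alpha)$), and for an interior $x$ condition (3) supplies at least one of $\max L_x$, $\min U_x$. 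As $\beta$ agrees with $\beta_0$ on $\im(\alpha)$, the section property gives $\alpha\beta\alpha=\alpha$.

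The main obstacle is checking that $\beta$ is order-preserving, hence lies in $\MO(X)$. Writing $w(x)$ for the element of $\im(\alpha)$ used in the displayed definition, it suffices to show $w(x_1)\le w(x_2)$ whenever $x_1\le x_2$, after which the monotonicity of $\beta_0$ finishes the argument. Using $L_{x_1}\subseteq L_{x_2}$ and $U_{x_2}\subseteq U_{x_1}$, the cases in which $x_1,x_2$ use the same clause, or in which $x_1$ uses the $\max L$ clause, are immediate. The delicate case is $w(x_1)=\min U_{x_1}$ (so $\max L_{x_1}$ does not exist) together with $w(x_2)=\max L_{x_2}$: here I would argue by contradiction that $\min U_{x_1}>x_2$ would force $\im(\alpha)\cap[x_1,x_2]=\emptyset$ and hence $L_{x_1}=L_{x_2}$, so that $\max L_{x_2}$ exists if and only if $\max L_{x_1}$ does, contradicting the clause assignment; therefore $\min U_{x_1}\le x_2$, giving $w(x_1)=\min U_{x_1}\in L_{x_2}$ and $w(x_1)\le\max L_{x_2}=w(x_2)$. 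This single mixed case, resting on the chain structure, is where the argument really has to be pinned down.
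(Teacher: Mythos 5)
Your proof is correct. Note first that the paper does not actually prove Theorem \ref{regO}: it is quoted verbatim from Mora and Kemprasit \cite{Mora&Kemprasit:2010}, so there is no internal proof to compare against line by line. Both directions of your argument check out. The forward direction (using that $\beta\alpha$ fixes $\im(\alpha)$ pointwise, so $u\beta\alpha$ realizes $\max\im(\alpha)$ for an upper bound $u$, and $x\beta\alpha$ realizes the relevant one-sided extremum for interior $x$) is the standard argument and is sound. In the converse, your observation that any choice of representatives $z_y\in y\alpha^{-1}$ is automatically strictly increasing is right, since $z_1\ge z_2$ with $y_1<y_2$ would contradict monotonicity of $\alpha$; your case analysis for totality of $\beta$ correctly invokes (1) when $U_x=\emptyset$, (2) when $L_x=\emptyset$, and (3) otherwise; and your treatment of the one delicate monotonicity case ($w(x_1)=\min U_{x_1}$, $w(x_2)=\max L_{x_2}$) is exactly where the work lies and is handled correctly: $\min U_{x_1}>x_2$ forces $\im(\alpha)\cap[x_1,x_2]=\emptyset$, hence $L_{x_1}=L_{x_2}$, contradicting the clause assignment.

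It is worth pointing out that your construction is essentially the specialization, to order-preserving $\alpha$, of the machinery the paper builds in Section \ref{proof} to prove its own Theorem \ref{regOP}: the paper's $\beta$ there is defined by the same three clauses (images go to $z_x$; bound points go to $z_{\max\im(\alpha)}$ or $z_{\min\im(\alpha)}$; interior non-image points go to $z_{\max\{t\in\im(\alpha)\mid t<x\}}$ or $z_{\min\{t\in\im(\alpha)\mid t>x\}}$), and the cases 1.1--1.3 of Lemma \ref{e} handle precisely the $\alpha\in\MO(X)$ situation you treat. The one structural difference is that in $\MO(X)$ your stronger hypotheses (1) and (2) let you always prefer $\max L_x$ when it exists, whereas the paper's weaker condition 1 for $\MOP(X)$ (``max \emph{or} min exists'') forces its $\beta$ to wrap around an ideal, which is why the paper needs the long case analysis of Lemmas \ref{x}--\ref{e} and you do not. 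So your argument is a clean, self-contained proof of the quoted result, consistent in spirit with how the paper extends it.
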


Based on this theorem,  Mora and Kemprasit \cite{Mora&Kemprasit:2010} deduced several previous known results. 
For instance, 
that $\OO(\Z)$ is regular while $\OO(\Q)$ and $\OO(\R)$ are not regular, 
being $\Z$, $\Q$ and $\R$ the sets of integers, rational numbers and real numbers, respectively,
with their usual orders. 
See also \cite{Adams&Gould:1989,Aizenstat:1968,Kemprasit&Changphas:2000,Kim&Kozhukhov:2008}. 

\medskip 

The example below shows that $\OP(X)$ may have more regular order-preserving elements than
$\OO(X)$. 

\begin{example}\em
Consider the set $\R$ of real numbers 
equipped with the usual order.
As observed above, $\OO(\R)$ is not regular.

Let $\alpha:\R\longrightarrow\R$ be the mapping defined by
$$
x\alpha=\left\{
\begin{array}{ll}
 -1/x & x\ge1\\
 -1 & x<1~,
\end{array}
\right.
$$
for $x\in\R$. Then, $\alpha\in\OO(\R)$ and $\im(\alpha)=[-1,0[$ and so, 
by Theorem \ref{regO}, $\alpha$ is not regular in $\OO(\R)$.

Now, let $\beta:\R\longrightarrow\R$ be the mapping defined by
$$
x\beta=\left\{
\begin{array}{ll}
 -1/x & -1\le x<0\\
 1 & \mbox{otherwise},
\end{array}
\right.
$$
for $x\in\R$. Then $\beta\in\OP(\R)$, with ideal
$Y=\,]-\infty,0[$. Moreover, we have $\alpha=\alpha\beta\alpha$
and $\beta=\beta\alpha\beta$. Thus $\alpha$ is regular in $\OP(\R)$.
\end{example}

The following property gives us a necessary condition for an order-preserving transformation of $X$ to admit a non order-preserving inverse in $\OP(X)$. As an application, we show that $\OP(\R)$ is not regular. 

\begin{proposition}\label{bound}
If $\alpha\in\OO(X)$ admits an inverse
$\beta\in\OP(X)\setminus\OO(X)$ then $\im(\alpha)$ is bounded.
\end{proposition}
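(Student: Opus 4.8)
The plan is to exploit the inverse relations $\alpha=\alpha\beta\alpha$ and $\beta=\beta\alpha\beta$ to pin down the behaviour of $\beta$ on $\im(\alpha)$, and then to read off the bounds of $\im(\alpha)$ from the ``rotation'' structure of $\beta$. Write $I=\im(\alpha)$; if $I$ is a singleton it is trivially bounded, so I assume $|I|\ge2$. From $\alpha=\alpha\beta\alpha$ I would first deduce that $u\beta\alpha=u$ for every $u\in I$, i.e.\ $\beta\alpha$ restricts to the identity on $I$, and from $\beta=\beta\alpha\beta$ that $I\beta=\im(\beta)$. A short computation then shows that $\alpha$ restricts to a bijection $I\beta\to I$ whose inverse is $\beta|_I$. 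Since $\alpha$ is order-preserving and $X$ is a chain, this bijection is an order-isomorphism, and hence $\beta|_I\colon I\to\im(\beta)$ is an order-isomorphism as well; in particular $\beta|_I$ is order-preserving.

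Next I would use that $\beta\in\MOP(X)\setminus\MO(X)$ is non-constant, so it has a unique proper non-empty ideal $Z$, with $Z$ an order ideal lying below the order filter $X\setminus Z$, with $\beta$ order-preserving on each part and $Z\beta$ lying above $(X\setminus Z)\beta$, both parts being non-empty. The key structural step is a dichotomy: $I\subseteq Z$ or $I\subseteq X\setminus Z$. Indeed, were $I\cap Z$ a non-empty proper subset of $I$, it would (by the remark preceding Proposition \ref{rPOP}) be an ideal of $\beta|_I$, contradicting the fact that the order-preserving non-constant transformation $\beta|_I$ has $I$ itself as its unique ideal.

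In the case $I\subseteq Z$, any element of the non-empty set $X\setminus Z$ is an upper bound of $I$ by (OP2). For a lower bound, note that $I\subseteq Z$ forces $\im(\beta)=I\beta\subseteq Z\beta$, whence $(X\setminus Z)\beta\subseteq Z\beta$; by Proposition \ref{glued} this common part is a single point $m=\min(Z\beta)=\max((X\setminus Z)\beta)$, so in fact $\im(\beta)=Z\beta$ has minimum $m$. Transporting through the order-isomorphism $\beta|_I$, I would conclude that $I$ has a minimum, so $I$ is bounded. The case $I\subseteq X\setminus Z$ is dual: an element of $Z$ bounds $I$ below, and the same reasoning produces $\max(I)$.

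I expect the main obstacle to be the first paragraph, namely establishing that $\beta|_I$ is genuinely order-preserving. This rests on the fact, special to chains, that an order-preserving bijection between totally ordered sets is automatically an order-isomorphism, and one must be careful to verify that $\alpha$ restricted to $I\beta$ really is a bijection onto $I$ (injectivity coming from $\beta\alpha|_I=\mathrm{id}$) before invoking it. Once $\beta|_I$ is known to be order-preserving, the dichotomy and the two symmetric cases are routine given Propositions \ref{glued} and \ref{rPOP}.
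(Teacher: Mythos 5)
Your proof is correct, and it reaches the paper's conclusion by a genuinely different mechanism. The paper fixes an ideal $Y$ of $\beta$ and splits into the cases $(X\setminus Y)\beta\alpha\cap Y\neq\emptyset$ and $(X\setminus Y)\beta\alpha\subseteq X\setminus Y$; in each case it argues elementwise --- showing, for instance, that $\beta$ is constant with value $m$ on $\,]-\infty,a]\cup(X\setminus Y)$ --- and traps $\im(\beta\alpha)$ (which equals $\im(\alpha)$, since $\alpha=\alpha\beta\alpha$) inside an explicit bounded interval, $[a,b[$ or $\,]y,m\alpha]$. You instead extract structure first: from $\alpha=\alpha\beta\alpha$ and $\beta=\beta\alpha\beta$ you get that $\beta|_I\colon I\to\im(\beta)$ and $\alpha|_{I\beta}$ are mutually inverse bijections (with $I=\im(\alpha)$ and $I\beta=\im(\beta)$), and you use the chain-specific fact that an order-preserving bijection between chains is an order-isomorphism to conclude $\beta|_I$ is order-preserving. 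This lets you obtain the dichotomy $I\subseteq Z$ or $I\subseteq X\setminus Z$ abstractly, from the uniqueness of the ideal of the non-constant order-preserving map $\beta|_I$ combined with the restriction remark preceding Proposition \ref{rPOP}, rather than by direct computation; your dichotomy is in fact equivalent to the paper's case split, since its first case amounts to $\im(\alpha)\subseteq Y$ (note $(X\setminus Y)\beta\alpha$ is a non-empty subset of $\im(\beta\alpha)=\im(\alpha)$). From there the two proofs converge: one bound comes from (OP2), the other from Proposition \ref{glued}, which you transport through the order-isomorphism to get $\min I$ (respectively $\max I$) --- the same extremal elements ($a=m\alpha$, resp.\ $m\alpha$) that appear as interval endpoints in the paper's proof, though the paper does not emphasize that they are attained. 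What your route buys is less elementwise bookkeeping, a transparent use of the inverse-pair structure, and the slightly sharper explicit conclusion that $\im(\alpha)$ possesses a minimum or a maximum; the paper's route is more self-contained, needing neither the non-constancy/injectivity discussion nor the order-isomorphism fact. The steps you flagged as delicate (bijectivity of $\alpha|_{I\beta}$, with injectivity from $\beta\alpha|_I=\mathrm{id}$, and order-preservation of $\beta|_I$) do check out, as do your applications of Propositions \ref{glued} and \ref{rPOP}.
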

\begin{proof}
First, observe that $\im(\alpha)=\im(\alpha\beta\alpha)\subseteq\im(\beta\alpha)\subseteq\im(\alpha)$, whence 
$\im(\alpha)=\im(\beta\alpha)$. 

\smallskip 

Let $Y$ be an ideal of $\beta$. Then $Y$ is a nonempty proper
subset of $X$, since $\beta\in\OP(X)\setminus\OO(X)$.

We begin by supposing that $(X\setminus Y)\beta\alpha\cap
Y\neq\emptyset$. Then, there exist $a\in Y$ and $b\in X\setminus
Y$ such that $b\beta\alpha=a$. Hence,
$b\beta=b\beta\alpha\beta=a\beta$ and so $m=b\beta=a\beta\in
(X\setminus Y)\beta\cap Y\beta$. By Proposition \ref{glued}, it
follows that $(X\setminus Y)\beta\cap Y\beta=\{m\}$ and
$\max((X\setminus Y)\beta)=m=\min(Y\beta)$. Thus, $\beta$ is
constant in $]-\infty,a]\cup(X\setminus Y)$ with value $m$. In
fact, given $y\in ]-\infty,a]$, we have $y\beta\le a\beta=m$,
whence $y\beta=m$. On the other hand, for $x\in X\setminus Y$, we
have $x\beta\le m$ and so $x\beta\alpha\le
m\alpha=b\beta\alpha=a$. Hence, $x\beta=(x\beta\alpha)\beta=m$. In
addition, since $\beta$ is order-preserving on $Y$, it follow that
$\im(\beta)\subseteq[m,+\infty[$, whence $\im(\beta\alpha)\subseteq
[m\alpha,+\infty[=[a,+\infty[$. If
$x\beta\alpha\in X\setminus Y$, for some $x\in X$, then
$m=(x\beta\alpha)\beta=x\beta$, whence $a=m\alpha=x\beta\alpha\in
X\setminus Y$, which is a contradiction. Therefore
$\im(\beta\alpha)\subseteq Y$ and thus $\im(\beta\alpha)\subseteq
Y\cap[a,+\infty[$. Now, as $Y$ is upper bounded by any element of
$X\setminus Y$, we have $\im(\beta\alpha)\subseteq [a,b[$.

Next, we admit the other case, i.e. $(X\setminus
Y)\beta\alpha\subseteq X\setminus Y$. Then, we also have
$Y\beta\alpha\subseteq X\setminus Y$. In fact, being $y\in Y$ and
$x\in X\setminus Y$, we have $x\beta\le y\beta$ and so
$x\beta\alpha\le y\beta\alpha$. Since $x\beta\alpha\in X\setminus
Y$ and $X\setminus Y$ is an order filter of $X$, then
$y\beta\alpha\in X\setminus Y$, as required. Thus
$\im(\beta\alpha)\subseteq X\setminus Y$. It follows that
$X\beta=(X\beta\alpha)\beta\subseteq (X\setminus Y)\beta$, whence
$Y\beta\cap(X\setminus Y)\beta=Y\beta\cap
X\beta=Y\beta\neq\emptyset$ and so, by Proposition \ref{glued},
there exists $m\in X$ such that $Y\beta=\{m\}$, with
$\max(X\beta)=\max((X\setminus Y)\beta)=m\,(=\min(Y\beta))$. Then
$\im(\beta)\subseteq \,]-\infty, m]$ and so
$\im(\beta\alpha)\subseteq \,]-\infty, m\alpha]$. Now, as
$X\setminus Y$ is lower bounded by any element of $Y$, we have
$\im(\beta\alpha)\subseteq \,]y,m\alpha]$, for any $y\in Y$, which
finishes the proof.
\end{proof}

\begin{example}\em 
Let $\R$ be the set of real numbers equipped with the usual order.
Then $\OP(\R)$ is not a regular semigroup. Furthermore,
it contains non regular order-preserving transformations. 

In fact, let $\alpha:\R\longrightarrow\R$ be the mapping defined by 
$x\alpha=-e^{-x}, $ for $x\in\R$. Then $\alpha\in\OO(\R)$ and
$\im(\alpha)=\,]-\infty,0[$. 
Hence, by Theorem \ref{regO}, 
$\alpha$ is not regular in $\OO(\R)$
and, by Proposition \ref{bound}, $\alpha$ is not regular in $\OP(\R)$ too.
\end{example}

\medskip 

Observe that, as $\POI(X)$ and $\POPI(X)$ are inverse monoids then, in particular, they are regular monoids. 
Next, we show that this is also the case of $\PO(X)$ and $\POP(X)$. 

\smallskip 

Let $\alpha\in\POP(X)\setminus\{\emptyset\}$. Let $Y$ be an ideal of $\alpha$. 
For each $x\in Y\alpha$, choose $z_x\in x\alpha^{-1}\cap Y$. 
For each $x\in\im(\alpha)\setminus(Y\alpha)$, choose $z_x\in x\alpha^{-1}$ (observe that, in this case $z_x\in \dom(\alpha)\setminus Y$). 
Let $D=\{z_x\mid x\in\im(\alpha)\}$. Then 
\begin{equation}\label{zetax}
\begin{array}{rrclcrrcl}
\zeta: & \im(\alpha) &\longrightarrow & D & \quad\text{and}\quad & \alpha|_D: & D & \longrightarrow & \im(\alpha) \\ 
          & x & \longmapsto & z_x && & x & \longmapsto & x\alpha 
\end{array}
\end{equation}
are mutually inverse bijections. By Proposition \ref{rPOP}, $\alpha|_D\in\POP(X)$. In addition, $\alpha|_D$ is a partial permutation of $X$ and so, by Proposition \ref{iPOP}, $\zeta\in\POP(X)$. More precisely, $\alpha|_D,\zeta\in\POPI(X)$. 
Moreover, if $\alpha\in\PO(X)$ then $\alpha|_D,\zeta\in\POI(X)$. 

Let $x\in\dom(\alpha)$. Then $x\alpha\zeta\alpha=(x\alpha)\zeta\alpha=z_{x\alpha}\alpha=x\alpha$. 
Hence $\alpha=\alpha\zeta\alpha$. This proves the following result. 

\begin{theorem}\label{regPOP}
$\PO(X)$ and $\POP(X)$ are regular monoids. 
\end{theorem}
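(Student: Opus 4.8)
The plan is to prove that every element of $\POP(X)$ (and hence of its submonoid $\PO(X)$) is regular by exhibiting, for each $\alpha\in\POP(X)$, an explicit element $\zeta$ satisfying $\alpha=\alpha\zeta\alpha$. The empty transformation is trivially regular, so I would fix $\alpha\in\POP(X)\setminus\{\emptyset\}$ and let $Y$ be an ideal of $\alpha$.

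First I would construct a \emph{transversal} $D$ of the fibres of $\alpha$ that respects the ideal structure. Concretely, for each image point $x\in Y\alpha$ I would pick a preimage $z_x$ lying inside $Y$, and for each $x\in\im(\alpha)\setminus Y\alpha$ I would pick any preimage $z_x$, noting that such a $z_x$ necessarily lands in $\dom(\alpha)\setminus Y$. Setting $D=\{z_x\mid x\in\im(\alpha)\}$, the restriction $\alpha|_D\colon D\to\im(\alpha)$ is a bijection, with inverse $\zeta\colon\im(\alpha)\to D$, $x\mapsto z_x$.

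The key step is to verify that both $\alpha|_D$ and $\zeta$ lie in $\POP(X)$. For $\alpha|_D$ this is immediate from Proposition~\ref{rPOP}, since any restriction of an orientation-preserving map is again orientation-preserving. Since $\alpha|_D$ is moreover injective, hence a partial permutation, Proposition~\ref{iPOP} applies and gives that its inverse $\zeta$ is orientation-preserving as well; in particular $\zeta,\alpha|_D\in\POPI(X)$, and if $\alpha\in\PO(X)$ then they lie in $\POI(X)$. The point of choosing preimages \emph{inside} $Y$ for the images in $Y\alpha$ is exactly to make $D$ inherit a compatible ideal, which is what makes the restriction argument clean rather than requiring a bare-hands check of (OP1)--(OP2) for $\zeta$.

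Finally I would check the regularity identity pointwise. For any $x\in\dom(\alpha)$, writing $x\alpha\in\im(\alpha)$, one has $x\alpha\zeta=z_{x\alpha}$, and by construction $z_{x\alpha}\alpha=x\alpha$; hence $x(\alpha\zeta\alpha)=x\alpha$, so $\alpha=\alpha\zeta\alpha$. This establishes that $\alpha$ is regular in $\POP(X)$, and since $\PO(X)\subseteq\POP(X)$ with the constructed witness $\zeta$ lying in $\POI(X)\subseteq\PO(X)$ when $\alpha\in\PO(X)$, the same computation shows $\PO(X)$ is regular too. The only genuine obstacle is the verification that $\zeta$ is orientation-preserving; I expect this to dissolve entirely once the transversal $D$ is chosen to respect $Y$, since then it reduces to the already-proved Propositions~\ref{rPOP} and~\ref{iPOP} rather than a direct argument.
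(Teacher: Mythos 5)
Your proposal is correct and follows essentially the same route as the paper's own proof: the same transversal $D$ (with preimages chosen inside the ideal $Y$ for points of $Y\alpha$), the same reduction via Propositions \ref{rPOP} and \ref{iPOP} to conclude $\zeta\in\POPI(X)$ (and $\zeta\in\POI(X)$ when $\alpha\in\PO(X)$), and the same pointwise verification that $\alpha=\alpha\zeta\alpha$. The only slight overstatement is that the $Y$-respecting choice of transversal is needed to make the restriction argument work --- Proposition \ref{rPOP} applies to any restriction, so any transversal would do --- but this does not affect correctness and matches the paper's construction anyway.
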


The above construction of $\zeta$ will also play a very important role in proving next theorem. 

\smallskip 

In the case of full orientation-preserving transformations, the situation is much more complex. 
In fact, we have the following criteria for the regularity of the elements of $\OP(X)$: 

\begin{theorem}\label{regOP}
Let $X$ be a chain and let $\alpha\in\OP(X)$. Then $\alpha$ is a regular element of $\OP(X)$ if and only if the following conditions are satisfied: 
\begin{enumerate}
\item If $\im(\alpha)$ has an upper bound or a lower bound in $X$, then $\max\im(\alpha)$ exists or $\min\im(\alpha)$ exists;
\item If $x\in X\setminus\im(\alpha)$ is neither an upper bound nor a lower bound of $\im(\alpha)$, 
then either $\max\{t\in\im(\alpha)\mid t<x\}$ or $\min\{t\in\im(\alpha)\mid t>x\}$ exists. 
\end{enumerate}
\end{theorem}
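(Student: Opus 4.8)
The plan is to recast regularity as an extension problem. For $\beta\in\MOP(X)$ the identity $\alpha\beta\alpha=\alpha$ says $x\alpha\beta\alpha=x\alpha$ for all $x\in X$, i.e. $t\beta\alpha=t$ for every $t\in\im(\alpha)$; in other words $\beta$ witnesses the regularity of $\alpha$ exactly when its restriction to $\im(\alpha)$ is a section of $\alpha$. Fix an ideal $Y$ of $\alpha$ and let $\zeta$ be the orientation-preserving partial transformation built from $Y$ in Section~\ref{reg} (with $\dom(\zeta)=\im(\alpha)$, $\zeta\in\POPI(X)$ and $\alpha\zeta\alpha=\alpha$): it is precisely such a section. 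Hence $\alpha$ is regular in $\MOP(X)$ if and only if $\zeta$ can be extended to a \emph{full} orientation-preserving transformation of $X$, and the goal is to show that conditions (1)--(2) are exactly the obstruction to this extension.

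For necessity, suppose $\alpha=\alpha\beta\alpha$ with $\beta\in\MOP(X)$ and set $e=\beta\alpha$. Then $e$ is an idempotent of $\MOP(X)$ with $\im(e)=\im(\alpha)$ which fixes $\im(\alpha)$ pointwise, so I would run $e$ through Proposition~\ref{idps}. If $e\in\MO(X)$, then $e$ is an order-preserving idempotent retraction of $X$ onto $\im(\alpha)$, hence a regular element of $\MO(X)$; applying Theorem~\ref{regO} to $e$ and using $\im(e)=\im(\alpha)$ yields the full Mora--Kemprasit conditions, which contain both (1) and (2). In the remaining two cases $\min\im(\alpha)$ (respectively $\max\im(\alpha)$) exists, so (1) holds at once, while the restriction of $e$ to its ideal (respectively its complement) is an order-preserving idempotent onto $\im(\alpha)$; as that ideal is downward closed (respectively the complement upward closed), every hole $x$ lies in it, and then $xe\in\im(\alpha)$ realises $\max\{t\in\im(\alpha)\mid t<x\}$ or $\min\{t\in\im(\alpha)\mid t>x\}$, giving (2).

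For sufficiency I would assume (1)--(2) and extend $\zeta$ to a full $\beta$ as follows. On a hole $x$, condition (2) supplies $p=\max\{t\in\im(\alpha)\mid t<x\}$ or $q=\min\{t\in\im(\alpha)\mid t>x\}$, and I put $x\beta=p\zeta$ or $x\beta=q\zeta$. On the strict upper bounds of $\im(\alpha)$ (present only when $\im(\alpha)$ is upper bounded) I set $x\beta=(\max\im(\alpha))\zeta$ if $\max\im(\alpha)$ exists, and otherwise---by (1), $\min\im(\alpha)$ then exists---I \emph{wrap} and set $x\beta=(\min\im(\alpha))\zeta$; strict lower bounds are treated dually. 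Since $\beta|_{\im(\alpha)}=\zeta$ stays a section, $\alpha\beta\alpha=\alpha$ is immediate, and everything reduces to checking that $\beta\in\MOP(X)$.

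The hard part, to which Section~\ref{proof} is devoted, is exactly this last verification: showing that $\beta$ has at most one order-reversal across all subcases ($\alpha\in\MO(X)$ or not; $\im(\alpha)$ bounded above and/or below; the extrema present or absent). The delicate point is that, by Proposition~\ref{glued}, when $\alpha\notin\MO(X)$ the section $\zeta$ already carries a descent at the boundary between $(\dom(\alpha)\setminus Y)\alpha$ and $Y\alpha$, so one must ensure the wrap does not create a second one. The observation that settles this is that a wrapped point is sent to $(\min\im(\alpha))\zeta\in X\setminus Y$ or $(\max\im(\alpha))\zeta\in Y$, which lies in the part of $X$ opposite to the half into which $\zeta$ maps the adjacent block of $\im(\alpha)$; since in a chain every element of $X\setminus Y$ lies above every element of $Y$, the wrap attaches as an ascent rather than a descent, leaving the single cut of $\beta$ as its only order-reversal. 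Identifying the ideal of $\beta$ with the block up to that cut, the checks (OP1)--(OP2) then reduce to routine bookkeeping.
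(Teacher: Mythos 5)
Your overall architecture matches the paper's: necessity via the idempotent $e=\beta\alpha$, Proposition~\ref{idps} and Theorem~\ref{regO}; sufficiency by extending the section $\zeta$ of Section~\ref{reg} to a full transformation via the max-below/min-above rule on holes and a constant value on bounds, which is exactly the $\beta$ of Section~\ref{proof}. Where you genuinely diverge --- and improve on the paper --- is the necessity of condition~2 when $e\not\in\MO(X)$: the paper builds auxiliary order-preserving idempotents $\gamma$ and $\lambda$ (replacing $e$ by a constant on $X\setminus Y$, respectively on $Y$), applies Theorem~\ref{regO} to them, and then carefully transfers the conclusion back to $\im(\alpha)$ past the added image point; you instead note that every hole $x$ lies in the ideal (respectively its complement), on which $e$ is an order-preserving idempotent fixing $\im(\alpha)$ pointwise, so that $xe\in\im(\alpha)$ itself realises $\max\{t\in\im(\alpha)\mid t<x\}$ when $xe<x$ and $\min\{t\in\im(\alpha)\mid t>x\}$ when $xe>x$. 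That retraction argument is correct and noticeably shorter (it would even dispense with Theorem~\ref{regO} in the order-preserving case). Your dual treatment of lower bounds (to $z_{\min\im(\alpha)}$ when the minimum exists, wrapping otherwise) differs from the paper's uniform rule, which sends \emph{all} bounds to $z_{\max\im(\alpha)}$ if the maximum exists and to $z_{\min\im(\alpha)}$ otherwise; your variant also works, by the inequalities collected in Lemma~\ref{aa}, and is if anything more symmetric.

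Two caveats, both buried in what you call routine bookkeeping. First, your hole rule ``$x\beta=p\zeta$ or $x\beta=q\zeta$'' must uniformly prefer $p=\max\{t\in\im(\alpha)\mid t<x\}$ whenever it exists: if at one hole you use $p$ and at a larger hole in the same block you use $q$ although its max-below exists, you get a pattern $z_p,z_q,z_p,z_q$ on four increasing points, which has two cyclic reversals, so no subset can satisfy (OP1)--(OP2) and $\beta\not\in\MOP(X)$ by Proposition~\ref{rPOP}. The paper's Lemma~\ref{x} relies on exactly this: the min-above value is used only when the max-below does not exist. Second, ``identifying the ideal of $\beta$ with the block up to that cut'' hides the genuinely delicate point: holes lying strictly between $(X\setminus Y)\alpha$ and $Y\alpha$ (the sets $A$ and $B$ of Lemmas~\ref{b} and~\ref{bb}) are sent to one side or the other of the cut according to whether the lower block of the image has a maximum, so the location of the ideal of $\beta$ is itself case-dependent --- this is what forces the fourfold split in Lemma~\ref{d} and the split into Cases 1.1--2.3 in Lemma~\ref{e}. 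With the uniform preference made explicit and the gap sets handled, your plan compiles into the paper's proof; as written it is a correct blueprint whose sufficiency half stops one (long but mechanical) verification short of a proof.
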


These criteria are similar to that of full order-preserving transformations (Mora and Kemprasit's Theorem quoted above). However, its proof, 
that we will present in Section \ref{proof}, is much more involved. 
The greater complexity stems significantly from the fact that, in the condition 1 of our result, lower and upper bounds are simultaneously related to minimums and maximums, while in the corresponding conditions 1 and 2 of Mora and Kemprasit's Theorem, lower bounds only relate to minimums and upper bounds with maximums. 

\smallskip 

As an application of Theorem \ref{regOP}, we have the following two examples.  

\begin{example}\em
Consider the set of integers $\Z$ with the usual order. 
Since any non-empty subset of $\Z$ with upper bounds has a maximum and any non-empty subset of $\Z$ with lower bounds has a minimum, 
by Theorem \ref{regOP}, it is easy to deduce that $\OP(\Z)$ is a regular semigroup. 
\end{example} 

\begin{example}\em
Let $\Q$ be the set of rational numbers
equipped with its usual order. 

Let $\alpha:\Q\longrightarrow\Q$ be the mapping defined by
$$
x\alpha=\left\{
\begin{array}{ll}
 1-\frac{1}{x+1} & x\ge0\\
-1-\frac{1}{x-1}  & x<0~,
\end{array}
\right.
$$
for $x\in\Q$. Then, $\alpha\in\OP(\Q)$ (notice that, in fact, we also have that $\alpha\in\OO(\Q)$) and $\im(\alpha)=]-1,1[$. 
Hence, by Theorem \ref{regOP}, $\alpha$ is not regular
in $\OP(\Q)$.

Therefore, the semigroup $\OP(\Q)$ is not regular. 
\end{example}

\medskip 

In \cite{Fernandes&al:2014} Fernandes et al. showed that the product of any two regular elements of $\OO(X)$ is a regular element of $\OO(X)$. 
We finish this section leaving as an open problem the corresponding question for $\OP(X)$. 
\begin{question}
Is the set $\reg(\OP(X))$ of all regular elements of $\OP(X)$ a subsemigroup of $\OP(X)$? 
\end{question} 

\section{The proof of Theorem \ref{regOP}} \label{proof} 

This section consists of a series of properties. All together demonstrate Theorem \ref{regOP}. 

\medskip 

We start with the proof of the direct implication, which is achieved with our first two lemmas. 
With this in mind, let $\alpha$ be a regular element of $\OP(X)$ and let $\beta\in\OP(X)$ be such that $\alpha=\alpha\beta\alpha$. 
Then $\beta\alpha$ is an idempotent of $\OP(X)$ and so, by Proposition \ref{idps}, $\beta\alpha\in\OO(X)$ 
or $\min\im(\beta\alpha)$ exists or $\max\im(\beta\alpha)$ exists. 
If $\beta\alpha\in\OO(X)$ and  $\im(\beta\alpha)$ has an upper bound or a lower bound in $X$ then, 
since $\beta\alpha$ is a regular element of $\OO(X)$, by Theorem \ref{regO}, 
$\max\im(\beta\alpha)$ exists or $\min\im(\beta\alpha)$ exists. 
On the other hand, $\im(\alpha)=\im(\alpha\beta\alpha)\subseteq\im(\beta\alpha)\subseteq\im(\alpha)$ 
and so $\im(\alpha)=\im(\beta\alpha)$. 
Therefore, it follows immediately: 

\begin{lemma} 
Under the above conditions, 
if $\im(\alpha)$ has an upper bound or a lower bound in $X$, then $\max\im(\alpha)$ exists or $\min\im(\alpha)$ exists. 
\end{lemma}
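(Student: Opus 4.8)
The plan is to read the conclusion straight off the case analysis that the surrounding text has already set up, since all the substantive work is carried by Proposition \ref{idps} and Theorem \ref{regO}. Because we have established that $\im(\alpha)=\im(\beta\alpha)$, it suffices to prove the statement with $\im(\beta\alpha)$ in place of $\im(\alpha)$ and then transfer the conclusion back to $\alpha$.

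First I would apply Proposition \ref{idps} to the idempotent $\beta\alpha$, obtaining its three mutually exhaustive cases. In case (2) the proposition guarantees outright that $\min\im(\beta\alpha)$ exists, and in case (3) that $\max\im(\beta\alpha)$ exists; in either situation the disjunctive conclusion ``$\max\im(\beta\alpha)$ exists or $\min\im(\beta\alpha)$ exists'' already holds, with no appeal to the hypothesis on bounds.

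The remaining case is (1), where $\beta\alpha\in\MO(X)$. Here I would note that, being idempotent, $\beta\alpha$ is a regular element of $\MO(X)$, so Theorem \ref{regO} is available. Under the hypothesis that $\im(\beta\alpha)$ admits an upper bound or a lower bound in $X$, conditions (1) and (2) of Theorem \ref{regO} then force $\max\im(\beta\alpha)$ or $\min\im(\beta\alpha)$ to exist. Combining the three cases and replacing $\im(\beta\alpha)$ by $\im(\alpha)$ yields the claim.

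I do not anticipate a genuine obstacle: the statement is a clean consolidation of the two cited criteria. The only points that deserve a moment of care are, first, recognising that an idempotent of $\MO(X)$ is automatically a regular element there, so that Theorem \ref{regO} legitimately applies in case (1), and, second, observing that the target is a disjunction weak enough to be satisfied unconditionally in cases (2) and (3), so that the bound hypothesis is only ever needed to dispose of case (1).
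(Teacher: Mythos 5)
Your proposal is correct and follows essentially the same route as the paper: the paper's argument is exactly the paragraph preceding the lemma, which applies Proposition \ref{idps} to the idempotent $\beta\alpha$ (whose cases 2 and 3 give $\min\im(\beta\alpha)$ or $\max\im(\beta\alpha)$ unconditionally), invokes Theorem \ref{regO} in the case $\beta\alpha\in\MO(X)$ via the regularity of the idempotent $\beta\alpha$, and transfers the conclusion through $\im(\alpha)=\im(\alpha\beta\alpha)\subseteq\im(\beta\alpha)\subseteq\im(\alpha)$. Your two points of care (idempotents are regular in $\MO(X)$; the bound hypothesis is only needed in the order-preserving case) match the paper's reasoning exactly.
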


Our next lemma completes the proof of the direct implication of Theorem \ref{regOP}. 
 
\begin{lemma}
Under the above conditions, 
if $x\in X\setminus\im(\alpha)$ is neither an upper bound nor a lower bound of $\im(\alpha)$, 
then either $\max\{t\in\im(\alpha)\mid t<x\}$ or $\min\{t\in\im(\alpha)\mid t>x\}$ exists. 
\end{lemma}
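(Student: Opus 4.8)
The plan is to exploit the idempotent $\gamma=\beta\alpha\in\MOP(X)$ already at hand. Since $\gamma$ is idempotent it fixes $\im(\gamma)$ pointwise, and we have recorded above that $\im(\gamma)=\im(\alpha)$. Let $Y$ be an ideal of $\gamma$. As in the opening argument of the proof of Proposition~\ref{idps}, idempotency forces $\im(\alpha)\subseteq Y$ or $\im(\alpha)\subseteq X\setminus Y$ (and if $\gamma\in\MO(X)$ one may simply take $Y=X$); write $P$ for whichever of $Y$ and $X\setminus Y$ contains $\im(\alpha)$. Then $\gamma$ is order-preserving on $P$, we have $\im(\alpha)\subseteq P$, and $\gamma$ fixes every point of $\im(\alpha)$.

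The first step is to locate $x$, and I claim $x\in P$. Indeed, suppose $\im(\alpha)\subseteq Y$ but $x\in X\setminus Y$. Since $X\setminus Y$ is an order filter and $\im(\alpha)\subseteq Y$, no $t\in\im(\alpha)$ can satisfy $t\ge x$; as $X$ is a chain this means $t<x$ for every $t\in\im(\alpha)$, making $x$ an upper bound of $\im(\alpha)$, contrary to hypothesis. The symmetric argument, using that $Y$ is an order ideal, rules out $\im(\alpha)\subseteq X\setminus Y$ with $x\in Y$, since it would make $x$ a lower bound. Hence $x\in P$.

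The main step is then short. Put $L=\{t\in\im(\alpha)\mid t<x\}$ and $U=\{t\in\im(\alpha)\mid t>x\}$, both non-empty because $x$ is neither a lower nor an upper bound of $\im(\alpha)$. Consider $x\gamma\in\im(\alpha)$. As $\gamma$ is order-preserving on $P$, as $x\in P$, and as $\gamma$ fixes $\im(\alpha)$ pointwise, every $t\in L$ gives $t=t\gamma\le x\gamma$ while every $t\in U$ gives $x\gamma\le t\gamma=t$; thus $x\gamma$ is an upper bound of $L$ and a lower bound of $U$. Finally $x\gamma\neq x$ (since $x\notin\im(\alpha)$ whereas $x\gamma\in\im(\alpha)$), so in the chain $X$ either $x\gamma<x$, whence $x\gamma\in L$ and $x\gamma=\max L$, or $x\gamma>x$, whence $x\gamma\in U$ and $x\gamma=\min U$. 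In either case the required maximum or minimum exists.

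The only genuinely delicate point is the localisation of $x$ carried out in the second paragraph: when $\gamma\notin\MO(X)$ the map $\gamma$ is order-preserving only on part of $X$, so one must first be sure that $x$ falls in that order-preserving part before comparing $t\gamma$ with $x\gamma$. Everything else is a direct consequence of idempotency together with the fact that $X$ is totally ordered. Alternatively, in the case $\gamma\in\MO(X)$ one could simply quote condition~3 of Theorem~\ref{regO} applied to the regular idempotent $\gamma$, but the argument above has the advantage of covering all cases uniformly.
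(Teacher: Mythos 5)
Your proof is correct, and it takes a genuinely different route from the paper's. The paper also starts from the idempotent $\beta\alpha$ and the dichotomy of Proposition~\ref{idps}, but then it works indirectly: in the case $\beta\alpha\in\MO(X)$ it invokes the Mora--Kemprasit criterion (Theorem~\ref{regO}) applied to the regular idempotent $\beta\alpha$, and in the two non-order-preserving cases it builds an auxiliary order-preserving idempotent by flattening $\beta\alpha$ to a constant on the complementary part of the ideal (the maps $\gamma$ and $\lambda$ with $\im(\gamma)=\im(\beta\alpha)\cup\{z\}$, $\im(\lambda)=\im(\beta\alpha)\cup\{y\}$), applies Theorem~\ref{regO} to that auxiliary map, and then transfers the resulting maximum or minimum back to $\im(\alpha)$, checking that the adjoined point does not interfere. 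You instead argue directly: after localising $x$ in the part $P$ of the decomposition that contains $\im(\alpha)$ --- and this localisation step, which you correctly flag as the delicate point, is exactly where the hypotheses that $x$ is neither an upper nor a lower bound and that $Y$ is an order ideal (so $X\setminus Y$ is an order filter) are used --- you exploit that an idempotent fixes its image pointwise, so that $x(\beta\alpha)\in\im(\alpha)$ is simultaneously an upper bound of $L=\{t\in\im(\alpha)\mid t<x\}$ and a lower bound of $U=\{t\in\im(\alpha)\mid t>x\}$, and since $x(\beta\alpha)\neq x$ in a chain it must land in $L$ or in $U$, yielding $\max L$ or $\min U$ outright. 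Your argument is shorter, uniform across all cases (it does not even need Theorem~\ref{regO} in the $\MO(X)$ case, whereas the paper leans on it there), and it is self-contained: in effect you re-derive the relevant instance of condition~3 of Theorem~\ref{regO} for idempotents rather than quoting it. What the paper's approach buys in exchange is reuse of the known criterion as a black box and a construction (the flattened idempotents) that fits the bookkeeping style of the rest of Section~\ref{proof}; what yours buys is economy and transparency.
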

\begin{proof}
Let $x\in X\setminus\im(\alpha)$ be such that $x$ is neither an upper bound nor a lower bound of $\im(\alpha)$. 
As $\im(\alpha)=\im(\beta\alpha)$, then $x\in X\setminus\im(\beta\alpha)$ and 
$x$ is neither an upper bound nor a lower bound of $\im(\beta\alpha)$. 

If $\beta\alpha\in\OO(X)$ then, since $\beta\alpha$ is a regular element of $\OO(X)$, by Theorem \ref{regO}, 
either $\max\{t\in\im(\beta\alpha)\mid t<x\}$ or $\min\{t\in\im(\beta\alpha)\mid t>x\}$ exists, i.e. 
either $\max\{t\in\im(\alpha)\mid t<x\}$ or $\min\{t\in\im(\alpha)\mid t>x\}$ exists. 

Thus, suppose that $\beta\alpha\not\in\OO(X)$ and let $Y$ be the ideal of $\beta\alpha$. 
Notice that $Y$ is a proper subset of $X$. 
Then, 
as $\beta\alpha$ is an idempotent of $\OP(X)$,  
by Proposition \ref{idps}, we have two possible cases. 

\smallskip 

\noindent\textsc{case 1.} $\im(\beta\alpha)=Y\beta\alpha\subseteq Y$. 

Fix $z\in X\setminus Y$ (notice that $X\setminus Y\ne\emptyset$) and 
define a transformation $\gamma$ of $X$ by $x\gamma=x\beta\alpha$, 
if $x\in Y$, and $x\gamma=z$, if $x\in X\setminus Y$.  
Clearly, $\gamma\in\OO(X)$ and $\gamma$ is an idempotent. In particular, $\gamma$ is a regular element of $\OO(X)$. 
Moreover, $\im(\gamma)=Y\beta\alpha\cup\{z\}=\im(\beta\alpha)\cup\{z\}$. 

As $\im(\beta\alpha)\subseteq\im(\gamma)$ and 
$x$ is neither an upper bound nor a lower bound of $\im(\beta\alpha)$, 
then we also have that $x$ is neither an upper bound nor a lower bound of $\im(\gamma)$. 
On the other hand, 
since $z$ is an upper bound of $\im(\gamma)$ (notice that $\im(\beta\alpha)\subseteq Y$ and $z\in X\setminus Y$), 
then $x<z$, whence as also $x\in X\setminus\im(\beta\alpha)$, we obtain $x\in X\setminus\im(\gamma)$. 
Thus, by Theorem \ref{regO}, either $\max\{t\in\im(\gamma)\mid t<x\}$ or $\min\{t\in\im(\gamma)\mid t>x\}$ exists. 

Since $\{t\in\im(\alpha)\mid t<x\}=\{t\in\im(\beta\alpha)\mid t<x\}=\{t\in\im(\gamma)\mid t<x\}$, 
if $\max\{t\in\im(\gamma)\mid t<x\}$ exists then $\max\{t\in\im(\alpha)\mid t<x\}$ also exists. 

On the other hand, since $x<z$, then  
$\{t\in\im(\gamma)\mid t>x\}=\{t\in\im(\beta\alpha)\mid t>x\}\cup\{z\}$. Hence, as $t<z$, for all $t\in\im(\beta\alpha)$, 
if $\min\{t\in\im(\gamma)\mid t>x\}$ exists then $\min\{t\in\im(\beta\alpha)\mid t>x\}=\min\{t\in\im(\alpha)\mid t>x\}$ also exists 
(and coincide with each other). 

Thus, in this case, we showed that either $\max\{t\in\im(\alpha)\mid t<x\}$ or $\min\{t\in\im(\alpha)\mid t>x\}$ exists. 

\smallskip 

\noindent\textsc{case 2.} $\im(\beta\alpha)=(X\setminus Y)\beta\alpha\subseteq X\setminus Y$.

In this case we may proceed similarly to \textsc{case 1}, essentially by switching the roles of $X\setminus Y$ and $Y$. 
\end{proof} 

\medskip 

For the rest of this section, we will prove the converse implication of Theorem \ref{regOP}. 

\smallskip 

We begin by showing Lemmas \ref{a}, \ref{aaa} and \ref{aa}, which are general lemmas, i.e. we are not, for the time being, assuming the conditions 1 and 2 of the theorem. 
However, as will be shown subsequently, these three lemmas are intrinsically related to these two conditions. 

\smallskip 

Thus, let $\alpha$ be any element of $\OP(X)$ and, throughout the rest of this section, let $Y$ be an ideal of $\alpha$. 

\begin{lemma}\label{a}
Let $x\in X$. 
\begin{enumerate}
\item Suppose there exists $b_1\in Y\alpha$ such that $b_1<x$.  
If $d=\max\{t\in\im(\alpha)\mid t<x\}$ or $d=\min\{t\in\im(\alpha)\mid t>x\}$, then $d\in Y\alpha$. 

\item Suppose there exists $b_2\in (X\setminus Y)\alpha$ such that $x<b_2$. 
If $c=\max\{t\in\im(\alpha)\mid t<x\}$ or $c=\min\{t\in\im(\alpha)\mid t>x\}$, then $c\in (X\setminus Y)\alpha$. 

\end{enumerate} 
\end{lemma}
\begin{proof} Recall that, by definition, $x_1\ge x_2$, for all  $x_1\in Y\alpha$ and $x_2\in (X\setminus Y)\alpha$.  
Hence, $b_1\in Y\alpha$, $d\in\im(\alpha)$ and $b_1\le d$ imply $d\in Y\alpha$ and, analogously,  
$b_2\in (X\setminus Y)\alpha$, $c\in\im(\alpha)$ and $c\le b_2$ imply $c\in (X\setminus Y)\alpha$. 

\smallskip 

1. First, suppose that $d=\max\{t\in\im(\alpha)\mid t<x\}$. 
Since $b_1<x$ and $b_1\in\im(\alpha)$, then $b_1\le d$. As $b_1\in Y\alpha$ and $d\in\im(\alpha)$, we get $d\in Y\alpha$. 

Secondly, let $d=\min\{t\in\im(\alpha)\mid t>x\}$. Then, 
we have $b_1<x<d$, whence $b_1<d$. As $d\in\im(\alpha)$ and  $b_1\in Y\alpha$, it follows that $d\in Y\alpha$. 

\smallskip 

2. Suppose that $c=\max\{t\in\im(\alpha)\mid t<x\}$. Then, we have $c<x<b_2$, whence $c<b_2$. 
As $c\in\im(\alpha)$ and $b_2\in (X\setminus Y)\alpha$, we obtain $c\in (X\setminus Y)\alpha$. 

Finally, let $c=\min\{t\in\im(\alpha)\mid t>x\}$.  
Since $b_2>x$ and $b_2\in\im(\alpha)$, then $c\le b_2$. As $b_2\in (X\setminus Y)\alpha$ and $c\in\im(\alpha)$, 
it follows that $c\in (X\setminus Y)\alpha$, as required. 
\end{proof}

\smallskip 

Next, recall the construction (\ref{zetax}) of $\zeta$ given in Section \ref{reg}. For each $x\in Y\alpha$, choose $z_x\in x\alpha^{-1}\cap Y$ and, 
for each $x\in\im(\alpha)\setminus(Y\alpha)$, choose $z_x\in x\alpha^{-1}$ (notice that $z_x\in X\setminus Y$). 
Then $\zeta:\im(\alpha)\longrightarrow X$, $x \longmapsto z_x$, is an injective mapping and $z_x\alpha=x$, for all $x\in\im(\alpha)$. 
Moreover, it is easy to show that $\zeta$ is order-preserving both on $Y\alpha$ and on $(X\setminus Y)\alpha\setminus(Y\alpha)$ and, 
for $x\in Y\alpha$ and $y\in (X\setminus Y)\alpha\setminus(Y\alpha)$, we have $x>y$ and $z_x<z_y$. In other words: 

\begin{lemma}\label{aaa}
The mapping $\zeta:\im(\alpha)\longrightarrow X$ is either order-preserving or $(X\setminus Y)\alpha\setminus(Y\alpha)$ is an ideal of $\zeta$. 
\end{lemma}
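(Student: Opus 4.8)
The plan is to read off the dichotomy from three order relations satisfied by the chosen points $z_x$, and then to recognise these relations as exactly the defining conditions (OP1) and (OP2) for $\zeta$ with ideal $(X\setminus Y)\alpha\setminus(Y\alpha)$. Write $A=Y\alpha$ and $B=(X\setminus Y)\alpha\setminus(Y\alpha)$, so that $\{A,B\}$ is a partition of $\im(\alpha)=\dom(\zeta)$; indeed $A\cup B=Y\alpha\cup(X\setminus Y)\alpha=\im(\alpha)$ and $A\cap B=\emptyset$ by the definition of $B$.

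First I would establish the three facts asserted in the paragraph preceding the statement. That $\zeta$ is order-preserving on $A$ follows from a contrapositive chain argument: if $x_1,x_2\in A$ with $x_1\le x_2$ but $z_{x_2}<z_{x_1}$, then, since $z_{x_1},z_{x_2}\in Y$ and $\alpha$ is order-preserving on $Y$, we would get $x_2=z_{x_2}\alpha\le z_{x_1}\alpha=x_1$, forcing $x_1=x_2$ and hence $z_{x_1}=z_{x_2}$, a contradiction; so $z_{x_1}\le z_{x_2}$. The same argument with $X\setminus Y$ in place of $Y$ shows $\zeta$ is order-preserving on $B$. Finally, for $x\in A$ and $y\in B$ one has $x\ge y$ (every element of $Y\alpha$ dominates every element of $(X\setminus Y)\alpha$) and $x\ne y$ since $y\notin A$, whence $x>y$; dually $z_x\in Y$ and $z_y\in X\setminus Y$ are distinct elements of a chain with $z_x\le z_y$ (as $Y$ is an order ideal), so $z_x<z_y$. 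Alternatively, these facts can be obtained structurally: $\zeta$ is the inverse of the partial permutation $\alpha|_D\in\POPI(X)$, so $\zeta\in\POP(X)$ by Proposition \ref{iPOP}, and tracing that proof identifies the relevant ideal.

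With the three facts in hand I would split into two cases according to whether $B$ is empty. If $B=\emptyset$ then $\dom(\zeta)=A$ and $\zeta$ is order-preserving on all of its domain, i.e.\ $\zeta$ is order-preserving. If $B\ne\emptyset$, I first note that $B$ is a proper subset of $\dom(\zeta)$: since $\im(\alpha)\ne\emptyset$, the ideal $Y$ of the full transformation $\alpha$ is non-empty, hence $A=Y\alpha\ne\emptyset$ and $B\subsetneq\im(\alpha)$. Then I verify the two conditions for $B$ to be an ideal of $\zeta$: condition (OP1) is precisely the order-preservation of $\zeta$ on $B$ and on $\dom(\zeta)\setminus B=A$, which are two of the established facts; condition (OP2) requires, for $a\in B$ and $b\in\dom(\zeta)\setminus B=A$, that $a\le b$ and $a\zeta\ge b\zeta$, and these are exactly the relations $b>a$ and $z_b<z_a$ supplied by the third fact. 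Hence $B$ is an ideal of $\zeta$ and $\zeta\in\POP(X)$.

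The argument is essentially bookkeeping, so no step is genuinely hard; the only point demanding care is the direction of the inequalities. The map $\zeta$ preserves order within each block $A$ and $B$ but reverses it across the blocks ($x>y$ yet $z_x<z_y$), and it is precisely this order reversal between the two blocks that makes $B$---rather than $A$---the ideal of $\zeta$. Matching this reversal against the sign conventions in (OP2) is the place where an error would most easily creep in.
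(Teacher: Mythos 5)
Your proof is correct and follows essentially the same route as the paper, which states no separate proof for this lemma but records exactly your three facts in the preceding paragraph ($\zeta$ is order-preserving on $Y\alpha$ and on $(X\setminus Y)\alpha\setminus(Y\alpha)$, and $x>y$ with $z_x<z_y$ for $x\in Y\alpha$, $y\in(X\setminus Y)\alpha\setminus(Y\alpha)$) and then reads the lemma off as a restatement of them. Your contrapositive arguments for the three facts and your explicit matching of the cross-block inequalities against (OP1) and (OP2), including the empty/non-empty case split for $(X\setminus Y)\alpha\setminus(Y\alpha)$, correctly fill in the details the paper labels as easy.
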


This property allows us to to routinely prove the following lemma: 

\begin{lemma}\label{aa}
\begin{enumerate}
\item Suppose that $\alpha\in\OO(X)$. 
\begin{enumerate} 
\item If $\im(\alpha)$ has a maximum then $z_x\le z_{\max\im(\alpha)}$, for all $x\in\im(\alpha)$;  
\item If $\im(\alpha)$ has a minimum then $z_{\min\im(\alpha)}\le z_x$, for all $x\in\im(\alpha)$. 
\end{enumerate}

\item Suppose that $\alpha\not\in\OO(X)$ and $Y\alpha\cap(X\setminus Y)\alpha=\emptyset$.  
\begin{enumerate} 
\item If $\im(\alpha)$ has a maximum then $\max\im(\alpha)=\max(Y\alpha)$ and 
$z_d\le z_{\max\im(\alpha)}<z_c$, for all $d\in Y\alpha$ and for all $c\in(X\setminus Y)\alpha$. 
\item If $\im(\alpha)$ has a minimum then $\min\im(\alpha)=\min((X\setminus Y)\alpha)$ and 
$z_d<z_{\min\im(\alpha)}\le z_c$, for all $d\in Y\alpha$ and for all $c\in(X\setminus Y)\alpha$. 
\end{enumerate}

\item Suppose that $Y\alpha\cap(X\setminus Y)\alpha=\{m\}$, for some $m\in X$.  
\begin{enumerate} 
\item If $\im(\alpha)$ has a maximum then $\max\im(\alpha)=\max(Y\alpha)$, 
$z_d\le z_{\max\im(\alpha)}$, for all $d\in Y\alpha$, and $z_{\max\im(\alpha)}<z_c$, for all $c\in(X\setminus Y)\alpha\setminus\{m\}$. 
\item If $\im(\alpha)$ has a minimum then $\min\im(\alpha)=\min((X\setminus Y)\alpha)$. 
In addition, 
\begin{enumerate} 
\item if $\min\im(\alpha)\neq m$ then $z_d<z_{\min\im(\alpha)}\le z_c$, for all $d\in Y\alpha$ and for all $c\in(X\setminus Y)\alpha\setminus\{m\}$; 
\item if $\min\im(\alpha)=m$ (i.e. $(X\setminus Y)\alpha=\{m\}$) then $z_{\min\im(\alpha)}\le z_x$, for all $x\in\im(\alpha)$. 
\end{enumerate} 
\end{enumerate}
\end{enumerate} 
\end{lemma}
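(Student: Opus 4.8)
The plan is to treat Lemma \ref{aa} exactly as the phrase preceding it suggests: as a direct corollary of Lemma \ref{aaa} together with the two structural facts recorded just before it, namely that $\zeta$ is order-preserving on $Y\alpha$ and on $(X\setminus Y)\alpha\setminus(Y\alpha)$, and that $x>y$ while $z_x<z_y$ whenever $x\in Y\alpha$ and $y\in(X\setminus Y)\alpha\setminus(Y\alpha)$. With these in hand, each item reduces to two routine subtasks: first locate the extremum of $\im(\alpha)$ among the two pieces $Y\alpha$ and $(X\setminus Y)\alpha$, and then read off the asserted inequalities by applying $\zeta$ on the piece that contains the extremum and invoking the cross-inequality on the other piece.

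First I would dispose of Case 1. If $\alpha\in\MO(X)$ then $(X\setminus Y)\alpha\setminus(Y\alpha)=\emptyset$ (either $X\setminus Y=\emptyset$ when $Y=\dom(\alpha)=X$, or $\alpha$ is constant and $\im(\alpha)$ is a singleton contained in $Y\alpha$), so by Lemma \ref{aaa} the map $\zeta$ is order-preserving on all of $\im(\alpha)$. Then (a) and (b) are immediate: applying the order-preserving $\zeta$ to $x\le\max\im(\alpha)$, respectively to $\min\im(\alpha)\le x$, yields the claimed inequalities.

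For Cases 2 and 3 the first task is to locate the extremum, and here I would use only the defining property of the ideal, namely that every element of $Y\alpha$ is $\ge$ every element of $(X\setminus Y)\alpha$. If $\im(\alpha)$ has a maximum $M$, then picking any $d\in Y\alpha$ (nonempty) gives $d\ge M$, forcing $d=M$ and hence $M\in Y\alpha$, so $\max\im(\alpha)=\max(Y\alpha)$; dually one gets $\min\im(\alpha)=\min((X\setminus Y)\alpha)$. In Case 2 the images are disjoint, so $(X\setminus Y)\alpha\setminus(Y\alpha)=(X\setminus Y)\alpha$: for $M=\max(Y\alpha)$ order-preservation of $\zeta$ on $Y\alpha$ gives $z_d\le z_M$ for $d\in Y\alpha$, and the cross-inequality gives $z_M<z_c$ for every $c\in(X\setminus Y)\alpha$, with the minimum handled symmetrically. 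In Case 3, where $Y\alpha\cap(X\setminus Y)\alpha=\{m\}$ with $m=\min(Y\alpha)=\max((X\setminus Y)\alpha)$ by Proposition \ref{glued}, the same reasoning applies, except that the cross-inequality is now available only for $c\in(X\setminus Y)\alpha\setminus\{m\}$, which is precisely why the statement excludes $m$ in item 3(a) and in item 3(b)(i).

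The one point I expect to require genuine care is the glued value $m$ in Case 3. Since $m\in Y\alpha$, its chosen preimage $z_m$ lies in $Y$, so $m$ is governed by the $Y\alpha$-comparisons rather than by the cross-inequality (which only concerns elements of $(X\setminus Y)\alpha\setminus(Y\alpha)$); keeping this bookkeeping straight is what dictates the split in 3(b). When $\min\im(\alpha)\ne m$, the minimum is a genuine element of $(X\setminus Y)\alpha\setminus(Y\alpha)$, and the cross-inequality together with order-preservation of $\zeta$ on that piece delivers the strict-then-weak chain $z_d<z_{\min\im(\alpha)}\le z_c$. When instead $(X\setminus Y)\alpha=\{m\}$, one has $\im(\alpha)=Y\alpha$, so $\zeta$ is order-preserving throughout and $z_{\min\im(\alpha)}\le z_x$ holds for all $x$ exactly as in Case 1. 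This completes the case analysis, and no further computation beyond these order-preservation and cross-inequality invocations is needed.
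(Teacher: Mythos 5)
Your proof is correct and takes essentially the same route the paper intends: the paper states Lemma \ref{aa} without proof, as an ``immediate consequence'' of Lemma \ref{aaa} together with the preceding observations that $\zeta$ is order-preserving on $Y\alpha$ and on $(X\setminus Y)\alpha\setminus(Y\alpha)$ with the cross-inequality $z_x<z_y$, plus Proposition \ref{glued} for the glued point $m$, and your case analysis (including the correct handling of $Y=X$ versus constant maps in case 1, the identification $(X\setminus Y)\alpha\setminus\{m\}=(X\setminus Y)\alpha\setminus(Y\alpha)$ in case 3, and the split in 3(b)) is exactly that verification. One microscopic wording slip: when locating $\max\im(\alpha)$ you assert $d\ge M$ for any $d\in Y\alpha$ outright, whereas this holds only under the (implicit) assumption $M\in(X\setminus Y)\alpha$ --- the case $M\in Y\alpha$ being trivial --- so the argument is sound but that sentence should be conditioned accordingly.
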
 

\medskip 

The three cases considered in the previous lemma will allow us to structure the rest of our proof. 

\medskip 

From now and until the rest of this section, let us suppose that $\alpha$ satisfies conditions 1 and 2 of Theorem \ref{regOP}, i.e. 
\begin{enumerate}
\item if $\im(\alpha)$ has an upper bound or a lower bound in $X$, then $\max\im(\alpha)$ exists or $\min\im(\alpha)$ exists;
\item if $x\in X\setminus\im(\alpha)$ is neither an upper bound nor a lower bound of $\im(\alpha)$, 
then either $\max\{t\in\im(\alpha)\mid t<x\}$ or $\min\{t\in\im(\alpha)\mid t>x\}$ exists. 
\end{enumerate} 
These conditions allow us to extend $\zeta$ to a full transformations on $X$, as follows. We define $\beta\in\T(X)$ by: 
\begin{enumerate}
\item If $x\in\im(\alpha)$, then $x\beta=z_x$; 

\item If $x\in X\setminus\im(\alpha)$ is an upper bound or a lower bound of $\im(\alpha)$, then 
$$
x\beta=\left\{
\begin{array}{ll}
z_{\max\im(\alpha)} & \mbox{if $\im(\alpha)$ has a maximum}\\ 
z_{\min\im(\alpha)} & \mbox{otherwise}
\end{array}\right. 
$$
(notice that if $\max\im(\alpha)$ does not exist then $\min\im(\alpha)$ has to exist, by condition 1 of Theorem \ref{regOP}); 

\item  If $x\in X\setminus\im(\alpha)$ is neither an upper bound nor a lower bound of $\im(\alpha)$, then 
$$
x\beta=\left\{
\begin{array}{ll}
z_{\max\{t\in\im(\alpha)\mid t<x\}} & \mbox{if $\{t\in\im(\alpha)\mid t<x\}$ has a maximum}\\ 
z_{\min\{t\in\im(\alpha)\mid t>x\}} & \mbox{otherwise}
\end{array}\right. 
$$
(notice that if $\max\{t\in\im(\alpha)\mid t<x\}$ does not exist then $\min\{t\in\im(\alpha)\mid t>x\}$ has to exist, by condition 2 of Theorem \ref{regOP}). 
\end{enumerate}

Let $x\in X$. Then $x\alpha\in\im(\alpha)$ and so $x(\alpha\beta\alpha)=((x\alpha)\beta)\alpha=z_{x\alpha}\alpha=x\alpha$. 
Thus, we showed:

\begin{lemma}\label{r}
$\alpha=\alpha\beta\alpha$. 
\end{lemma}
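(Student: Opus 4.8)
The plan is to verify the identity pointwise, showing that $x(\alpha\beta\alpha)=x\alpha$ for every $x\in X$. Since $\alpha\in\MOP(X)\subseteq\T(X)$ and $\beta$ is by construction a full transformation of $X$, both $\alpha$ and $\alpha\beta\alpha$ are full transformations on $X$; hence it will suffice to check that they agree at each point.

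The key observation I would use is that, for this particular identity, only the first clause in the definition of $\beta$ ever comes into play. Indeed, fixing $x\in X$, we have $x\alpha\in\im(\alpha)$, and whenever $\beta$ is applied to a point of $\im(\alpha)$ we fall into that first clause, obtaining $(x\alpha)\beta=z_{x\alpha}$. The more delicate second and third clauses, which describe the action of $\beta$ on $X\setminus\im(\alpha)$, play no role here; they will be needed only later, to guarantee that $\beta$ itself belongs to $\MOP(X)$.

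It then remains to invoke the defining property of the chosen preimages. By construction $z_u$ was selected inside $u\alpha^{-1}$ for each $u\in\im(\alpha)$, so that $z_u\alpha=u$; taking $u=x\alpha$ gives $z_{x\alpha}\alpha=x\alpha$. Combining the two steps yields
$$
x(\alpha\beta\alpha)=\bigl((x\alpha)\beta\bigr)\alpha=z_{x\alpha}\alpha=x\alpha,
$$
and since $x$ was arbitrary we conclude $\alpha\beta\alpha=\alpha$.

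I do not expect any genuine obstacle here: the argument is a one-line verification once one notices that the elaborate case analysis in the definition of $\beta$ is irrelevant for this statement, the middle factor $\beta$ being evaluated only on elements of $\im(\alpha)$.
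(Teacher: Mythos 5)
Your proof is correct and is essentially the paper's own argument: the paper likewise verifies the identity pointwise, noting that $x\alpha\in\im(\alpha)$ forces the first clause of the definition of $\beta$, so that $x(\alpha\beta\alpha)=z_{x\alpha}\alpha=x\alpha$. Your added remark that the second and third clauses matter only for showing $\beta\in\MOP(X)$ matches the paper's structure exactly.
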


\smallskip 

Our objective now is to prove that $\beta\in\OP(X)$, which, in view of the previous lemma, will complete the proof of the converse implication of Theorem \ref{regOP}. 
This goal will be accomplish by considering, in its turn, each of the cases of Lemma \ref{aa}. 

Observe that, it is a routine matter to show also that $\beta=\beta\alpha\beta$, 
whence $\beta$ is an inverse of $\alpha$ in $\T(X)$ (for the time being, but also in $\OP(X)$, at the end of this section). 

\smallskip 

Another important piece in this process is the notion that we present now. 

Let $C$ be a subset of $X$. We define the \textit{convex closure} of $C$ as being the set 
$$
\cl{C}=\bigcup_{x,y\in C, x\le y}[x,y]. 
$$
It is clear that $\cl{C}$ is a \textit{convex} subset of $X$ (in the sense that, for all $x,y\in\cl{C}$ and for all $z\in X$, $x\le z\le y$ implies $z\in\cl{C}$). 
This concept will help us to construct an ideal for $\beta$. 

\smallskip 

The following lemma will be used throughout various proofs that we will carry out below. 

\begin{lemma}\label{x}
\begin{enumerate}
\item If $x\in \cl{Y\alpha}$ then $x\beta=z_d\in Y$, for some $d\in Y\alpha$. 

\item The transformation $\beta$ is order-preserving on $\cl{Y\alpha}$. 
\end{enumerate}
\end{lemma}
\begin{proof}
1. Let $x\in \cl{Y\alpha}$. If $x\in Y\alpha$ then $x\beta=z_x\in Y$, by definition. Suppose that $x\not\in Y\alpha$. 
Then $b_1<x<b_2$, for some $b_1,b_2\in Y\alpha$, and so $x\in X\setminus\im(\alpha)$ (since $x>b_1$, with $b_1\in Y\alpha$) 
and $x$ is neither an upper bound nor a lower bound of $\im(\alpha)$.  Hence 
$x\beta=z_d$, with 
$$
d=\left\{
\begin{array}{ll}
\max\{t\in\im(\alpha)\mid t<x\} & \mbox{if $\{t\in\im(\alpha)\mid t<x\}$ has a maximum}\\ 
\min\{t\in\im(\alpha)\mid t>x\} & \mbox{otherwise.}
\end{array}\right. 
$$
As $b_1<x$ and $b_1\in Y\alpha$, then $d\in Y\alpha$, by Lemma \ref{a}, and so $z_d\in Y$.  

\smallskip 

2. Let $x,y\in\cl{Y\alpha}$ be such that $x<y$. 

If $x,y\in Y\alpha$, then $x\beta=z_x=x\zeta\le y\zeta=z_y=y\beta$, since $\zeta$ is order-preserving on $Y\alpha$ (we will use   this fact below several times without explicit mention). See Lemma \ref{aaa}. 

Next, suppose that $x\in Y\alpha$ and $y\not\in Y\alpha$. Then $x\beta=z_x$ and $y\beta=z_d$, 
with $d=\max\{t\in\im(\alpha)\mid t<y\}$ or $d=\min\{t\in\im(\alpha)\mid t>y\}$. 
If $d=\max\{t\in\im(\alpha)\mid t<y\}$, as $x<y$, then $x\le d$.  
On the other hand, if $d=\min\{t\in\im(\alpha)\mid t>y\}$ then $d>y>x$, whence $x<d$. 
Thus, in both cases, we have $x\beta=z_x=x\zeta\le d\zeta=z_d=y\beta$. 

Now, we consider the case $x\not\in Y\alpha$ and $y\in Y\alpha$. Then $y\beta=z_y$ and $x\beta=z_d$, 
with  $d=\max\{t\in\im(\alpha)\mid t<x\}$ or $d=\min\{t\in\im(\alpha)\mid t>x\}$. 
If $d=\max\{t\in\im(\alpha)\mid t<x\}$ then $d<x<y$, whence $d<y$. 
On the other hand, if $d=\min\{t\in\im(\alpha)\mid t>x\}$, as $y>x$, then $d\le y$. 
So, in both cases, we get $x\beta=z_d=d\zeta\le y\zeta=z_y=y\beta$. 

Finally, we suppose that $x,y\not\in Y\alpha$. 
Then $x\beta=z_c$, with $c=\max\{t\in\im(\alpha)\mid t<x\}$ or $c=\min\{t\in\im(\alpha)\mid t>x\}$, 
and $y\beta=z_d$, with  $d=\max\{t\in\im(\alpha)\mid t<y\}$ or $d=\min\{t\in\im(\alpha)\mid t>y\}$.  
Let us look at the four possible cases: 
\begin{itemize}
\item If $c=\max\{t\in\im(\alpha)\mid t<x\}$ and $d=\max\{t\in\im(\alpha)\mid t<y\}$ then $c<x<y$, whence $c<y$ and so $c\le d$; 

\item If $c=\max\{t\in\im(\alpha)\mid t<x\}$ and $d=\min\{t\in\im(\alpha)\mid t>y\}$ then $c<x<y<d$ and so $c<d$;  

\item Next, suppose that $c=\min\{t\in\im(\alpha)\mid t>x\}$ and $d=\max\{t\in\im(\alpha)\mid t<y\}$. 
Notice that, in this case, by definition of $\beta$, $\max\{t\in\im(\alpha)\mid t<x\}$ does not exist. 
If $d<x$ then there exists $e\in\im(\alpha)$ such that $d<e<x$, whence $e<y$ and so $e\le d$, 
which is a contradiction. Therefore, $d>x$ and so $c\le d$; 

\item If $c=\min\{t\in\im(\alpha)\mid t>x\}$ and $d=\min\{t\in\im(\alpha)\mid t>y\}$ then $d>y>x$, whence $d>x$ and so $c\le d$. 
\end{itemize}
Thus, in all cases, we have $x\beta=z_c=c\zeta\le d\zeta=z_d=y\beta$, as required. 
\end{proof} 

\medskip

As mentioned above, we will show that $\beta\in\OP(X)$ by considering each of the cases of Lemma \ref{aa}. 
Namely, 
\begin{enumerate}
\item $\alpha\in\OO(X)$, 
\item $\alpha\not\in\OO(X)$ and $Y\alpha\cap(X\setminus Y)\alpha=\emptyset$, and 
\item $Y\alpha\cap(X\setminus Y)\alpha=\{m\}$, for some $m\in X$. 
\end{enumerate}

Notice that, in this last case, we must always have $\alpha\not\in\OO(X)$. 
Moreover, this condition is equivalent to $Y\alpha\cap(X\setminus Y)\alpha\neq\emptyset$, by Proposition \ref{glued}. 
Therefore, the above three cases are all possible cases, which validates our strategy to prove the converse implication of Theorem \ref{regOP}. 

\medskip 

Define 
$$
\lb(\alpha)=\{x\in X\setminus\im(\alpha)\mid\mbox{$x$ is a lower bound of $\im(\alpha)$}\}
$$ 
and 
$$
\ub(\alpha)=\{x\in X\setminus\im(\alpha)\mid\mbox{$x$ is an upper bound of $\im(\alpha)$}\}.
$$

\smallskip 

We start by considering the case where $\alpha\in\OO(X)$. 

\begin{proposition}\label{e1} 
If $\alpha\in\OO(X)$ then $\beta\in\OP(X)$.
\end{proposition}
\begin{proof}
We consider three cases. 

\smallskip 

\noindent{\sc case 1.1}. $\lb(\alpha)\ne\emptyset$ and $\im(\alpha)$ has a maximum. 

Let $Z=\lb(\alpha)$. Then $X\setminus Z=\cl{\im(\alpha)}\cup\ub(\alpha)$. 

By definition, $\beta$ is constant on $Z$. In fact, $x\beta=z_{\max\im(\alpha)}$, for all $x\in\lb(\alpha)\cup\ub(\alpha)$. 
Since $\im(\alpha)$ has a maximum, then combining Lemmas \ref{aa} and \ref{x}, 
we deduce that $\beta$ is order-preserving on $\cl{\im(\alpha)}\cup\ub(\alpha)$. 
On the other hand, if $x\in Z$ and $y\in X\setminus Z$ then, clearly, $x<y$ and, by Lemmas \ref{aa} and \ref{x}, we have $x\beta\ge y\beta$. 
Thus, $Z$ is an ideal of $\beta$ and so $\beta\in\OP(X)$. 

\smallskip 

\noindent{\sc case 1.2}. $\lb(\alpha)\ne\emptyset$ and $\im(\alpha)$ does not have a maximum. 

Let $Z=\lb(\alpha)\cup\cl{\im(\alpha)}$. Then $X\setminus Z=\ub(\alpha)$. 

In this case, $\min\im(\alpha)$ must exist and we have $x\beta=z_{\min\im(\alpha)}$, for all $x\in\lb(\alpha)\cup\ub(\alpha)$. 
Then, by using Lemmas \ref{aa} and \ref{x}, it is easy to conclude that $Z$ is an ideal of $\beta$ and so $\beta\in\OP(X)$. 

\smallskip 

\noindent{\sc case 1.3}. $\lb(\alpha)=\emptyset$. 

If $\im(\alpha)$ has a maximum then, as in {\sc case 1.1}, by Lemmas \ref{aa} and \ref{x}, 
we obtain that $\beta$ is order-preserving on $\cl{\im(\alpha)}\cup\ub(\alpha)$, i.e. $\beta\in\OO(X)$ and so $\beta\in\OP(X)$. 
On the other hand, if $\im(\alpha)$ does not have a maximum, again by Lemmas \ref{aa} and \ref{x}, 
it is easy to conclude that $Z=\cl{\im(\alpha)}$ is an ideal of $\beta$ (notice that $\ub(\alpha)=\emptyset$ or $\im(\alpha)$ has a minimum) and so, 
also in this case, $\beta\in\OP(X)$, as required. 
\end{proof} 

\medskip 

Now, we move on for the two cases where $\alpha\not\in\OO(X)$. 
Before studying them separately, we present the following lemma, for the convenience of its proof. 

\begin{lemma}\label{c}
\begin{enumerate}
\item If $Y\alpha\cap(X\setminus Y)\alpha=\emptyset$, then $\beta$ is order-preserving on $W=\cl{(X\setminus Y)\alpha}$. 

\item If $Y\alpha\cap(X\setminus Y)\alpha=\{m\}$, for some $m\in X$,  then $\beta$ is order-preserving on $W=\cl{(X\setminus Y)\alpha\setminus\{m\}}$. 

\item[] Moreover, in both cases, if $x\in W$ then $x\beta=z_c\in X\setminus Y$, for some $c\in (X\setminus Y)\alpha\setminus (Y\alpha)$. 

\end{enumerate} 
\end{lemma}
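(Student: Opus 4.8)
The plan is to mirror the proof of Lemma \ref{x}, with the set $(X\setminus Y)\alpha\setminus(Y\alpha)$ and the truncated images $\{t\in\im(\alpha)\mid t<x\}$, $\{t\in\im(\alpha)\mid t>x\}$ now playing the roles that $Y\alpha$ played there. The first observation I would make is that in \emph{both} cases $W$ is the convex closure of one and the same set, namely $W=\cl{(X\setminus Y)\alpha\setminus(Y\alpha)}$: when $Y\alpha\cap(X\setminus Y)\alpha=\emptyset$ this is immediate, and when $Y\alpha\cap(X\setminus Y)\alpha=\{m\}$ we have $(X\setminus Y)\alpha\setminus\{m\}=(X\setminus Y)\alpha\setminus(Y\alpha)$ since $m$ is the only common value. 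The genuinely new bookkeeping compared with Lemma \ref{x} is this removal of $m$, and the key fact making it work is Proposition \ref{glued}, which gives $m=\max((X\setminus Y)\alpha)$ in the second case; this guarantees that discarding $m$ deletes exactly the elements of $(X\setminus Y)\alpha$ that also lie in $Y\alpha$ while keeping the convex closure under control.

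I would first dispose of the ``moreover'' clause, since it also supplies the normal form $x\beta=z_c$ needed for the order-preserving argument. Fix $x\in W$. If $x\in(X\setminus Y)\alpha\setminus(Y\alpha)$ then $x\beta=z_x$ with $z_x\in X\setminus Y$ by the choice rule, and I take $c=x$. Otherwise $x$ lies strictly between two elements $b_1<x<b_2$ of $(X\setminus Y)\alpha\setminus(Y\alpha)$ (in the $\{m\}$ case one notes $b_2<m$, so $x<m$); using that every element of $Y\alpha$ dominates every element of $(X\setminus Y)\alpha$, I argue $x\notin\im(\alpha)$ and that $x$ is neither an upper nor a lower bound of $\im(\alpha)$, so $\beta$ acts by its third branch, giving $x\beta=z_c$ with $c=\max\{t\in\im(\alpha)\mid t<x\}$ or $c=\min\{t\in\im(\alpha)\mid t>x\}$. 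Since $x<b_2$ with $b_2\in(X\setminus Y)\alpha$, parts 3 and 4 of Lemma \ref{a} force $c\in(X\setminus Y)\alpha$. In the first case this already yields $c\in(X\setminus Y)\alpha\setminus(Y\alpha)$; in the second, $c<b_2<m$ (respectively $c\le b_2<m$) gives $c\neq m$, so again $c\in(X\setminus Y)\alpha\setminus(Y\alpha)$ and $z_c\in X\setminus Y$.

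It then remains to show $\beta$ is order-preserving on $W$. Given $x,y\in W$ with $x<y$, the previous paragraph writes $x\beta=z_c$ and $y\beta=z_d$ with $c,d\in(X\setminus Y)\alpha\setminus(Y\alpha)$; since $\zeta$ is order-preserving on $(X\setminus Y)\alpha\setminus(Y\alpha)$ (recorded before Lemma \ref{aaa}), it suffices to check $c\le d$. I split into the four cases according to whether each of $x,y$ belongs to $(X\setminus Y)\alpha\setminus(Y\alpha)$ or not, exactly as in Lemma \ref{x}. The main obstacle is the single delicate sub-case $c=\min\{t\in\im(\alpha)\mid t>x\}$ together with $d=\max\{t\in\im(\alpha)\mid t<y\}$: here the definition of $\beta$ forces $\max\{t\in\im(\alpha)\mid t<x\}$ not to exist, and a short argument (if $d<x$, then the failure of a maximum produces $e\in\im(\alpha)$ with $d<e<x<y$, so $e\le d$, a contradiction) shows $d>x$, whence $c\le d$. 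This point is resolved verbatim as in the proof of Lemma \ref{x}; the remaining three sub-cases are routine comparisons. With $c\le d$ in every case, we conclude $x\beta\le y\beta$, which completes the proof.
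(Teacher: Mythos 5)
Your proof is correct and takes essentially the same route as the paper's: the same normal form $x\beta=z_c$ obtained via Lemma \ref{a}, the same exclusion of $m$ resting on $m=\max((X\setminus Y)\alpha)$ from Proposition \ref{glued}, and the same reduction of the order-preserving check to the case analysis of Lemma \ref{x} with $Y\alpha$ replaced by $(X\setminus Y)\alpha\setminus(Y\alpha)$. The only cosmetic differences are that you unify the two cases up front as $W=\cl{(X\setminus Y)\alpha\setminus(Y\alpha)}$ and derive $c\neq m$ directly from $c\le b_2<m$ instead of the paper's argument by contradiction, which changes nothing substantive.
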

\begin{proof}
First, notice that, in the case of 2, we have $\max((X\setminus Y)\alpha)=m=\min(Y\alpha)$ and so $x<m$, for all $x\in W$. 
In particular, $m\not\in W$. 

Let $x\in W$. 
If $x\in (X\setminus Y)\alpha$ then $x\not\in Y\alpha$ and so $x\beta=z_x\in X\setminus Y$, by definition. 
Suppose that $x\not\in(X\setminus Y)\alpha$. 
Then $b_1<x<b_2$, for some $b_1,b_2\in (X\setminus Y)\alpha$ (and $b_2<m$, in the case of 2), and so $x\in X\setminus\im(\alpha)$ 
(since $x<b_2$, with $b_2\in (X\setminus Y)\alpha$) 
and $x$ is neither an upper bound nor a lower bound of $\im(\alpha)$.  Hence 
$x\beta=z_c$, with 
$$
c=\left\{
\begin{array}{ll}
\max\{t\in\im(\alpha)\mid t<x\} & \mbox{if $\{t\in\im(\alpha)\mid t<x\}$ has a maximum}\\ 
\min\{t\in\im(\alpha)\mid t>x\} & \mbox{otherwise.}
\end{array}\right. 
$$
As $x<b_2$ and $b_2\in (X\setminus Y)\alpha$, then $c\in(X\setminus Y)\alpha$, by Lemma \ref{a}. In the case of 1, it is immediate that $c\not\in Y\alpha$. In the case of 2, if $c\in Y\alpha$ then $c=m$. In this case, if $m=\max\{t\in\im(\alpha)\mid t<x\}$ then $m<x<b_2<m$, which is a contradiction, and if $m=\min\{t\in\im(\alpha)\mid t>x\}$, as $b_2>x$, then $m<b_2$, which is again a contradiction. 
Hence, we also have $c\not\in Y\alpha$. Thus, in both cases, $c\in(X\setminus Y)\alpha\setminus(Y\alpha)$ and so $z_c\in X\setminus Y$.  

Let $x,y\in W$ be such that $x<y$. By using the fact that $\zeta$ is also order-preserving on $(X\setminus Y)\alpha\setminus (Y\alpha)$ and replacing,  in the corresponding part of the proof of Lemma \ref{x}, each instance of $Y\alpha$ by $(X\setminus Y)\alpha\setminus (Y\alpha)$, 
we show that $x\beta\le y\beta$, as required. 
\end{proof} 

Notice that, if $\alpha\in\OO(X)$ then the previous lemma is trivially true. However, it has no interest in this case. 
We will only use it in the case where $\alpha\not\in\OO(X)$. 

\medskip 

Now, admit that $Y\alpha\cap(X\setminus Y)\alpha=\emptyset$, $X\setminus Y\neq\emptyset$ 
and there exists $x\in X$ such that $c<x<d$, for all $c\in(X\setminus Y)\alpha$ and for all $d\in Y\alpha$. 
Since $\{t\in\im(\alpha)\mid t<x\}=(X\setminus Y)\alpha$ and $\{t\in\im(\alpha)\mid t>x\}=Y\alpha$ (for any such $x$), then 
$(X\setminus Y)\alpha$ has a maximum or $Y\alpha$ has a minimum. Hence, we clearly have: 

\begin{lemma}\label{bb}
If $Y\alpha\cap(X\setminus Y)\alpha=\emptyset$ and  $X\setminus Y\ne\emptyset$, then 
$$
x\beta=\left\{
\begin{array}{lll}
z_{\max((X\setminus Y)\alpha)} &\in X\setminus Y& \mbox{if $(X\setminus Y)\alpha$ has a maximum}\\ 
z_{\min(Y\alpha)} &\in Y & \mbox{otherwise}, 
\end{array}\right. 
$$
for $x\in X$ such that $c<x<d$, for all $c\in(X\setminus Y)\alpha$ and for all $d\in Y\alpha$. 
\end{lemma}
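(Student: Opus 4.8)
\begin{proof*}{Proof (proposal)}
The plan is to recognize that any $x$ with $c<x<d$ for all $c\in(X\setminus Y)\alpha$ and all $d\in Y\alpha$ falls squarely into the third clause of the definition of $\beta$, and then to read off the value directly. First I would note that, since $X\setminus Y\ne\emptyset$ and $Y\ne\emptyset$, both $(X\setminus Y)\alpha$ and $Y\alpha$ are non-empty; choosing $c\in(X\setminus Y)\alpha$ and $d\in Y\alpha$ yields $c<x<d$, so $x$ is neither a lower bound nor an upper bound of $\im(\alpha)$. Moreover, because $\im(\alpha)=(X\setminus Y)\alpha\cup Y\alpha$, with every element of the first block lying strictly below $x$ and every element of the second block lying strictly above $x$, no element of $\im(\alpha)$ equals $x$; hence $x\in X\setminus\im(\alpha)$, and $x$ is of the type handled by the third clause.

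Next I would identify the two truncated sets appearing in that clause. The hypotheses force $\{t\in\im(\alpha)\mid t<x\}=(X\setminus Y)\alpha$ and $\{t\in\im(\alpha)\mid t>x\}=Y\alpha$, exactly as already observed in the paragraph preceding the lemma. Substituting these into the defining formula for $\beta$ gives $x\beta=z_{\max((X\setminus Y)\alpha)}$ when $(X\setminus Y)\alpha$ has a maximum, and $x\beta=z_{\min(Y\alpha)}$ otherwise, which is precisely the claimed dichotomy. The only point requiring the preliminary remark is well-definedness of the ``otherwise'' branch: if $(X\setminus Y)\alpha$ has no maximum, then, as recorded just before the lemma, $Y\alpha$ must have a minimum, so $z_{\min(Y\alpha)}$ indeed makes sense.

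Finally I would verify the membership assertions. Since $Y\alpha\cap(X\setminus Y)\alpha=\emptyset$, the element $\max((X\setminus Y)\alpha)$ lies in $(X\setminus Y)\alpha\setminus(Y\alpha)$, so by the choice of the representatives $z_\bullet$ its preimage satisfies $z_{\max((X\setminus Y)\alpha)}\in X\setminus Y$; dually $\min(Y\alpha)\in Y\alpha$, whence $z_{\min(Y\alpha)}\in Y$. I do not expect any real obstacle here: all the content is in correctly placing $x$ within the third clause of the definition of $\beta$ and in invoking the disjointness $Y\alpha\cap(X\setminus Y)\alpha=\emptyset$ to locate the chosen preimages; once that is done the formula is immediate, which is why the statement is presented as essentially clear.
\end{proof*}
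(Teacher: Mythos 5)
Your proof is correct and takes essentially the same approach as the paper, which likewise deduces the lemma directly from the identifications $\{t\in\im(\alpha)\mid t<x\}=(X\setminus Y)\alpha$ and $\{t\in\im(\alpha)\mid t>x\}=Y\alpha$ and the third clause of the definition of $\beta$. Your extra verifications (that $x\in X\setminus\im(\alpha)$ and is neither an upper nor a lower bound of $\im(\alpha)$, that the ``otherwise'' branch is well defined because $\min(Y\alpha)$ must then exist, and the membership claims via disjointness and the choice of the elements $z_t$) simply make explicit what the paper compresses into ``Hence, we clearly have''.
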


Next,  in the second case of Lemma \ref{aa}, we prove that $\beta\in\OP(X)$. 

\begin{proposition}\label{e} 
If $\alpha\not\in\OO(X)$ and $Y\alpha\cap(X\setminus Y)\alpha=\emptyset$ then $\beta\in\OP(X)$.
\end{proposition}
\begin{proof} Observe that, as $\alpha\not\in\OO(X)$, then $X\setminus Y\ne\emptyset$. 

\smallskip 

We begin by proving that $\beta$ is order-preserving on $\lb(\alpha)\cup\cl{(X\setminus Y)\alpha}$. 

Let $x,y\in \lb(\alpha)\cup\cl{(X\setminus Y)\alpha}$ be such that $x<y$. 
Since $\beta$ is constant on $\lb(\alpha)$ and, by Lemma \ref{c}, $\beta$ is order-preserving on $\cl{(X\setminus Y)\alpha}$, 
it suffices to consider $x\in\lb(\alpha)$ and $y\in\cl{(X\setminus Y)\alpha}$. 
In this case, we have $x\beta=z_{\max\im(\alpha)}$ or $x\beta=z_{\min\im(\alpha)}$ and, by Lemma \ref{c}, $y\beta=z_c$, 
for some $c\in (X\setminus Y)\alpha$. 
Then, by Lemma \ref{aa}, it follows that $x\beta\le z_c=y\beta$. 

\smallskip 

We continue by showing that $\beta$ is order-preserving on $\cl{Y\alpha}\cup\ub(\alpha)$. 

Let $x,y\in \cl{Y\alpha}\cup\ub(\alpha)$ be such that $x<y$. Once again, 
by Lemma \ref{x}, we have that $\beta$ is order-preserving on $\cl{Y\alpha}$ and, by definition, 
$\beta$ is constant on $\ub(\alpha)$. So, it suffices to consider $x\in \cl{Y\alpha}$ and $y\in\ub(\alpha)$. 
In this case, we have $y\beta=z_{\max\im(\alpha)}$ or $y\beta=z_{\min\im(\alpha)}$ and, by Lemma \ref{x}, $x\beta=z_d$, 
for some $d\in Y\alpha$. Then, by Lemma \ref{aa}, it follows that $x\beta=z_d\le y\beta$. 

\smallskip 

Next, take $x\in \lb(\alpha)\cup\cl{(X\setminus Y)\alpha}$ and $y\in \cl{Y\alpha}\cup\ub(\alpha)$. Clearly, $x<y$. 
Let us prove that we also have $x\beta\ge y\beta$. 
If $x\in\lb(\alpha)$ and $y\in\cl{Y\alpha}$ then $x\beta=z_{\max\im(\alpha)}$ or $x\beta=z_{\min\im(\alpha)}$ and, by Lemma \ref{x}, $y\beta=z_d$, 
for some $d\in Y\alpha$, whence $x\beta\ge z_d=y\beta$, by Lemma \ref{aa}. 
If $x\in\lb(\alpha)$ and $y\in \ub(\alpha)$ then $x\beta=y\beta$. 
If $x\in\cl{(X\setminus Y)\alpha}$ and $y\in\cl{Y\alpha}$ then, by Lemmas \ref{c} and \ref{x}, 
$x\beta=z_c\in X\setminus Y$, for some $c\in (X\setminus Y)\alpha$, and $y\beta=z_d\in Y$, for some $d\in Y\alpha$, 
whence $x\beta=z_c>z_d=y\beta$. 
Finally, if  $x\in\cl{(X\setminus Y)\alpha}$ and $y\in \ub(\alpha)$ then, by Lemma \ref{c}, $x\beta=z_c$, 
for some $c\in (X\setminus Y)\alpha$, and $y\beta=z_{\max\im(\alpha)}$ or $y\beta=z_{\min\im(\alpha)}$, 
whence $x\beta=z_c\ge y\beta$, by Lemma \ref{aa}. 

\smallskip 

Now, let $B=\{x\in X\mid\mbox{$c<x<d$, for all $c\in(X\setminus Y)\alpha$ and for all $d\in Y\alpha$}\}$. 
We will consider three cases. 

\smallskip 

\noindent{\sc case 2.1}. $B=\emptyset$. 

In this case, being $Z=\lb(\alpha)\cup\cl{(X\setminus Y)\alpha}$, we just proved that $Z$ is an ideal of $\beta$ 
(notice that $X\setminus Z= \cl{Y\alpha}\cup\ub(\alpha)$) and so $\beta\in\OP(X)$. 

\smallskip 

\noindent{\sc case 2.2}. $B\ne\emptyset$ and $(X\setminus Y)\alpha$ has a maximum. 

Notice that, for $x\in B$, we have $x\beta=z_{\max((X\setminus Y)\alpha)}\in X\setminus Y$, by Lemma \ref{bb}. 

Take $Z=\lb(\alpha)\cup\cl{(X\setminus Y)\alpha}\cup B$ and let us prove that $Z$ is an ideal of $\beta$. 

In order to prove that $\beta$ is order-preserving on $Z$, let $x,y\in Z$ be such that $x<y$.
Since we already proved that $\beta$ is order-preserving on $\lb(\alpha)\cup\cl{(X\setminus Y)\alpha}$,  
it suffices to consider $x\in \lb(\alpha)\cup\cl{(X\setminus Y)\alpha}$ and $y\in B$. 
Then $x\beta=z_{\max\im(\alpha)}$ or $x\beta=z_{\min\im(\alpha)}$ or, by Lemma \ref{c}, 
$x\beta=z_c\in X\setminus Y$, for some $c\in (X\setminus Y)\alpha$, and 
$y\beta=z_{\max((X\setminus Y)\alpha)}\in X\setminus Y$. 
If  $x\beta=z_{\max\im(\alpha)}$ or $x\beta=z_{\min\im(\alpha)}$ then, by Lemma \ref{aa}, 
we have $x\beta\le z_{\max((X\setminus Y)\alpha)}=y\beta$. 
On the other hand, if $x\beta=z_c$, for some $c\in (X\setminus Y)\alpha$, then $c\le \max((X\setminus Y)\alpha)$ and so, 
as  $\zeta$ is order-preserving on $(X\setminus Y)\alpha$, we have 
$x\beta=z_c=c\zeta\le (\max((X\setminus Y)\alpha))\zeta=z_{\max((X\setminus Y)\alpha)}=y\beta$. 
Thus, we proved that $\beta$ is order-preserving on $Z$. 

Since $X\setminus Z=\cl{Y\alpha}\cup\ub(\alpha)$, we have already proved that $\beta$ is order-preserving on $X\setminus Z$. 

Let $x\in Z$ and $y\in X\setminus Z$. Clearly, $x<y$. If $x\in \lb(\alpha)\cup\cl{(X\setminus Y)\alpha}$, 
we have proved above that $x\beta\ge y\beta$. So, suppose that $x\in B$. Then 
$x\beta=z_{\max((X\setminus Y)\alpha)}\in X\setminus Y$. 
If $y\in \ub(\alpha)$ then $y\beta=z_{\max\im(\alpha)}$ or $y\beta=z_{\min\im(\alpha)}$ and so, by Lemma \ref{aa}, 
we have $y\beta\le z_{\max((X\setminus Y)\alpha)}=x\beta$. 
On the other hand, if $y\in\cl{Y\alpha}$ then, by Lemma \ref{x}, $y\beta=z_d\in Y$, for some $d\in Y\alpha$, and so 
$y\beta=z_d < z_{\max((X\setminus Y)\alpha)}=x\beta$. 

Thus, we have proved that $Z$ is an ideal of $\beta$ and so $\beta\in\OP(X)$. 

\smallskip 

\noindent{\sc case 2.3}. $B\ne\emptyset$ and $(X\setminus Y)\alpha$ does not have a maximum. 

In this case, $\min(Y\alpha)$ exists and, for $x\in B$, by Lemma \ref{bb}, we have $x\beta=z_{\min(Y\alpha)} \in Y$.

Let $Z=\lb(\alpha)\cup\cl{(X\setminus Y)\alpha}$. We aim to show that $Z$ is an ideal of $\beta$. 

We already proved that $\beta$ is order-preserving on $Z$. 

We proceed by showing that $\beta$ is order-preserving on $X\setminus Z=B\cup\cl{Y\alpha}\cup\ub(\alpha)$. 
Let $x,y\in X\setminus Z$ be such that $x<y$. 
Since we already proved that $\beta$ is order-preserving on $\cl{Y\alpha}\cup\ub(\alpha)$, 
it suffices to consider $x\in B$ and $y\in \cl{Y\alpha}\cup\ub(\alpha)$. 
If $y\in \ub(\alpha)$ then $y\beta=z_{\max\im(\alpha)}$ or $y\beta=z_{\min\im(\alpha)}$ and so, by Lemma \ref{aa}, 
we have $x\beta=z_{\min(Y\alpha)} \le y\beta$. 
On the other hand, if $y\in \cl{Y\alpha}$ then $y\beta=z_d$, for some $d\in Y\alpha$, 
whence $\min(Y\alpha)\le d$ and so, as $\zeta$ is order-preserving on $Y\alpha$, we have  
$x\beta= z_{\min(Y\alpha)} =(\min(Y\alpha))\zeta\le d\zeta=z_d=y\beta$. 
Hence, $\beta$ is order-preserving on $X\setminus Z$.

Finally, let $x\in Z$ and $y\in X\setminus Z$. It is clear that $x<y$. 
If $y\in \cl{Y\alpha}\cup\ub(\alpha)$ then we already proved that $x\beta\ge y\beta$. 
So, let us suppose that $y\in B$. Then $y\beta=z_{\min(Y\alpha)} \in Y$. 
If $x\in \lb(\alpha)$ then $x\beta=z_{\max\im(\alpha)}$ or $x\beta=z_{\min\im(\alpha)}$ and so, by Lemma \ref{aa}, 
we have $y\beta=z_{\min(Y\alpha)} \le x\beta$. 
On the other hand, if $x\in\cl{(X\setminus Y)\alpha}$ then, by Lemma \ref{c}, we have 
$x\beta=z_c\in X\setminus Y$, for some $c\in (X\setminus Y)\alpha$,  and so 
$y\beta=z_{\min(Y\alpha)} < z_c=x\beta$. 

Thus, also in this case, we have proved that $Z$ is an ideal of $\beta$ and so $\beta\in\OP(X)$, as required. 
\end{proof} 

\medskip 

Finally, we consider the third and last case of Lemma \ref{aa}, i.e. $Y\alpha\cap(X\setminus Y)\alpha=\{m\}$, for some $m\in X$, or 
equivalently $Y\alpha\cap(X\setminus Y)\alpha\neq\emptyset$ (recall that, in this case, we also have $\alpha\not\in\OO(X)$). 

\begin{lemma}\label{b}
If $Y\alpha\cap(X\setminus Y)\alpha=\{m\}$, for some $m\in X$,  
and $(X\setminus Y)\alpha\setminus\{m\}\ne\emptyset$ then 
$$
x\beta=\left\{
\begin{array}{lll}
z_{\max((X\setminus Y)\alpha\setminus\{m\})} &\in X\setminus Y& \mbox{if $(X\setminus Y)\alpha\setminus\{m\}$ has a maximum}\\ 
z_{m} &\in Y & \mbox{otherwise}, 
\end{array}\right. 
$$
for $x\in X$ such that $c<x<m$, for all $c\in(X\setminus Y)\alpha\setminus\{m\}$. 
\end{lemma}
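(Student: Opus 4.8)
The plan is to locate $x$ with respect to $\im(\alpha)$ and then read off $x\beta$ directly from the third clause in the definition of $\beta$. First I would recall that, since $Y\alpha\cap(X\setminus Y)\alpha=\{m\}$, Proposition \ref{glued} gives $m=\min(Y\alpha)=\max((X\setminus Y)\alpha)$. Fix $x\in X$ with $c<x<m$ for every $c\in(X\setminus Y)\alpha\setminus\{m\}$, noting that the constraint ``$c<x$ for all such $c$'' is meaningful precisely because the hypothesis assumes $(X\setminus Y)\alpha\setminus\{m\}\ne\emptyset$.

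The first substantive step is to show that $x\in X\setminus\im(\alpha)$ and that $x$ is neither an upper nor a lower bound of $\im(\alpha)$, so that the third clause of the definition of $\beta$ governs $x\beta$. Since $m=\min(Y\alpha)$ and $x<m$, every element of $Y\alpha$ is $\geq m>x$, so none equals $x$; since $m=\max((X\setminus Y)\alpha)$ and, by the choice of $x$, the remaining elements of $(X\setminus Y)\alpha$ lie strictly below $x$, no element of $(X\setminus Y)\alpha$ equals $x$ either. Hence $x\notin\im(\alpha)$. Moreover $m\in\im(\alpha)$ with $m>x$ shows $x$ is not an upper bound, while any $c\in(X\setminus Y)\alpha\setminus\{m\}$ with $c<x$ shows $x$ is not a lower bound.

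Next I would identify the two sets appearing in that clause. Using $x<m=\min(Y\alpha)$, no element of $Y\alpha$ lies below $x$, and the elements of $(X\setminus Y)\alpha$ that lie below $x$ are exactly those different from $m$; thus $\{t\in\im(\alpha)\mid t<x\}=(X\setminus Y)\alpha\setminus\{m\}$. Symmetrically, since $m$ is a lower bound of $Y\alpha$, all of $Y\alpha$ lies above $x$, while the only element of $(X\setminus Y)\alpha$ above $x$ is $m$ itself; thus $\{t\in\im(\alpha)\mid t>x\}=Y\alpha$, whose minimum is exactly $m$. Substituting these into the definition of $\beta$ gives $x\beta=z_{\max((X\setminus Y)\alpha\setminus\{m\})}$ when that maximum exists, and $x\beta=z_{\min\{t\in\im(\alpha)\mid t>x\}}=z_m$ otherwise, which is the asserted formula.

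Finally I would verify the two membership claims. Since $m\in Y\alpha$, the choice rule for the $z_x$ places $z_m\in m\alpha^{-1}\cap Y\subseteq Y$. And if $c=\max((X\setminus Y)\alpha\setminus\{m\})$ exists, then $c\in(X\setminus Y)\alpha$ with $c\ne m$, so $c\notin Y\alpha$ (because $Y\alpha\cap(X\setminus Y)\alpha=\{m\}$), giving $c\in\im(\alpha)\setminus Y\alpha$ and hence $z_c\in X\setminus Y$ by the choice rule. The only place needing care is the pair of set computations in the third step, in particular confirming that the ``otherwise'' branch of $\beta$ selects precisely $m$ and no other element of $Y\alpha$; once the two set equalities are in hand, the rest is immediate, and I do not anticipate any genuine obstacle beyond this bookkeeping.
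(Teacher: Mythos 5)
Your proposal is correct and follows essentially the same route as the paper: identify the two sets $\{t\in\im(\alpha)\mid t<x\}=(X\setminus Y)\alpha\setminus\{m\}$ and $\{t\in\im(\alpha)\mid t>x\}$ (with minimum $m$) and read off $x\beta$ from the third clause of the definition of $\beta$. You merely make explicit some details the paper leaves implicit --- that $x\notin\im(\alpha)$ and is neither an upper nor a lower bound of $\im(\alpha)$, and that the choice rule for the $z_t$ yields $z_m\in Y$ and $z_{\max((X\setminus Y)\alpha\setminus\{m\})}\in X\setminus Y$ --- all of which is accurate.
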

\begin{proof}
Let $x\in X$ be such that $c<x<m$, for all $c\in(X\setminus Y)\alpha\setminus\{m\}$. 
Then $\{t\in\im(\alpha)\mid t<x\}=(X\setminus Y)\alpha\setminus\{m\}$ and $m=\min\{t\in\im(\alpha)\mid t>x\}$ and so the result follows. 
\end{proof} 

With the following proposition we conclude the proof of Theorem \ref{regOP}. 

\begin{proposition}\label{d} 
If $Y\alpha\cap(X\setminus Y)\alpha\neq\emptyset$ then $\beta\in\OP(X)$. 
\end{proposition}
\begin{proof}
Let $m\in X$ be such that $Y\alpha\cap(X\setminus Y)\alpha=\{m\}$. 
Recall that $\max((X\setminus Y)\alpha)=m=\min(Y\alpha)$. 

\smallskip 

Firstly, we prove that $\beta$ is order-preserving on $\lb(\alpha)\cup\cl{(X\setminus Y)\alpha\setminus\{m\}}$. 

Let $x,y \in \lb(\alpha)\cup\cl{(X\setminus Y)\alpha\setminus\{m\}}$ be such that $x<y$. As $\beta$ is constant in $\lb(\alpha)$, by definition, and order-preserving on $\cl{(X\setminus Y)\alpha\setminus\{m\}}$, by Lemma \ref{c}, it suffices to consider the case $x\in\lb(\alpha)$ and $y\in \cl{(X\setminus Y)\alpha\setminus\{m\}}$. 
In this case, $x\beta=z_{\max\im(\alpha)}$ or $x\beta=z_{\min\im(\alpha)}$ and, by Lemma \ref{c}, $y\beta=z_c$, 
for some $c\in (X\setminus Y)\alpha\setminus\{m\}$. 
Then, by Lemma \ref{aa}, $x\beta\le z_c=y\beta$. 

\smallskip 

In second place, 
if $(X\setminus Y)\alpha\ne\{m\}$ or $\im(\alpha)$ has a maximum, 
we prove that $\beta$ is order-preserving on $\cl{Y\alpha}\cup\ub(\alpha)$. 

Let $x,y\in \cl{Y\alpha}\cup\ub(\alpha)$ be such that $x<y$. As $\beta$ is constant in $\ub(\alpha)$, by definition, and order-preserving on $\cl{Y\alpha}$, by Lemma \ref{x}, it suffices to consider the case $x\in\cl{Y\alpha}$ and $y\in \ub(\alpha)$. 
In this case, $x\beta=z_d$, for some $d\in Y\alpha$, by Lemma \ref{x}, and $y\beta=z_{\max\im(\alpha)}$ or $y\beta=z_{\min\im(\alpha)}$. Then, by Lemma \ref{aa}, $x\beta=z_d\le y\beta$. 

\smallskip 

Thirdly, take $x\in \lb(\alpha)\cup\cl{(X\setminus Y)\alpha\setminus\{m\}}$ and $y\in \cl{Y\alpha}\cup\ub(\alpha)$. 
It is clear that $x<y$. Moreover, 
if $(X\setminus Y)\alpha\ne\{m\}$ or $\im(\alpha)$ has a maximum, 
we also have $x\beta\ge y\beta$. In fact, if $x\in\lb(\alpha)$ and $y\in\ub(\alpha)$ then $x\beta=y\beta$. 
If  $x\in\lb(\alpha)$ and $y\in \cl{Y\alpha}$ then 
$x\beta=z_{\max\im(\alpha)}$ or $x\beta=z_{\min\im(\alpha)}$ and $y\beta=z_d$, 
for some $d\in Y\alpha$, by Lemma \ref{x}, 
whence $x\beta\ge z_d=y\beta$, by Lemma \ref{aa}. 
If $x\in \cl{(X\setminus Y)\alpha\setminus\{m\}}$ and $y\in\ub(\alpha)$ then $x\beta=z_c$, 
for some $c\in (X\setminus Y)\alpha\setminus\{m\}$, by Lemma \ref{c}, and $y\beta=z_{\max\im(\alpha)}$ or $y\beta=z_{\min\im(\alpha)}$, 
whence $x\beta=z_c\ge y\beta$, by Lemma \ref{aa}. 
Finally, if $x\in \cl{(X\setminus Y)\alpha\setminus\{m\}}$ and $y\in \cl{Y\alpha}$ then, by Lemmas \ref{c} and \ref{x}, 
$x\beta=z_c\in X\setminus Y$, for some $c\in (X\setminus Y)\alpha\setminus\{m\}$, and  
$y\beta=z_d\in Y$, for some $d\in Y\alpha$, whence $x\beta=z_c>z_d=y\beta$. 

\smallskip 

Next, we will consider four cases. 

Let $A=\{x\in X\mid\mbox{$t<x<m$, for all $t\in(X\setminus Y)\alpha\setminus\{m\}$}\}$.

\smallskip 

\noindent{\sc case 1}. $(X\setminus Y)\alpha\setminus\{m\}$ has a maximum. 

Let $a=\max((X\setminus Y)\alpha\setminus\{m\})$. 

Let $Z=]-\infty,m[$. Notice that, $Z=\lb(\alpha)\cup\cl{(X\setminus Y)\alpha\setminus\{m\}}\cup A$ (and, 
in this case, $A=]a,m[$) and $X\setminus Z=\cl{Y\alpha}\cup\ub(\alpha)$. 

If $y\in A$ then $y\beta=z_a\in X\setminus Y$, by Lemma \ref{b}, and so $\beta$ is constant in $A$. 
Let $x,y\in Z$ be such that $x<y$. As $\beta$ is constant in $A$ and order-preserving on $\lb(\alpha)\cup\cl{(X\setminus Y)\alpha\setminus\{m\}}$, in order to prove that $\beta$ is order-preserving on $Z$, it suffices to consider the case 
$x\in \lb(\alpha)\cup\cl{(X\setminus Y)\alpha\setminus\{m\}}$ and $y\in A$. 
Then $x\beta=z_{\max\im(\alpha)}$ or $x\beta=z_{\min\im(\alpha)}$ or $x\beta=z_c$, 
for some $c\in (X\setminus Y)\alpha\setminus\{m\}$. 
If $x\beta=z_{\max\im(\alpha)}$ or $x\beta=z_{\min\im(\alpha)}$ then, by Lemma \ref{aa}, $x\beta\le z_a=y\beta$. 
If $x\beta=z_c$, 
for some $c\in (X\setminus Y)\alpha\setminus\{m\}$, then $c\le a$ and so $x\beta=z_c\le z_a=y\beta$, by Lemma \ref{c}. 
Hence, we proved that $\beta$ is order-preserving on $Z$. 

Above, we already proved that $\beta$ is also order-preserving on $X\setminus Z$. 

Let $x\in Z$ and $y\in X\setminus Z$. Clearly, $x<y$. If $x\in \lb(\alpha)\cup\cl{(X\setminus Y)\alpha\setminus\{m\}}$ then, as proved above, we have $x\beta\ge y\beta$. So, suppose that $x\in A$. 
If $y\in \cl{Y\alpha}$ then $y\beta=z_d\in Y$, 
for some $d\in Y\alpha$, by Lemma \ref{x}, whence $x\beta=z_a>z_d=y\beta$. 
If $y\in\ub(\alpha)$ then $y\beta=z_{\max\im(\alpha)}$ or $y\beta=z_{\min\im(\alpha)}$ and so, 
by Lemma \ref{aa}, we have $x\beta=z_a\ge y\beta$. 

Thus, in this case, we showed that $Z$ is an ideal of $\beta$ and so $\beta\in\OP(X)$. 

\smallskip 

\noindent{\sc case 2}. $(X\setminus Y)\alpha\setminus\{m\}\ne\emptyset$ and does not have a maximum. 

Let $Z=\lb(\alpha)\cup\cl{(X\setminus Y)\alpha\setminus\{m\}}$. Then $X\setminus Z=A\cup\cl{Y\alpha}\cup\ub(\alpha)$.  

We proved above that $\beta$ is order-preserving on $Z$. 

If $x\in A$ then $x\beta=z_m\in Y$, by Lemma \ref{b}, and so $\beta$ is constant in $A$. 
Let $x,y\in X\setminus Z$ be such that $x<y$. As $\beta$ is constant in $A$ 
and order-preserving on $\cl{Y\alpha}\cup\ub(\alpha)$, 
in order to prove that $\beta$ is order-preserving on $X\setminus Z$, it suffices to consider the case 
$x\in A$ and $y\in \cl{Y\alpha}\cup\ub(\alpha)$. 
If $y\in \cl{Y\alpha}$ then $y\beta=z_d$, for some $d\in Y\alpha$, by Lemma \ref{x}, 
and so, as $m\le d$, we have $x\beta=z_m\le z_d=y\beta$. 
If $y\in\ub(\alpha)$ then $y\beta=z_{\max\im(\alpha)}$ or $y\beta=z_{\min\im(\alpha)}$, 
whence $x\beta=z_m\le y\beta$, by Lemma \ref{aa}. 
Thus, $\beta$ is order-preserving on $X\setminus Z$. 

Let $x\in Z$ and $y\in X\setminus Z$. Clearly, $x<y$.  
If $y\in \cl{Y\alpha}\cup\ub(\alpha)$ then, as proved above, we have $x\beta\ge y\beta$. So, suppose that $y\in A$. 
If $x\in\lb(\alpha)$ then $x\beta=z_{\max\im(\alpha)}$ or $x\beta=z_{\min\im(\alpha)}$ and so, 
by Lemma \ref{aa}, we have $x\beta\ge z_m=y\beta$. 
If $x\in \cl{(X\setminus Y)\alpha\setminus\{m\}}$ then $x\beta=z_c\in X\setminus Y$, for some $c\in (X\setminus Y)\alpha\setminus\{m\}$, by Lemma \ref{c}, whence $x\beta=z_c>z_m=y\beta$. 

Thus, also in this case, we showed that $Z$ is an ideal of $\beta$ and so $\beta\in\OP(X)$.  

\smallskip

\noindent{\sc case 3}. $(X\setminus Y)\alpha=\{m\}$ and $\im(\alpha)$ has a maximum. 

Let $Z=\lb(\alpha)$. Then $X\setminus Z=\cl{Y\alpha}\cup\ub(\alpha)$. 

In this case, if $Z=\emptyset$ then, as proved above, $\beta$ is order-preserving on $\cl{Y\alpha}\cup\ub(\alpha)=X$, 
whence $\beta\in\OO(X)$ and so $\beta\in\OP(X)$.
On the other hand, if $Z\ne\emptyset$ then, as proved above, $Z$ is an ideal of $\beta$ and so again we obtain 
$\beta\in\OP(X)$.

\smallskip

\noindent{\sc case 4}. $(X\setminus Y)\alpha=\{m\}$ and $\im(\alpha)$ does not have a maximum. 

Let $Z=\lb(\alpha)\cup\cl{Y\alpha}$. Then $X\setminus Z=\ub(\alpha)$. 

Notice that, in this case, we have $\min\im(\alpha)=m$ and $x\beta=z_m$, for all $x\in\lb(\alpha)\cup\ub(\alpha)$. 
Moreover, by Lemma \ref{aa}, we have $z_m\le z_x$, for all $x\in\im(\alpha)$ and, by Lemma \ref{x}, $\beta$ is order-preserving on $\cl{Y\alpha}$. All this together allow us to easily deduce, in this last case, that $Z$ is an ideal of $\beta$ 
and thus $\beta\in\OP(X)$, as required. 
\end{proof} 


\section{Green's relations}\label{RGR}

In this section we characterize Green's relations in $\OP(X)$, $\PO(X)$, $\POP(X)$, $\POI(X)$ and $\POPI(X)$. 
We begin by focusing our attention in the semigroup $\OP(X)$, which is, in fact, the most challenging case. 

\smallskip 

First, recall the following description of the Green's relations in $\T(X)$. 
Let $\alpha,\beta\in\T(X)$. Then, in $\T(X)$, we have \cite{Howie:1995}: 
\begin{itemize}
\item $\alpha\GRL\beta$ if and only if $\im(\alpha)=\im(\beta)$;
\item $\alpha\GRR\beta$ if and only if $\ker(\alpha)=\ker(\beta)$;
\item $\alpha\GRD\beta$ if and only if $|\im(\alpha)|=|\im(\beta)|$;
\item $\GRJ=\GRD$. 
\end{itemize}

A description of the Green's relations in $\OO(X)$ was given by Fernandes et al. in \cite{Fernandes&al:2014}.  
For $\OP(X)$, we will find characterizations very similar to those of $\OO(X)$. The complexity of their demonstrations is in line with those of $\OO(X)$, 
with the exception of the description of Green's relation $\GRL$, whose proof is surprisingly much harder.

\medskip 

Let $\alpha,\beta\in\OP(X)$. If $\alpha\GRL\beta$ in $\OP(X)$ then $\alpha\GRL\beta$ in $\T(X)$
and so $\im(\alpha)=\im(\beta)$.  Next, we aim to show that the converse is also true. 
We start by proving 
two lemmas. 

First, notice that, if $\alpha\in\POP(X)$ admits $Y$ as an ideal then $x\leq y$,  
for all $x\in (X\setminus Y)\alpha$ and $y\in Y\alpha$. 

\begin{lemma}\label{lema1L}
Let $\alpha,\beta\in\OP(X)$ be such that $\im(\alpha)=\im(\beta)$. 
If $A$ and $B$ are ideals of $\alpha$ and $\beta$, respectively, 
then $A\alpha\subseteq B\beta$ or $B\beta\subseteq A\alpha$. 
\end{lemma}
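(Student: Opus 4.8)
The plan is to recognize that each of the sets $A\alpha$ and $B\beta$ is an order filter (up-set) of the chain $I=\im(\alpha)=\im(\beta)$, and then to invoke the elementary fact that the order filters of a chain are linearly ordered by inclusion. Since $A\alpha$ and $B\beta$ would both be up-sets of the totally ordered set $I$, one must contain the other, which is exactly the desired conclusion.

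First I would record the structural fact noted just before the statement: if $A$ is an ideal of $\alpha$ then $x\le y$ for all $x\in(X\setminus A)\alpha$ and all $y\in A\alpha$; that is, every value coming from the ideal dominates every value coming from its complement. Using $\dom(\alpha)=X$, this gives the decomposition $I=A\alpha\cup(X\setminus A)\alpha$ in which $A\alpha$ sits entirely above $(X\setminus A)\alpha$, and the analogous decomposition $I=B\beta\cup(X\setminus B)\beta$ holds for $\beta$.

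Next I would verify that $A\alpha$ is closed upward in $I$. Take $u\in A\alpha$ and $v\in I$ with $u\le v$; since $I=A\alpha\cup(X\setminus A)\alpha$, either $v\in A\alpha$ and we are done, or $v\in(X\setminus A)\alpha$, in which case the domination property forces $v\le u$, hence $v=u\in A\alpha$. Thus $A\alpha$ is an order filter of $I$, and the identical argument applies to $B\beta$. This step uses nothing about uniqueness of the ideal, so it remains valid for every choice of $A$ and $B$ (including the degenerate cases where $\alpha$ or $\beta$ lies in $\MO(X)$ or is constant, where the ideal need not be unique).

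Finally I would close the argument with the chain property: given two up-sets $U$ and $V$ of a totally ordered set, if $U\not\subseteq V$ then picking $u\in U\setminus V$ and any $v\in V$, comparability in $I$ together with the up-set property of $V$ rules out $v\le u$ (it would give $u\in V$), so $u<v$ and hence $v\in U$; thus $V\subseteq U$. Applying this with $U=A\alpha$ and $V=B\beta$ yields $A\alpha\subseteq B\beta$ or $B\beta\subseteq A\alpha$. The only delicate point is confirming that $A\alpha$ is genuinely an up-set of $I$ even when $A\alpha$ and $(X\setminus A)\alpha$ overlap in the single point guaranteed by Proposition \ref{glued}; the computation above absorbs this case automatically, so I expect no real obstacle beyond a careful appeal to the domination inequality.
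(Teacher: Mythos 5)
Your proof is correct, and it takes a mildly but genuinely different route from the paper's. The paper argues directly by contradiction: assuming $A\alpha\nsubseteq B\beta$, it picks a witness $z\in A\alpha\setminus B\beta$, places it in $(X\setminus B)\beta$ via the decomposition $\im(\beta)=B\beta\cup(X\setminus B)\beta$, deduces $z<y$ for every $y\in B\beta$, and then forces each such $y$ into $A\alpha$ by ruling out $y\in(X\setminus A)\alpha$ with the domination inequality. You factor the same two ingredients (the decomposition of the common image $I=\im(\alpha)=\im(\beta)$, which uses $\dom(\alpha)=\dom(\beta)=X$, and the domination property coming from (OP2)) into a structural claim plus a standard lemma: each of $A\alpha$ and $B\beta$ is an order filter of the chain $I$, and order filters of a totally ordered set are linearly ordered by inclusion. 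Your filter verification is sound --- in particular the antisymmetry step (if $u\in A\alpha$, $u\le v$ and $v\in(X\setminus A)\alpha$, then $v\le u$ forces $v=u\in A\alpha$) correctly absorbs the possible one-point overlap $A\alpha\cap(X\setminus A)\alpha=\{m\}$ from Proposition \ref{glued}, and your remark that nothing depends on uniqueness of the ideal is apt, since the lemma quantifies over arbitrary ideals and constant maps admit many. What your packaging buys is modularity and a reusable intermediate statement (the image of an ideal of an orientation-preserving full transformation is a filter of the image chain), whereas the paper's witness-chasing version is shorter and self-contained but leaves that structural fact implicit; at bottom the inequality manipulations coincide, so the two proofs are the same argument organized differently.
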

\begin{proof}
Suppose that $A\alpha\nsubseteq B\beta$. 
Then, there exists $z\in A\alpha$ such that $z\not\in B\beta$. 
As $z\in A\alpha\subseteq\im(\alpha)=\im(\beta)=B\beta\cup (X\setminus B)\beta$ 
and $z\not\in B\beta$, then $z\in(X\setminus B)\beta$. It follows that 
$z<y$, for all $y\in B\beta$. 

Let $y\in B\beta$. Since $B\beta\subseteq\im(\beta)=\im(\alpha)=A\alpha\cup (X\setminus A)\alpha$, 
then $y\in A\alpha$ or $y\in(X\setminus A)\alpha$. 
If $y\in(X\setminus A)\alpha$ then $y\leq z$, a contradiction. 
Hence $y\in A\alpha$ and so $B\beta\subseteq A\alpha$, as required. 
\end{proof}

\begin{lemma}\label{lema2L}
Let $\alpha,\beta\in\OP(X)$ be such that $\im(\alpha)=\im(\beta)$.
If $A$ and $B$ are ideals of $\alpha$ and $\beta$, respectively,  
and $A\alpha\subsetneqq B\beta$ then $(X\setminus B)\beta\subseteq (X\setminus A)\alpha$.
\end{lemma}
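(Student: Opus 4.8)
The plan is to reduce the claim to a single problematic element and then argue by contradiction. The only input I need beyond the hypotheses is the basic order fact recorded just before Lemma~\ref{lema1L}: if $\gamma\in\MOP(X)$ admits $W$ as an ideal, then $u\le v$ for all $u\in(X\setminus W)\gamma$ and all $v\in W\gamma$. I would apply this fact to $\alpha$ (with ideal $A$) and to $\beta$ (with ideal $B$), constantly using that $\im(\alpha)=\im(\beta)$, so that $\im(\alpha)=A\alpha\cup(X\setminus A)\alpha=B\beta\cup(X\setminus B)\beta=\im(\beta)$.

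First I would fix an arbitrary $z\in(X\setminus B)\beta$ and aim to show $z\in(X\setminus A)\alpha$. Since $z\in\im(\beta)=\im(\alpha)=A\alpha\cup(X\setminus A)\alpha$, either $z\in(X\setminus A)\alpha$, in which case there is nothing to prove, or $z\in A\alpha$; the entire content of the lemma is to exclude this second alternative.

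Assume then $z\in A\alpha$. Here I would use the \emph{strict} inclusion $A\alpha\subsetneqq B\beta$ to pick a witness $w\in B\beta\setminus A\alpha$. Because $w\in B\beta\subseteq\im(\beta)=\im(\alpha)=A\alpha\cup(X\setminus A)\alpha$ and $w\notin A\alpha$, necessarily $w\in(X\setminus A)\alpha$. Now the two ideal structures constrain the pair $z,w$ in opposite directions: the order fact applied to $\alpha$ gives $w\le z$ (since $w\in(X\setminus A)\alpha$ and $z\in A\alpha$), while the order fact applied to $\beta$ gives $z\le w$ (since $z\in(X\setminus B)\beta$ and $w\in B\beta$). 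Hence $z=w$, contradicting $z\in A\alpha$ and $w\notin A\alpha$. Therefore $z\in(X\setminus A)\alpha$, and as $z$ was arbitrary, $(X\setminus B)\beta\subseteq(X\setminus A)\alpha$.

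The argument is short, so I do not expect a serious obstacle; the one point that must be handled carefully is routing the witness $w$ supplied by strictness through the common image $\im(\alpha)=\im(\beta)$ so that it is forced into $(X\setminus A)\alpha$. Once that placement is secured, the collision of the two inequalities $w\le z$ and $z\le w$ immediately produces the contradiction.
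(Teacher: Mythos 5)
Your proof is correct and is essentially the paper's own argument: both pick a witness $w\in B\beta\setminus A\alpha$ (forced into $(X\setminus A)\alpha$ via the common image), then for an arbitrary element of $(X\setminus B)\beta$ play the two ideal inequalities $w\le z$ and $z\le w$ against each other to force $z=w$ and a contradiction. The only difference is cosmetic — you phrase it as excluding the alternative $z\in A\alpha$, while the paper derives the equality directly — so there is nothing to add.
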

\begin{proof}
Let $z\in B\beta\setminus A\alpha$. 
Then, as $z\in B\beta\subseteq\im(\beta)=\im(\alpha)=A\alpha\cup(X\setminus A)\alpha$, we can
conclude that $z\in(X\setminus A)\alpha$. 
Let $y\in(X\setminus B)\beta$. Then $y\leq z$. If $y\in A\alpha$, as
$z\in(X\setminus A)\alpha$, then $z\leq y$ and so 
$y=z\not\in A\alpha$, a contradiction. Hence $y\in(X\setminus A)\alpha$, 
since $(X\setminus B)\beta\subseteq\im(\beta)=\im(\alpha)=A\alpha\cup(X\setminus A)\alpha$. 
Thus  $(X\setminus B)\beta\subseteq (X\setminus A)\alpha$, as required.
\end{proof}

%

Now, we prove the description of Green's relation $\GRL$ on $\OP(X)$ announced above. 

\begin{theorem}\label{prop1}
Let $\alpha,\beta\in\OP(X)$. Then $\alpha\GRL\beta$ in $\OP(X)$ if and only if $\im(\alpha)=\im(\beta)$.
\end{theorem}
\begin{proof} 
It remains to prove that $\im(\alpha)=\im(\beta)$ implies $\alpha\GRL\beta$. 
Therefore, suppose that $\im(\alpha)=\im(\beta)$ and 
let $A$ and $B$ be ideals of $\alpha$ and $\beta$, respectively. 

For each $y\in\im(\beta)=\im(\alpha)$, 
choose $z_y\in y\beta^{-1}$ 
such that if $y\in B\beta$ then $z_y\in B$. 
Define a transformation $\gamma\in\T(X)$ by $x\gamma=z_{x\alpha}$, for all $x\in X$. 
Then, $x\gamma\beta=z_{x\alpha}\beta=x\alpha$, for all $x\in X$, i.e. $\alpha=\gamma\beta$. 
Now, our aim is to prove that $\gamma\in\OP(X)$. With this in mind and in view of Lemma \ref{lema1L}, we consider the two possible cases. 

\smallskip

\noindent\textsc{case 1.} $A\alpha\subseteq B\beta$.

Let $C=X\setminus\{x\in X\setminus A\mid x\alpha\in B\beta\}$. 

We begin by proving that $C$ satisfies (OP1) for $\gamma$.  

Let $x_1,x_2\in C$ be such that $x_1\leq x_2$. 
Then either $x_i\in A$ or $x_i\in X\setminus A$ and $x_i\alpha\not\in B\beta$, for $i=1,2$. 

First, suppose that $x_1,x_2\in A$. Then
$x_1\alpha,x_2\alpha\in A\alpha\subseteq B\beta$, 
from which it follows that $z_{x_1\alpha},z_{x_2\alpha}\in B$. 
Moreover, $x_1\alpha\leq x_2\alpha$.  
If $z_{x_2\alpha}<z_{x_1\alpha}$ then 
$x_2\alpha=z_{x_2\alpha}\beta\le z_{x_1\alpha}\beta=x_1\alpha$, 
whence $x_1\alpha=x_2\alpha$ and so $z_{x_1\alpha}=z_{x_2\alpha}$, 
which is a contradiction. 
Hence $x_1\gamma=z_{x_1\alpha}\leq z_{x_2\alpha}=x_2\gamma$.

Secondly, suppose that $x_1,x_2\in X\setminus A$. 
Then $x_1\alpha\leq x_2\alpha$. Additionally,  $x_1\alpha, x_2\alpha\not\in B\beta$ 
from which follows $z_{x_1\alpha},z_{x_2\alpha}\in X\setminus B$. 
As above, if $z_{x_2\alpha}<z_{x_1\alpha}$,  
we may derive a contradiction. 
Hence $x_1\gamma=z_{x_1\alpha}\leq z_{x_2\alpha}=x_2\gamma$.

Finally, suppose that $x_1\in A$ and $x_2\in X\setminus A$. 
Then, as above, we may deduce that $z_{x_1\alpha}\in B$ and 
$z_{x_2\alpha}\in X\setminus B$. Hence 
$x_1\gamma=z_{x_1\alpha}<z_{x_2\alpha}=x_2\gamma$. 

Notice that, since $x_1\le x_2$, we cannot have $x_1\in X\setminus A$
and $x_2\in A$. 

Now, let $x_1,x_2\in X\setminus C$ be such that $x_1\leq x_2$. 
Then $x_1,x_2\in X\setminus A$ and $x_1\alpha,x_2\alpha\in B\beta$. 
Hence $x_1\alpha\leq x_2\alpha$ and $z_{x_1\alpha},z_{x_2\alpha}\in B$. 
Once again, if $z_{x_2\alpha}<z_{x_1\alpha}$ then 
a contradiction can be derived. 
Hence $x_1\gamma=z_{x_1\alpha}\leq z_{x_2\alpha}=x_2\gamma$.

Thus, we proved that $C$ satisfies (OP1) for $\gamma$.  

Next, we prove that $C$ satisfies (OP2) for $\gamma$.  

Let $x_1\in C$ and $x_2\in X\setminus C$. 
Then $x_2\in X\setminus A$ and $x_2\alpha\in B\beta$. 
Regarding $x_1$, we have two cases. 

First, suppose that $x_1\in A$. 
Then, we obtain  $x_1\le x_2$ and $x_1\alpha\geq x_2\alpha$. 
On the other hand, we also have $x_1\alpha\in A\alpha\subseteq B\beta$. 
Thus $z_{x_1\alpha},z_{x_2\alpha}\in B$. 
If $z_{x_1\alpha}<z_{x_2\alpha}$ then again 
a contradiction can be derived. 
Hence $x_1\gamma=z_{x_1\alpha}\geq z_{x_2\alpha}=x_2\gamma$.

Lastly, suppose that $x_1\in X\setminus A$. 
Then $x_1\alpha\not\in B\beta$ and so $x_1\alpha\in (X\setminus B)\beta$ and $z_{x_1\alpha}\in X\setminus B$. 

On the other hand, as $x_2\alpha\in B\beta$, then $z_{x_2\alpha}\in B$. Hence 
$x_1\alpha\le x_2\alpha$ and $x_2\gamma=z_{x_2\alpha}< z_{x_1\alpha}=x_1\gamma$. 

If $x_2\le x_1$ then $x_2\alpha\le x_1\alpha$, since $x_1,x_2\in X\setminus A$, and so $x_1\alpha=x_2\alpha$, 
from which follows that $z_{x_1\alpha}= z_{x_2\alpha}$, a contradiction. Thus, we also have $x_1< x_2$. 
This finishes the proof that $C$ satisfies (OP2) for $\gamma$.  

Therefore, $C$ is an ideal of $\gamma$ (notice that, as $A\subseteq C$, we have $C\ne\emptyset$) and so $\gamma\in\OP(X)$.

\smallskip

\noindent\textsc{case 2.} $B\beta\subsetneqq A\alpha$. 

Notice that, by Lemma \ref{lema2L}, we have $(X\setminus A)\alpha\subseteq (X\setminus B)\beta$.

Let $C=\{x\in A\mid x\alpha\in A\alpha\setminus B\beta\}$. 

Observe that $C\ne\emptyset$, otherwise we would have $A\alpha\subseteq B\beta$, a contradiction. 

We start by proving that $C$ satisfies (OP1) for $\gamma$. 

Let $x_1,x_2\in C$ be such that $x_1\leq x_2$. 
Then $x_1,x_2\in A$ and so $x_1\leq x_2$ implies $x_1\alpha\leq x_2\alpha$. 
Moreover,  $x_1\alpha,x_2\alpha\in A\alpha\setminus B\beta$, from which follows 
$z_{x_1\alpha},z_{x_2\alpha}\in X\setminus B$.
If $z_{x_2\alpha}<z_{x_1\alpha}$ then, once more,  
a contradiction can be derived. 
Thus $x_1\gamma=z_{x_1\alpha}\le z_{x_2\alpha}=x_2\gamma$.

Next, let $x_1,x_2\in X\setminus C$ be such that $x_1\leq x_2$. 
Then, either $x_i\in X\setminus A$ or $x_i\in A$ and $x_i\alpha\in B\beta$, 
for $i=1,2$.

First, suppose that $x_1,x_2\in A$. 
Then $x_1\alpha\leq x_2\alpha$. Additionally, 
$x_1\alpha,x_2\alpha\in B\beta$ and so $z_{x_1\alpha},z_{x_2\alpha}\in B$. 
Once again, 
if $z_{x_2\alpha}<z_{x_1\alpha}$ then 
a contradiction can be derived. 
Thus $x_1\gamma=z_{x_1\alpha}\le z_{x_2\alpha}=x_2\gamma$.

Secondly, suppose that $x_1\in A$ and $x_2\in X\setminus A$. 
Then, as above,  $x_1\alpha\in B\beta$ and so $z_{x_1\alpha}\in B$.  
Moreover, $x_2\alpha\in(X\setminus A)\alpha\subseteq (X\setminus B)\beta$. 
Let us suppose that we also have $x_2\alpha\in B\beta$. 
Then, by Proposition \ref{glued}, we get $x_2\alpha=\max((X\setminus B)\beta)=\min(B\beta)$. 
Since $B\beta\subsetneqq A\alpha$, there exists $y\in A\alpha$ such that $y\not\in B\beta$. 
Hence $y\in(X\setminus B)\beta$ and so $y\leq x_2\alpha$. 
On the other hand, 
as $x_2\alpha\in(X\setminus A)\alpha$ and $y\in A\alpha$, we obtain $x_2\alpha\leq y$. 
Then $y=x_2\alpha\in B\beta$, which is a contradiction. Thus $x_2\alpha\not\in B\beta$ and
so $z_{x_2\alpha}\in X\setminus B$. Therefore, 
$x_1\gamma=z_{x_1\alpha}<z_{x_2\alpha}=x_2\gamma$.

Again, notice that, since $x_1\le x_2$, we cannot have $x_1\in X\setminus A$
and $x_2\in A$. Therefore, finally, we suppose that $x_1,x_2\in X\setminus A$.
Then, we have $x_1\alpha, x_2\alpha\in(X\setminus A)\alpha\subseteq (X\setminus B)\beta$ 
and, as above, we may deduce that $x_1\alpha, x_2\alpha\not\in B\beta$, from which follows that 
$z_{x_1\alpha},z_{x_2\alpha}\in X\setminus B$. 
Moreover, we have $x_1\alpha\le x_2\alpha$. 
So, over again, 
if $z_{x_2\alpha}<z_{x_1\alpha}$ then 
a contradiction can be derived. 
Thus $x_1\gamma=z_{x_1\alpha}\le z_{x_2\alpha}=x_2\gamma$.

Thus, we proved that $C$ satisfies (OP1) for $\gamma$.  

It remains to prove that $C$ satisfies (OP2) for $\gamma$.  

Let $x_1\in C$ and $x_2\in X\setminus C$. 
Then $x_1\in A$ and $x_1\alpha\not\in B\beta$ and so $z_{x_1\alpha}\in X\setminus B$.   
On the other hand, either $x_2\in X\setminus A$ or $x_2\in A$ and $x_2\alpha\in B\beta$.  

First, suppose that $x_2\in X\setminus A$. As $x_1\in A$, we have $x_1\le x_2$ and $x_1\alpha\ge x_2\alpha$. 
On the other hand,  $x_2\alpha\in(X\setminus A)\alpha\subseteq (X\setminus B)\beta$ 
and, once again, we may deduce that $x_2\alpha\not\in B\beta$, from which follows that 
$z_{x_2\alpha}\in X\setminus B$. 
If $z_{x_1\alpha}<z_{x_2\alpha}$ then $x_1\alpha=z_{x_1\alpha}\beta\le z_{x_2\alpha}\beta=x_2\alpha$ 
(since $z_{x_1\alpha}, z_{x_2\alpha}\in X\setminus B$), 
whence $x_1\alpha=x_2\alpha$ and so $z_{x_1\alpha}=z_{x_2\alpha}$, which is a contradiction. 
Thus $x_1\gamma=z_{x_1\alpha}\ge z_{x_2\alpha}=x_2\gamma$.

Finally, we suppose that $x_2\in A$ and so  $x_2\alpha\in B\beta$. 
As $x_1\alpha\not\in B\beta$, we must have $x_1\alpha\le x_2\alpha$. 
If $x_1>x_2$ then $x_1\alpha\ge x_2\alpha$ (since $x_1,x_2\in A$) 
and so $x_1\alpha=x_2\alpha\in B\beta$, which is a contradiction. 
Hence $x_1\le x_2$. 
On the other hand, from $x_1\alpha\not\in B\beta$ and $x_2\alpha\in B\beta$, 
it follows that $z_{x_1\alpha}\in X\setminus B$ and $z_{x_2\alpha}\in B$ and so 
$x_1\gamma=z_{x_1\alpha}> z_{x_2\alpha}=x_2\gamma$.

Therefore, $C$ is an ideal of $\gamma$ and so, also in this case, we have $\gamma\in\OP(X)$.

\smallskip 

So, we have showed that $\alpha=\gamma\beta$, with $\gamma\in\OP(X)$. 
Analogously, we can prove that $\beta=\lambda\alpha$, for some $\lambda\in\OP(X)$. 
Therefore $\alpha\GRL\beta$ as required. 
\end{proof} 

\medskip

In order to describe the Green's relation $\GRR$ in $\OP(X)$, 
we need to introduce the following concepts. 

\begin{definition}
Let $A$ and $B$ be two subsets of $X$ and let $\theta:A\longrightarrow B$ be a mapping.  

We say that $\theta$ is \textit{completable} in $\OP(X)$ if there exists $\gamma\in\OP(X)$ such that 
$x\gamma=x\theta$, for all $x\in A$. 
To such a transformation $\gamma$ (not necessarily unique) we call a \textit{complete extension} 
of $\theta$ in $\OP(X)$. 

If $\theta:A\longrightarrow B$ is a bijection, we say that $\theta$ is \textit{bicompletable} in $\OP(X)$ 
if both $\theta$ and its inverse $\theta^{-1}:B\longrightarrow A$ are completable in $\OP(X)$. 
\end{definition}

By the \textit{inverse} $\theta^{-1}$ of an injective mapping $\theta:A\longrightarrow B$,  
we mean the inverse mapping $\theta^{-1}:\im(\theta)\longrightarrow A$ 
of the bijection $\theta:A\longrightarrow \im(\theta)$. 
Thus, we say that an injective mapping $\theta:A\longrightarrow B$ admits a completable inverse in $\OP(X)$ 
if its inverse $\theta^{-1}:\im(\theta)\longrightarrow A$ is completable in $\OP(X)$. 

\medskip 

Let $\alpha,\beta\in\PT(X)$ be such that $\ker\alpha=\ker\beta$. 
Let $\phi\subseteq \im(\alpha)\times\im(\beta)$ be the relation defined by
$$
(a,b)\in\phi\,\text{ if and only if }\,a\alpha^{-1}=b\beta^{-1},\,\text{ for all
}\,(a,b)\in\im(\alpha)\times\im(\beta).
$$ 
Then, it is a routine matter to show that 
$\phi:\im(\alpha)\longrightarrow \im(\beta)$ is a bijection such that 
$\alpha=\beta\phi^{-1}$ and $\beta=\alpha\phi$. 
Under these conditions, we say that $\phi$ is the \textit{canonical bijection} from $\im(\alpha)$ into $\im(\beta)$. 
Notice that, given $a\in\im(\alpha)$, we have $a\phi=x\beta$, for any $x\in a\alpha^{-1}$. 

\smallskip 

Now, we can present our description of the relation $\GRR$ in $\OP(X)$. 

\begin{theorem}\label{prop3}
Let $\alpha,\beta\in\OP(X)$. Then
$\alpha\GRR\beta$ in $\OP(X)$ if and only if
$\ker(\alpha)=\ker(\beta)$ and the canonical bijection
$\phi:\im(\alpha)\longrightarrow \im(\beta)$ is bicompletable in
$\OP(X)$.
\end{theorem}
\begin{proof}
First, suppose that $\alpha\GRR\beta$ in $\OP(X)$ and 
let $\gamma,\lambda\in\OP(X)$ be such that $\alpha=\beta\gamma$ and
$\beta=\alpha\lambda$. As $\alpha\GRR\beta$ in $\OP(X)$, we also have 
$\alpha\GRR\beta$ in $\T(X)$ and so $\ker(\alpha)=\ker(\beta)$.
Let $\phi:\im(\alpha)\longrightarrow \im(\beta)$ be the canonical
bijection. 
Let $a\in\im(\alpha)$. Then $a\alpha^{-1}=(a\phi)\beta^{-1}$ and, 
by taking $x\in a\alpha^{-1}$, we have $a\lambda=x\alpha\lambda=x\beta=a\phi$. 
Hence $\lambda$ is a complete extension of $\phi$. 
Similarly, we may show that $\gamma$ is a complete extension in $\OP(X)$ of $\phi^{-1}$.
Therefore $\phi$ is bicompletable  in $\OP(X)$.

Conversely, suppose that $\ker(\alpha)=\ker(\beta)$ and the
canonical bijection $\phi:\im(\alpha)\longrightarrow \im(\beta)$
is bicompletable in $\OP(X)$. 
Let $\lambda$ and $\gamma$ 
be complete extensions in $\OP(X)$ of $\phi$ and $\phi^{-1}$, respectively. 
Since $\beta=\alpha\phi$, $\alpha=\beta\phi^{-1}$, $\dom(\phi)=\im(\alpha)$ and 
$\dom(\phi^{-1})=\im(\beta)$, we have $\beta=\alpha\lambda$ and $\alpha=\beta\gamma$, 
whence $\alpha\GRR\beta$, as required.
\end{proof}

From Theorem \ref{prop1} and Theorem \ref{prop3} it follows immediately: 

\begin{corollary}\label{Hrelation}
Let $\alpha,\beta\in\OP(X)$. Then
$\alpha\GRH\beta$ in $\OP(X)$ if and only if
$\im(\alpha)=\im(\beta)$, $\ker(\alpha)=\ker(\beta)$ and the
canonical bijection $\phi:\im(\alpha)\longrightarrow \im(\beta)$
is bicompletable in $\OP(X)$.
\end{corollary}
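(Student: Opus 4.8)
The plan is to appeal to the standard structural fact that, in any semigroup, Green's relation $\GRH$ is precisely the intersection $\GRL\cap\GRR$. Thus, for $\alpha,\beta\in\MOP(X)$, we have $\alpha\GRH\beta$ in $\MOP(X)$ if and only if both $\alpha\GRL\beta$ and $\alpha\GRR\beta$ hold in $\MOP(X)$. The entire content of the corollary is then obtained by substituting the characterizations of $\GRL$ and $\GRR$ already established.

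First I would invoke Proposition \ref{prop1}, which tells us that $\alpha\GRL\beta$ in $\MOP(X)$ is equivalent to $\im(\alpha)=\im(\beta)$. Next I would invoke Proposition \ref{prop3}, which tells us that $\alpha\GRR\beta$ in $\MOP(X)$ is equivalent to the conjunction of $\ker(\alpha)=\ker(\beta)$ and the bicompletability in $\MOP(X)$ of the canonical bijection $\theta:\im(\alpha)\longrightarrow\im(\beta)$. Conjoining these two equivalences yields exactly the three conditions listed in the statement: $\im(\alpha)=\im(\beta)$, $\ker(\alpha)=\ker(\beta)$, and $\theta$ bicompletable. The only point worth a word of care is that the canonical bijection $\theta$ appearing in the statement is only defined once $\ker(\alpha)=\ker(\beta)$; but this equality is itself one of the asserted conditions (inherited from the $\GRR$ characterization), so $\theta$ is well-defined precisely when the hypotheses under discussion are in force, and no ambiguity arises.

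I do not expect any genuine obstacle here: the result is an immediate formal consequence of $\GRH=\GRL\cap\GRR$ together with Propositions \ref{prop1} and \ref{prop3}, and the proof requires no new construction. All the difficulty of the $\GRH$ description has already been absorbed into the proof of Proposition \ref{prop3}, and in particular into the notions of completability and bicompletability defined beforehand.
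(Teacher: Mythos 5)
Your proposal is correct and coincides with the paper's own argument: the corollary is stated there as an immediate consequence of Propositions \ref{prop1} and \ref{prop3} via $\GRH=\GRL\cap\GRR$, exactly as you argue. Your remark that the canonical bijection $\theta$ is well-defined precisely when $\ker(\alpha)=\ker(\beta)$ holds is a sensible (and harmless) clarification.
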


Regarding the relation $\GRD$, we have: 

\begin{theorem}\label{prop4}
Let $\alpha,\beta\in\OP(X)$. Then
$\alpha\GRD\beta$ in $\OP(X)$ if and only if there exists a
bijective mapping  $\theta:\im(\alpha)\longrightarrow \im(\beta)$ which is bicompletable in $\OP(X)$. 
\end{theorem} 
\begin{proof}
Suppose that $\alpha\GRD\beta$. Then, there exists
$\gamma\in\OP(X)$ such that $\alpha\GRR\gamma$ and
$\gamma\GRL\beta$. By Theorem \ref{prop3}, we have that 
$\ker(\alpha)=\ker(\gamma)$ and the canonical bijection
$\phi:\im(\alpha)\longrightarrow \im(\gamma)$ is bicompletable
in $\OP(X)$. On the other hand, by Theorem \ref{prop1}, we
have $\im(\gamma)=\im(\beta)$. Thus, we obtain a bijection
$\phi:\im(\alpha)\longrightarrow \im(\gamma)=\im(\beta)$ which
is bicompletable in $\OP(X)$. 

Conversely, suppose that there exists a bijection
$\theta:\im(\alpha)\longrightarrow\im(\beta)$ which is bicompletable in
$\OP(X)$. Let $\xi$ be a complete extension of $\theta$ in $\OP(X)$. 
Let $\gamma=\alpha\xi$. Then $\gamma\in\OP(X)$. 
Moreover, $\gamma=\alpha\theta$ and $\im(\gamma)=(\im(\alpha))\theta=\im(\beta)$. 
Hence, by Theorem \ref{prop1}, we have 
$\gamma\GRL\beta$.
On the other hand, let $x,y\in X$. Since $\theta$ is a bijection, we have
$$
x\alpha=y\alpha\,\Leftrightarrow\,(x\alpha)\theta=(y\alpha)\theta\,\Leftrightarrow\,x\gamma=y\gamma.
$$
Then $\ker(\alpha)=\ker(\gamma)$.  
Let us consider the canonical bijection $\phi:\im(\alpha)\longrightarrow\im(\gamma)$.  
Next, we prove that $\phi$ is bicompletable in $\OP(X)$ by showing that $\phi=\theta$. 
Let $a\in\im(\alpha)$ and take $x\in a\alpha^{-1}$. Then, 
$a\alpha^{-1}=(a\phi)\gamma^{-1}$ and 
$a\phi=x\gamma=x\alpha\theta=a\theta$. 
Thus, $\phi=\theta$ and so $\phi$ is bicompletable in $\OP(X)$, 
whence $\alpha\GRR\gamma$, by Theorem \ref{prop3}. 

Therefore $\alpha\GRD\beta$, as required.
\end{proof}

To finish the study of the Green's relations in $\OP(X)$, we give the following description of $\GRJ$. 

\begin{theorem}\label{prop5}
Let $\alpha,\beta\in\OP(X)$. Then $\alpha\GRJ\beta$ in $\OP(X)$ if and only if there exist
injective mappings $\theta:\im(\alpha)\longrightarrow \im(\beta)$ and
$\tau:\im(\beta)\longrightarrow \im(\alpha)$ admitting completable
inverses in $\OP(X)$.
\end{theorem}

\begin{proof}
Suppose that $\alpha\GRJ\beta$. Then, there exist  
$\lambda,\gamma\in\OP(X)$ such that
$\alpha=\lambda\beta\gamma$.

For each $a\in\im(\alpha)=(\im(\lambda\beta))\gamma$, 
choose $w_a\in\im(\lambda\beta)\cap a\gamma^{-1}$. 
Then, define a mapping $\theta:\im(\alpha)\longrightarrow X$ by $a\theta=w_a$,
for all $a\in\im(\alpha)$. Notice that $\im(\theta)\subseteq\im(\beta)$. 

Next, we prove that $\theta$ is injective. 
Let $a,b\in\im(\alpha)$ be such that $a\theta=b\theta$. 
Then $w_a=w_b$ and, as $w_a\in a\gamma^{-1}$ and $w_b\in b\gamma^{-1}$, we obtain 
$a=w_a\gamma=w_b\gamma=b$. Hence, $\theta$ is an injective mapping. 

Finally, we show that $\theta:\im(\alpha)\longrightarrow \im(\beta)$ admits a completable inverse in 
$\OP(X)$. Let $w\in\dom(\theta^{-1})=\im(\theta)$ and take $a=w\theta^{-1}\in\im(\alpha)$. 
Then $w=a\theta=w_a$ and so $w\theta^{-1}=a=w_a\gamma=w\gamma$. 
Thus $\gamma$ is a complete extension of $ \theta^{-1}$ in $\OP(X)$.

Similarly, 
by taking $\delta,\xi\in\OP(X)$ such that $\beta=\delta\alpha\xi$, 
we can construct an injective mapping 
$\tau:\im(\beta)\longrightarrow\im(\alpha)$
that admits a completable inverse in $\OP(X)$.

\smallskip 

Conversely, let $\theta:\im(\alpha)\longrightarrow\im(\beta)$ be an injective
mapping such that its inverse $\theta^{-1}:\im(\theta)\longrightarrow\im(\alpha)$ is completable in
$\OP(X)$. Let $\gamma\in\OP(X)$ be a complete extension of $\theta^{-1}$.  
Notice that, by Proposition \ref{rPOP}, we have $\theta^{-1}\in\POP(X)$ and so, by Proposition \ref{iPOP}, it follows that $\theta\in\POP(X)$. 

Let $B$ be an ideal of $\beta$ and consider $C=(X\setminus B)\beta$. 
For each $c\in C$, choose $z_c\in c\beta^{-1}\cap (X\setminus B)$. 
For each $c\in\im(\beta)\setminus C$, choose $z_c\in c\beta^{-1}\cap B$ 
(notice that $\im(\beta)\setminus C\subseteq B\beta$).
Then, define a mapping $\bar\beta\in\PT(X)$, with $\dom(\bar\beta)=\im(\beta)$, 
by $c\bar\beta=z_c$, for all $c\in\im(\beta)$.  
Hence, it is a routine matter to show that, if $C\ne\emptyset$ then $\bar\beta$ admits $C$ as an ideal, 
and, if $C=\emptyset$ then $\bar\beta$ is order-preserving. 
Thus, in both cases, we have $\bar\beta\in\POP(X)$.  

Now, let $\lambda=\alpha\theta\bar\beta$. Then, as $\alpha,\theta,\bar\beta\in\POP(X)$, 
we have $\lambda\in\POP(X)$. Additionally, $\dom(\lambda)=X$ and so $\lambda\in\OP(X)$. 

Next, take $x\in X$. 
Then 
$$
x\lambda\beta\gamma=
x\alpha\theta\bar\beta\beta\gamma= 
z_{x\alpha\theta}\beta\gamma=x\alpha\theta\gamma=x\alpha\theta\theta^{-1}=x\alpha. 
$$
Thus $\alpha=\lambda\beta\gamma$.

Similarly, by considering an injective mapping $\tau:\im(\beta)\longrightarrow \im(\alpha)$ 
that admits a completable inverse in $\OP(X)$, we can construct transformations $\delta,\xi\in\OP(X)$ such that
$\beta=\delta\alpha\xi$. Therefore $\alpha\GRJ\beta$, as required.
\end{proof}

\medskip 

Now, we recall the following descriptions of the Green's relations in the semigroups $\PT(X)$ and $\I(X)$ (see \cite{Howie:1995}). 
Let $S\in\{\PT(X),\I(X)\}$ and let $\alpha,\beta\in S$. Then, in $S$, we have: 
\begin{itemize}
\item $\alpha\GRL\beta$ if and only if $\im(\alpha)=\im(\beta)$;
\item $\alpha\GRR\beta$ if and only if $\ker(\alpha)=\ker(\beta)$;
\item $\alpha\GRD\beta$ if and only if $|\im(\alpha)|=|\im(\beta)|$;
\item $\GRJ=\GRD$. 
\end{itemize}

Notice that, if $\alpha,\beta\in\I(X)$ then $\ker(\alpha)=\ker(\beta)$ if and only if $\dom(\alpha)=\dom(\beta)$. 

\smallskip 

The situation regarding the Green's relations of the partial, and the partial injective, counterparts of $\OO(X)$ and $\OP(X)$, 
such as for the regularity, is much more simple than for these semigroups.  

In fact, beginning with the Green's relations $\GRR$ and $\GRL$, 
as an immediate consequence of the regularity of each of the semigroups (recall Theorem \ref{regPOP}), we have:  

\begin{proposition}\label{rellr}
Let $S\in\{\PO(X),\POP(X),\POI(X),\POPI(X)\}$. 
Let  $\alpha,\beta\in S$. Then, in $S$, we have:
\begin{enumerate}
\item $\alpha\GRL\beta$ if and only if $\im(\alpha)=\im(\beta)$;
\item $\alpha\GRR\beta$ if and only if $\ker(\alpha)=\ker(\beta)$; 
\item $\alpha\GRH\beta$ if and only if $\im(\alpha)=\im(\beta)$ and $\ker(\alpha)=\ker(\beta)$. 
\end{enumerate}
\end{proposition} 

\smallskip 

Observe that, if $S\in\{\POI(X),\POPI(X)\}$ then, in $S$, we also have:  
\begin{enumerate}
\item $\alpha\GRR\beta$ if and only if $\dom(\alpha)=\dom(\beta)$;
\item $\alpha\GRH\beta$ if and only if $\dom(\alpha)=\dom(\beta)$ and $\im(\alpha)=\im(\beta)$. 
\end{enumerate}

\medskip 

In order to characterize the Green's relation $\GRD$, we first prove the following lemma. 

\begin{lemma}\label{cabij}
Let $\alpha,\beta\in\POP(X)$ be such that $\ker(\alpha)=\ker(\beta)$ and let $\phi:\im(\alpha)\longrightarrow \im(\beta)$ be the canonical bijection from $\im(\alpha)$ into $\im(\beta)$. 
Then $\phi$ is an orientation-preserving bijection. Moreover, if $\alpha,\beta\in\PO(X)$ then $\phi$ is an order-isomorphism.
\end{lemma}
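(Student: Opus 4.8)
The plan is to realize the canonical bijection $\theta$ as a composition of two elements of $\POP(X)$ and then read off its properties from the closure results already proved. First I would note that $\ker(\alpha)=\ker(\beta)$ forces $\dom(\alpha)=\dom(\beta)$ — call it $D$ — since the diagonal of the domain is contained in the kernel, and moreover that $\alpha$ and $\beta$ induce \emph{the same} partition of $D$ into $\ker$-classes (fibres). Choosing a transversal $T\subseteq D$ of this common partition (one point from each fibre), the restrictions $\alpha|_T:T\to\im(\alpha)$ and $\beta|_T:T\to\im(\beta)$ become bijections. For $a\in\im(\alpha)$, if $t$ denotes the unique element of $T$ with $t\alpha=a$, then $t\in a\alpha^{-1}$, and by the very definition of the canonical bijection $a\theta=t\beta$. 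In the paper's left-to-right composition convention this says precisely $\theta=(\alpha|_T)^{-1}(\beta|_T)$, with $\dom(\theta)=\im(\alpha|_T)=\im(\alpha)$ and image inside $\im(\beta)$.

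Next I would verify that each factor lies in $\POP(X)$. Since $\alpha\in\POP(X)$, the restriction $\alpha|_T$ lies in $\POP(X)$ by Proposition \ref{rPOP}; being injective it is a partial permutation, so $\alpha|_T\in\POPI(X)$, and then $(\alpha|_T)^{-1}\in\POP(X)$ by Proposition \ref{iPOP}. Likewise $\beta|_T\in\POP(X)$ by Proposition \ref{rPOP}. As $\POP(X)$ is closed under composition (Proposition \ref{POPsemigroup}), the composite $\theta=(\alpha|_T)^{-1}(\beta|_T)$ belongs to $\POP(X)$; since $\theta$ is already a bijection onto $\im(\beta)$, it is in fact an orientation-preserving partial permutation, which is exactly the first assertion.

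For the ``moreover'' clause I would run the same argument inside $\PO(X)$: if $\alpha,\beta\in\PO(X)$, then $\alpha|_T$ and $\beta|_T$ are injective and order-preserving, hence lie in $\POI(X)$, and Proposition \ref{iPOP} gives $(\alpha|_T)^{-1}\in\PO(X)$ (the inverse of an order-preserving partial permutation is order-preserving). Because $\PO(X)$ is closed under composition, $\theta=(\alpha|_T)^{-1}(\beta|_T)\in\PO(X)$, i.e. $\theta$ is order-preserving; applying Proposition \ref{iPOP} once more to the order-preserving partial permutation $\theta$ shows $\theta^{-1}$ is order-preserving too, so $\theta$ is an order-isomorphism of $\im(\alpha)$ onto $\im(\beta)$.

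The one step needing care — and the nearest thing to an obstacle — is the factorization $\theta=(\alpha|_T)^{-1}(\beta|_T)$ through a common transversal; this rests entirely on the identification of fibres $a\alpha^{-1}=(a\theta)\beta^{-1}$ coming from $\ker(\alpha)=\ker(\beta)$. Once that is in place, the statement is a direct consequence of the previously established behaviour of $\POP(X)$ and $\PO(X)$ under restriction (Proposition \ref{rPOP}), inversion (Proposition \ref{iPOP}), and composition (Proposition \ref{POPsemigroup}), so no delicate order analysis of $A$ and $B$ is required.
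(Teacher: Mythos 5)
Your proof is correct, but it takes a genuinely different route from the paper's. The paper argues directly: it fixes ideals $Y$ of $\alpha$ and $Z$ of $\beta$, assumes $Y\subseteq Z$ without loss of generality, sets $W=(Z\setminus Y)\alpha$, and then verifies by a case analysis on the location of preimages (in $Y$, in $Z\setminus Y$, or in $\dom(\alpha)\setminus Z$) that $\theta$ is order-preserving on $W$ and on $\im(\alpha)\setminus W$ and satisfies (OP2), so that $W$, when non-empty, is an ideal of $\theta$; the order-isomorphism claim for $\alpha,\beta\in\PO(X)$ falls out because then $W=\emptyset$. You instead factor $\theta=(\alpha|_T)^{-1}(\beta|_T)$ through a common transversal $T$ of the shared fibres --- the identity $a\theta=t\beta$ for the unique $t\in T\cap a\alpha^{-1}$ is exactly the defining property $a\alpha^{-1}=(a\theta)\beta^{-1}$ of the canonical bijection --- and then invoke the closure results of Section \ref{pre}: Proposition \ref{rPOP} (restriction), Proposition \ref{iPOP} (inversion of an orientation-preserving, resp.\ order-preserving, partial permutation) and Proposition \ref{POPsemigroup} (closure of $\POP(X)$, resp.\ $\PO(X)$, under composition). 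This is sound: all three cited propositions apply exactly as you use them, the domains match up ($\dom((\alpha|_T)^{-1})=\im(\alpha)$ and $\im((\alpha|_T)^{-1})=T=\dom(\beta|_T)$), and the factorization is independent of the choice of $T$ because $\ker(\alpha)=\ker(\beta)$ makes $x\beta$ constant on each fibre $a\alpha^{-1}$. What your route buys is brevity and modularity: the delicate order bookkeeping is outsourced to the already-proved (and itself laborious) Proposition \ref{POPsemigroup}, and you obtain $\theta\in\POPI(X)$ (resp.\ $\theta\in\POI(X)$) directly. What the paper's computation buys is an explicit ideal for $\theta$, namely $W=(Z\setminus Y)\alpha$, though that extra information is not exploited later. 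One minor remark: your final appeal to Proposition \ref{iPOP} for $\theta^{-1}$ in the order-preserving case is slightly redundant, since an order-preserving bijection between subsets of a chain automatically has an order-preserving inverse, but it is certainly correct.
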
 
\begin{proof}
First, observe that $\dom(\alpha)=\dom(\beta)$, since $\ker(\alpha)=\ker(\beta)$. 
Let $Y$ be an ideal of $\alpha$ and $Z$ be an ideal of $\beta$. As $Y$ and $Z$ are order ideals of $\dom(\alpha)$, then $Y\subseteq Z$ or $Z\subseteq Y$. Without loss of generality, we suppose that $Y\subseteq Z$. Let $W=(Z\setminus Y)\alpha$. 

\smallskip 

We begin by showing that $\phi$ is order-preserving on $W$. 
Let $a_1,a_2\in W$ be such that $a_1<a_2$. 
Then, there exist $x_1,x_2\in Z\setminus Y$ such that $x_1\in a_1\alpha^{-1}$ and $x_2\in a_2\alpha^{-1}$. 
As $Z\setminus Y\subseteq \dom(\alpha)\setminus Y$, if $x_2<x_1$ then $a_2=x_2\alpha\le x_1\alpha=a_1$, which is a contradiction. 
Hence, $x_1\le x_2$ and, as $Z\setminus Y\subseteq Z$, we have $a_1\phi=x_1\beta\le x_2\beta=a_2\phi$. 

\smallskip 

Next, we prove that $\phi$ is order-preserving on $\im(\alpha)\setminus W$. 
Let $a_1,a_2\in \im(\alpha)\setminus W$ be such that $a_1<a_2$. 
Take $x_1,x_2\in \dom(\alpha)$ such that $x_1\in a_1\alpha^{-1}$ and $x_2\in a_2\alpha^{-1}$. Clearly, $x_1,x_2\not\in Z\setminus Y$ and so 
$x_1,x_2\in Y\cup (\dom(\alpha)\setminus Z)$. 
Suppose that $x_1,x_2\in Y$. If $x_2<x_1$ then $a_2=x_2\alpha\le x_1\alpha=a_1$, which is a contradiction. 
Hence, $x_1\le x_2$ and, as $Y\subseteq Z$, we have $a_1\phi=x_1\beta\le x_2\beta=a_2\phi$. 
Secondly, consider that $x_1\in Y$ and $x_2\in\dom(\alpha)\setminus Z$. 
As $\dom(\alpha)\setminus Z\subseteq\dom(\alpha)\setminus Y$, we have $x_1\in Y$ and $x_2\in\dom(\alpha)\setminus Y$, whence $a_1=x_1\alpha\ge x_2\alpha=a_2$, which is a contradiction. So, this case does not occur. 
In third place,  if $x_1\in\dom(\alpha)\setminus Z$ and $x_2\in Y$ then, as $Y\subseteq Z$, 
$x_1\in\dom(\beta)\setminus Z$ and $x_2\in Z$ and so $a_1\phi=x_1\beta\le x_2\beta=a_2\phi$. 
Finally, suppose that $x_1,x_2\in\dom(\alpha)\setminus Z$. As $\dom(\alpha)\setminus Z\subseteq\dom(\alpha)\setminus Y$, 
if $x_2<x_1$ then  $a_2=x_2\alpha\le x_1\alpha=a_1$, which is a contradiction. 
Hence, $x_1,x_2\in\dom(\beta)\setminus Z$ and $x_1\le x_2$ , from which follows that $a_1\phi=x_1\beta\le x_2\beta=a_2\phi$. 

\smallskip 

Now, let $a_1\in W$ and $a_2\in \im(\alpha)\setminus W$. We aim to show that $a_1\le a_2$ and $a_1\phi\ge a_2\phi$. 
Take $x_1\in Z\setminus Y$ and $x_2\in Y\cup (\dom(\alpha)\setminus Z)$ such that $x_1\in a_1\alpha^{-1}$ and $x_2\in a_2\alpha^{-1}$. 
First, suppose that $x_2\in Y$. As $x_1\in Z\setminus Y\subseteq \dom(\alpha)\setminus Y$, we have $x_2<x_1$ and 
$a_2=x_2\alpha\ge x_1\alpha=a_1$. 
On the other hand, $x_1\in Z\setminus Y\subseteq Z$, $x_2\in Y\subseteq Z$ and $x_2< x_1$ imply $a_2\phi=x_2\beta\le x_1\beta=a_1\phi$. 
Secondly, suppose that $x_2\in \dom(\alpha)\setminus Z$. 
Then $x_1\in Z\setminus Y\subseteq Z$ and $x_2\in \dom(\beta)\setminus Z$, whence $x_1<x_2$ and $a_1\phi=x_1\beta\ge x_2\beta=a_2\phi$. 
Furthermore, $x_1\in Z\setminus Y\subseteq \dom(\alpha)\setminus Y$, $x_2\in \dom(\alpha)\setminus Z\subseteq\dom(\alpha)\setminus Y$ and 
$x_1<x_2$ imply $a_1=x_1\alpha\le x_2\alpha=a_2$. 

\smallskip

Thus, if $W\ne\emptyset$, we proved that $W$ is an ideal of $\phi$. On the other hand, if $W=\emptyset$ then we proved that $\phi$ is order-preserving (on $\im(\alpha)$). In both cases,  we have $\phi\in\POPI(X)$. Moreover, if $\alpha,\beta\in\PO(X)$ then $W=\emptyset$ and so 
$\phi\in\POI(X)$, as required. 
\end{proof} 

\begin{proposition}\label{reld}
Let $S\in\{\PO(X),\POI(X)\}$ [respectively, $S\in\{\POP(X),\POPI(X)\}$]. 
Let  $\alpha,\beta\in S$. Then $\alpha\GRD\beta$ in $S$ if and only if there exists an order-isomorphism 
[respectively, an orientation-preserving bijection] $\theta:\im(\alpha)\longrightarrow \im(\beta)$. 
\end{proposition}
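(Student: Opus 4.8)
The plan is to reduce everything to the standard decomposition $\GRD=\GRR\circ\GRL$, valid in any semigroup, and then to read off the two relevant relations from Proposition \ref{rellr} while letting Lemma \ref{cabij} supply the bijection $\theta$. I treat the two bracketed cases in parallel: ``order-isomorphism'' together with $\PO(X)/\POI(X)$ on one side, and ``orientation-preserving bijection'' together with $\POP(X)/\POPI(X)$ on the other. Since $\PO(X)\subseteq\POP(X)$ and $\POI(X)\subseteq\POPI(X)$, Lemma \ref{cabij} is available in all four semigroups, and the order-preserving conclusion is exactly the sharper alternative offered by that lemma.

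For the direct implication, suppose $\alpha\GRD\beta$ in $S$. As $\GRL$ and $\GRR$ commute, there is $\gamma\in S$ with $\alpha\GRR\gamma$ and $\gamma\GRL\beta$. By Proposition \ref{rellr} this means $\ker(\alpha)=\ker(\gamma)$ and $\im(\gamma)=\im(\beta)$. Applying Lemma \ref{cabij} to the pair $\alpha,\gamma$ (whose kernels agree), the canonical bijection $\theta\colon\im(\alpha)\longrightarrow\im(\gamma)$ is an orientation-preserving bijection, and an order-isomorphism when $\alpha,\gamma\in\PO(X)$. Since $\im(\gamma)=\im(\beta)$, this same $\theta$ is the desired map $\im(\alpha)\longrightarrow\im(\beta)$, of the appropriate type in each case.

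For the converse, assume a suitable $\theta\colon\im(\alpha)\longrightarrow\im(\beta)$ is given. The first observation is that $\theta$ itself belongs to $S$: being an orientation-preserving (resp. order-preserving) bijection between subsets of $X$, it lies in $\POP(X)\cap\I(X)=\POPI(X)$ (resp. in $\PO(X)\cap\I(X)=\POI(X)$). Put $\gamma=\alpha\theta$. Because $\im(\alpha)=\dom(\theta)$ we get $\dom(\gamma)=\dom(\alpha)$, and closure under composition (Proposition \ref{POPsemigroup} for the orientation-preserving monoids, together with the submonoid/inverse-submonoid property for $\PO(X)$, $\POI(X)$, $\POPI(X)$) shows $\gamma\in S$. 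Now $\im(\gamma)=(\im(\alpha))\theta=\im(\beta)$, so $\gamma\GRL\beta$ by Proposition \ref{rellr}; and injectivity of $\theta$ gives $x\gamma=y\gamma\Leftrightarrow x\alpha=y\alpha$, i.e. $\ker(\gamma)=\ker(\alpha)$, so $\alpha\GRR\gamma$ by Proposition \ref{rellr}. Hence $\alpha\GRR\gamma\GRL\beta$, and $\alpha\GRD\beta$, as required.

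The argument is essentially routine given the machinery already in place; the only points needing care are the bookkeeping that $\theta$ genuinely lies in $S$ (so that $\alpha\theta$ is a legitimate composition inside $S$, including the injectivity needed for the $\I(X)$-versions) and the uniform checking that Lemma \ref{cabij} and Proposition \ref{rellr} apply in each of the four semigroups. I anticipate no serious obstacle here, precisely because Lemma \ref{cabij} already performs the substantive work of manufacturing an order-isomorphism or orientation-preserving bijection from equality of kernels.
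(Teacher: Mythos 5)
Your proposal is correct and follows essentially the same route as the paper: the forward direction uses $\GRD=\GRR\circ\GRL$ with Proposition \ref{rellr} and Lemma \ref{cabij} applied to the intermediate element $\gamma$, and the converse sets $\gamma=\alpha\theta$ and verifies $\im(\gamma)=\im(\beta)$, $\dom(\gamma)=\dom(\alpha)$ and $\ker(\gamma)=\ker(\alpha)$ exactly as the paper does. Your explicit check that $\theta$ itself lies in $\POPI(X)$ (resp.\ $\POI(X)$), so that $\alpha\theta\in S$, merely makes precise a step the paper leaves implicit.
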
 
\begin{proof}
Suppose that $\alpha\GRD\beta$. Then, there exists
$\gamma\in S$ such that $\alpha\GRR\gamma$ and
$\gamma\GRL\beta$. By Proposition \ref{rellr}, we have that 
$\ker(\alpha)=\ker(\gamma)$ and $\im(\gamma)=\im(\beta)$. 
Then, by Lemma \ref{cabij}, the canonical bijection $\phi:\im(\alpha)\longrightarrow \im(\gamma)=\im(\beta)$ from $\im(\alpha)$ into $\im(\gamma)$ 
is an order-isomorphism [respectively, an orientation-preserving bijection]. 

Conversely, suppose that there exists an order-isomorphism 
[respectively, an orientation-preserving bijection] $\theta:\im(\alpha)\longrightarrow \im(\beta)$. 
Let $\gamma=\alpha\theta$. Then $\gamma\in S$. 
Moreover, $\im(\gamma)=(\im(\alpha))\theta=\im(\beta)$. 
On the other hand, $\dom(\gamma)=\dom(\theta)\alpha^{-1}=\im(\alpha)\alpha^{-1}=\dom(\alpha)$ and, 
since $\theta$ is a bijection, we have
$$
x\alpha=y\alpha\,\Leftrightarrow\,(x\alpha)\theta=(y\alpha)\theta\,\Leftrightarrow\,x\gamma=y\gamma, 
$$
for all $x,y\in\dom(\alpha)=\dom(\gamma)$. 
Then $\ker(\alpha)=\ker(\gamma)$.  
Hence, by Proposition \ref{rellr}, we have 
$\gamma\GRL\beta$ and $\alpha\GRR\gamma$
and so $\alpha\GRD\beta$, as required.
\end{proof}

Finally, for $\GRJ$, we have: 

\begin{proposition}\label{relj}
Let $S\in\{\PO(X),\POI(X)\}$ [respectively, $S\in\{\POP(X),\POPI(X)\}$]. 
Let  $\alpha,\beta\in S$. Then $\alpha\GRJ\beta$ in $S$ if and only if there exist order-preserving 
[respectively, orientation-preserving] injections $\theta:\im(\alpha)\longrightarrow \im(\beta)$ and
$\tau:\im(\beta)\longrightarrow \im(\alpha)$.
\end{proposition}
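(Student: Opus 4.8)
The plan is to unwind the definition of Green's relation $\GRJ$: in the monoid $S$ (which has an identity, so $S^1=S$), we have $\alpha\GRJ\beta$ exactly when $\alpha=\lambda\beta\gamma$ and $\beta=\delta\alpha\xi$ for some $\lambda,\gamma,\delta,\xi\in S$. The whole point, compared with the $\MOP(X)$ case treated in Proposition \ref{prop5}, is that the four semigroups here are all closed under restriction (Proposition \ref{rPOP}, trivially so for the order-preserving ones) and under inversion of their injective elements (Proposition \ref{iPOP}). These two closure properties are precisely what let me replace the more delicate ``completable inverse'' condition of Proposition \ref{prop5} by the plain existence of order-preserving (resp.\ orientation-preserving) injections between the images.

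For the direct implication, I would start from a factorization $\alpha=\lambda\beta\gamma$ with $\lambda,\gamma\in S$ and produce $\theta\colon\im(\alpha)\longrightarrow\im(\beta)$. Set $E=\im(\lambda\beta)\cap\dom(\gamma)$; since $\im(\lambda\beta)\subseteq\im(\beta)$ we have $E\subseteq\im(\beta)$, and the restriction $\gamma|_E$ lies in $S$ by Proposition \ref{rPOP} and maps $E$ onto $\im(\alpha)$. Now, exactly as in the transversal construction of Section \ref{reg}, choose for each $a\in\im(\alpha)$ a preimage $z_a\in a(\gamma|_E)^{-1}$ and put $D=\{z_a\mid a\in\im(\alpha)\}\subseteq\im(\beta)$; then $\gamma|_D\colon D\longrightarrow\im(\alpha)$ is an order-preserving (resp.\ orientation-preserving) bijection, being a further restriction of $\gamma$. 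By Proposition \ref{iPOP} its inverse $\theta=(\gamma|_D)^{-1}\colon\im(\alpha)\longrightarrow D\subseteq\im(\beta)$ is an order-preserving (resp.\ orientation-preserving) injection, as desired. The symmetric argument applied to a factorization $\beta=\delta\alpha\xi$ yields the injection $\tau\colon\im(\beta)\longrightarrow\im(\alpha)$.

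For the converse, suppose such injections exist. From $\theta\colon\im(\alpha)\longrightarrow\im(\beta)$ I would build a factorization of $\alpha$ as follows. Note that $\theta$, being an order-preserving (resp.\ orientation-preserving) partial injection, is itself an element of $S$, and so is $\theta^{-1}\colon\im(\theta)\longrightarrow\im(\alpha)$ by Proposition \ref{iPOP}. Take a section $\bar\beta$ of $\beta$ constructed as in Section \ref{reg} (so $\bar\beta\in\POPI(X)$, or $\bar\beta\in\POI(X)$ in the order case, hence $\bar\beta\in S$, with $\dom(\bar\beta)=\im(\beta)$ and $\bar\beta\beta=\mathrm{id}_{\im(\beta)}$). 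Putting $\gamma=\theta^{-1}$ and $\lambda=\alpha\theta\bar\beta$, both of which lie in $S$, I then verify $\lambda\beta\gamma=\alpha\theta\bar\beta\beta\theta^{-1}=\alpha\theta\theta^{-1}=\alpha$, where the middle step uses that $\im(\alpha\theta)=\im(\theta)\subseteq\im(\beta)$, on which $\bar\beta\beta$ is the identity. Symmetrically, $\tau$ gives $\beta=\delta\alpha\xi$ with $\delta,\xi\in S$. The two factorizations give $\alpha\in S\beta S$ and $\beta\in S\alpha S$, that is $\alpha\GRJ\beta$.

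The main obstacle is the direct implication, where one must extract genuinely order/orientation-preserving injections from a bare factorization: an arbitrary choice of preimages only produces an injection, and the key observation is that a suitable restriction of $\gamma$ is a bijection onto $\im(\alpha)$ that is automatically order/orientation-preserving, so that Proposition \ref{iPOP} converts it into the required injection on the images. The remaining care is the bookkeeping needed to run the order-preserving and orientation-preserving cases in parallel, which is routine since Propositions \ref{rPOP} and \ref{iPOP} preserve both properties and keep injective elements inside $\POI(X)$ and $\POPI(X)$ respectively.
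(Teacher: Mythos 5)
Your proposal is correct and follows essentially the same route as the paper's proof: in the direct implication your $\theta=(\gamma|_D)^{-1}$, obtained by restricting $\gamma$ to a transversal chosen inside $\im(\lambda\beta)$ and invoking Propositions \ref{rPOP} and \ref{iPOP}, is exactly the paper's map $a\mapsto w_a$ with $w_a\in\im(\lambda\beta)\cap a\gamma^{-1}$, and in the converse you use the same section $\bar\beta$ of $\beta$ (the Section \ref{reg} construction) and the same factorization $\alpha=(\alpha\theta\bar\beta)\,\beta\,\theta^{-1}$.
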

\begin{proof}
First, suppose that $\alpha\GRJ\beta$. 
Let $\lambda,\gamma\in S$ be such that $\alpha=\lambda\beta\gamma$.

For each $a\in\im(\alpha)=(\im(\lambda\beta))\gamma$, 
choose $w_a\in\im(\lambda\beta)\cap a\gamma^{-1}$. 
Then, define a mapping $\theta:\im(\alpha)\longrightarrow X$ by $a\theta=w_a$,
for all $a\in\im(\alpha)$. Notice that $\im(\theta)\subseteq\im(\beta)$. 

Next, we prove that $\theta$ is injective. 
Let $a,b\in\im(\alpha)$ be such that $a\theta=b\theta$. 
Then $w_a=w_b$ and, as $w_a\in a\gamma^{-1}$ and $w_b\in b\gamma^{-1}$, we obtain 
$a=w_a\gamma=w_b\gamma=b$. Hence, $\theta$ is an injective mapping. 

Let us consider the inverse mapping $\theta^{-1}:\im(\theta)\longrightarrow\im(\alpha)$ of $\theta$. 
Let $w\in\dom(\theta^{-1})=\im(\theta)$ and take $a=w\theta^{-1}\in\im(\alpha)$. 
Then $w=a\theta=w_a$ and so $w\theta^{-1}=a=w_a\gamma=w\gamma$.  
Therefore, $\theta^{-1}$ is a restriction of $\gamma$ and so $\theta^{-1}$ is an order-preserving transformation  
[respectively, orientation-preserving transformation, by Proposition \ref{rPOP}]. 
Thus, as $\theta:\im(\alpha)\longrightarrow\im(\theta)$ is the inverse of $\theta^{-1}$, by Proposition \ref{iPOP}, $\theta$ also is an order-preserving  
[respectively, orientation-preserving] transformation.  

Similarly, 
by taking $\delta,\xi\in\OP(X)$ such that $\beta=\delta\alpha\xi$, 
we can construct an order-preserving 
[respectively, orientation-preserving] injection $\tau:\im(\beta)\longrightarrow \im(\alpha)$. 

\smallskip 

Conversely, let $\theta:\im(\alpha)\longrightarrow\im(\beta)$ be an injective 
order-preserving [respectively, orientation-preserving] transformation.  
Then, by Proposition \ref{iPOP}, $\theta^{-1}:\im(\theta)\longrightarrow\im(\alpha)$ also is an order-preserving  
[respectively, orientation-preserving] transformation.  

Let $B=\dom(\beta)$ [respectively, $B$ be an ideal of $\beta$] and consider $C=(\dom(\beta)\setminus B)\beta$. 
For each $c\in C$, choose $z_c\in c\beta^{-1}\cap (\dom(\beta)\setminus B)$. 
For each $c\in\im(\beta)\setminus C$, choose $z_c\in c\beta^{-1}\cap B$ 
(notice that $\im(\beta)\setminus C\subseteq B\beta$).
Then, define a mapping $\bar\beta\in\PT(X)$, with $\dom(\bar\beta)=\im(\beta)$, 
by $c\bar\beta=z_c$, for all $c\in\im(\beta)$.  
Hence, it is a routine matter to show that $\bar\beta$ is order-preserving 
[respectively, if $C\ne\emptyset$ then $\bar\beta$ admits $C$ as an ideal, 
and, if $C=\emptyset$ then $\bar\beta$ is order-preserving. 
Thus, in both cases, we have that $\bar\beta$ is orientation-preserving].  

Now, let $\lambda=\alpha\theta\bar\beta$. Then, as $\alpha,\theta,\bar\beta\in S$, 
we have $\lambda\in S$.  Take $x\in \dom(\alpha)$. 
Then 
$$
x\lambda\beta\theta^{-1}=
x\alpha\theta\bar\beta\beta\theta^{-1}= 
z_{x\alpha\theta}\beta\theta^{-1}=x\alpha\theta\theta^{-1}=x\alpha. 
$$
Thus $\alpha=\lambda\beta\theta^{-1}$. Notice that $\theta^{-1}\in S$. 

Similarly, by considering an injective order-preserving [respectively, orientation-preserving] transformation $\tau:\im(\beta)\longrightarrow \im(\alpha)$, we can construct transformations $\delta,\xi\in S$ such that
$\beta=\delta\alpha\xi$. Therefore $\alpha\GRJ\beta$, as required.
\end{proof}

\medskip 

We finish this paper by showing that, such as for $\OO(X)$ (see \cite{Fernandes&al:2014}),   
for any 
$$
S\in\{\OP(X), \PO(X), \POP(X), \POI(X), \POPI(X)\},
$$ 
we may have $\GRD\subsetneq\GRJ$ in $S$. 

\smallskip 

In the following lemma and examples, we consider the set of real numbers $\R$ equipped with the usual order. 

\begin{lemma}\label{exes}
Let $a,b,c,d\in\R$ be such that $a<b$ and $c<d$. 
Then, there exists no orientation-preserving bijection from the interval $I=]a,b[$ into the interval $J=[c,d]$. 
\end{lemma}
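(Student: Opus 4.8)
The plan is to argue by contradiction. Suppose $\theta\colon I\longrightarrow J$ is an orientation-preserving bijection, where $I=\,]a,b[$ and $J=[c,d]$ with $a<b$ and $c<d$. Since $c<d$, the image $J$ is not a singleton, so $\theta$ is non-constant and therefore admits a (unique) ideal $Y\subseteq I$. I would split the argument according to whether $\theta$ is order-preserving, i.e.\ $Y=I$, or genuinely orientation-preserving, i.e.\ $Y$ is a non-empty proper subset of $I$.

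In the order-preserving case $\theta$ is an order-isomorphism of $I$ onto $J$, and by Proposition~\ref{iPOP} its inverse $\theta^{-1}\colon J\longrightarrow I$ is order-preserving as well. Then $d\theta^{-1}$ would be the greatest element of $\im(\theta^{-1})=I=\,]a,b[$, contradicting the fact that an open interval has no maximum. This settles this case at once (one could equally invoke $c$ and the absence of a minimum of $I$).

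For the genuinely orientation-preserving case the crux is a cut-point analysis. Here $Y$ and $I\setminus Y$ are, respectively, a non-empty order ideal and a non-empty order filter of $I$, with every point of $Y$ below every point of $I\setminus Y$, by (OP2). First I would show that $s:=\sup Y=\inf(I\setminus Y)$ exists and satisfies $a<s<b$, using that $I$ is an interval so that no point can lie strictly between the two parts; consequently $s$ belongs to exactly one of $Y$, $I\setminus Y$, and correspondingly exactly one of $\max Y$, $\min(I\setminus Y)$ exists. On the image side, (OP1) makes $\theta|_Y$ and $\theta|_{I\setminus Y}$ order-isomorphisms onto the disjoint blocks $Y\theta$ and $(I\setminus Y)\theta$, which partition $J$ with $Y\theta$ lying entirely above $(I\setminus Y)\theta$. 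Since $J$ has both a maximum $d$ and a minimum $c$ while the two blocks are disjoint, one checks that necessarily $d\in Y\theta$ and $c\in(I\setminus Y)\theta$. Pulling $d$ and $c$ back through the respective order-isomorphisms then forces \emph{both} $\max Y=d\theta^{-1}$ and $\min(I\setminus Y)=c\theta^{-1}$ to exist, contradicting the exclusive alternative furnished by the cut-point analysis.

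The main obstacle is precisely this last reconciliation of the order structure of the open domain with that of the closed codomain. The governing asymmetry is that $J=[c,d]$ has both endpoints attained, whereas the two blocks $Y$ and $I\setminus Y$ of the open interval can contribute at most one attained extremum between them, namely the shared cut point $s$; matching the two attained extrema $c,d$ of $J$ against the at-most-one attained extremum among $\max Y$ and $\min(I\setminus Y)$ is where the contradiction crystallizes. The remaining verifications---that $s$ is interior to $]a,b[$, that $\theta$ restricts to order-isomorphisms on each block (again via Proposition~\ref{iPOP}), and that $c,d$ land in the expected blocks---are routine once this framework is in place.
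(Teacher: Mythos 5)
Your proof is correct and is essentially the paper's own argument run in mirror image: the paper performs the Dedekind-cut analysis on the image side, taking $c'=\sup\left((I\setminus Y)\theta\right)$ and $d'=\inf(Y\theta)$ inside the closed interval $J$ and showing the cut point would be an attained minimum of $Y\theta$ (impossible since $Y$, an order ideal of the open interval $I$, has no minimum), whereas you place the cut in the domain, showing $s=\sup Y=\inf(I\setminus Y)$ lies in $I$ so that at most one of $\max Y$, $\min(I\setminus Y)$ is attained, and then pull the attained endpoints $c,d$ of $J$ back through the block order-isomorphisms to force both. Since both arguments use the same decomposition into $Y$ and $I\setminus Y$, the same ingredients (OP2, Proposition~\ref{iPOP}, completeness of $\R$, and the open/closed asymmetry), and the same initial disposal of the order-preserving case via the missing extremum of $I$, this is the same approach executed on the other side of the bijection.
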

\begin{proof}
For a contradiction, suppose there exists an orientation-preserving bijection $\theta:I\longrightarrow J$. 
Let $Y$ be an ideal of $\theta$. If $Y=I$ then $\theta$ would be an order-preserving bijection from $I$ into $J$, which is not possible 
(for instance, because $J$ has a minimum and $I$ does not). 
Hence, $Y$ is a proper order ideal of $I$ and $I\setminus Y$ is a non-empty order filter of $I$. 
Since $I$ has no minimum or maximum, then $Y$ has no minimum and $I\setminus Y$ has no maximum. 
Hence, as $\theta$ is order-preserving both in $Y$ and in $I\setminus Y$, 
$Y\theta$ also has no minimum and $(I\setminus Y)\theta$ also has no maximum. 
On the other hand, since each element of $(I\setminus Y)\theta$ is a lower bound of $Y\theta$ and 
each element of $Y\theta$ is an upper bound of $(I\setminus Y)\theta$, 
we have that $(I\setminus Y)\theta$ has a supremum $c'\in I$, $Y\theta$ has an infimum $d'\in I$ and $c'\le d'$. 
Since $(I\setminus Y)\theta$ has no maximum, $c'\not\in (I\setminus Y)\theta$ and so,
as $J=I\theta=(I\setminus Y)\theta\cup Y\theta$, we have $c'\in Y\theta$, whence $d'\le c'$. 
Therefore, $d'=c'$ and so $d'$ is the minimum of $Y\theta$, which is a contradiction.  
Thus, there exists no orientation-preserving bijection $\theta:I\longrightarrow J$, as required. 
\end{proof} 

\smallskip 

\begin{example}\em 
Let  $\alpha,\beta\in\T(\R)$ be defined by $x\alpha=\arctan(x)$, for $x\in\R$, and 
$$
x\beta=\left\{
\begin{array}{cl}
-1 & \mbox{if $x<-1$}\\
x & \mbox{if $-1\le x\le1$}\\
1& \mbox{if $x>1$.}
\end{array}
\right. 
$$
Then $\alpha,\beta\in\OO(\R)$ and so, in particular, $\alpha,\beta\in\OP(\R)$. 
In \cite{Fernandes&al:2014} the authors have showed that  $(\alpha,\beta)\not\in\GRD$ and $(\alpha,\beta)\in\GRJ$ in $\OO(\R)$. 
Hence, as $\OO(\R)$ is a subsemigroup of $\OP(\R)$, we may immediately deduce that also $(\alpha,\beta)\in\GRJ$ in $\OP(\R)$. 
On the other hand, $\im(\alpha)=\left]-\pi/2,\pi/2\right[$ and $\im(\beta)=\left[-1,1\right]$ and so, by the above lemma, 
there exists no orientation-preserving bijection $\theta:\im(\alpha)\longrightarrow\im(\beta)$. Therefore, 
in view of Proposition \ref{rPOP}, there exists no (bi)completable in $\OP(\R)$ bijection $\theta:\im(\alpha)\longrightarrow\im(\beta)$. 
Thus, $(\alpha,\beta)\not\in\GRD$ in $\OP(\R)$ and so we have $\GRD\subsetneq\GRJ$ in $\OP(\R)$. 
\end{example}

\smallskip 

Now, observe that $\POI(X)$ is a subsemigroup of $S$, for any $S\in\{\PO(X),\POP(X),\POPI(X)\}$. 
Furthermore, $S$ is a subsemigroup of $\POP(X)$, for any $S\in\{\PO(X),\POI(X),\POPI(X)\}$. 

\smallskip 

\begin{example}\em 
Let $a,b,c,d\in\R$ be such that $a<b$ and $c<d$. Let $\alpha$ and $\beta$ be the partial identities on the intervals $]a,b[$ and $[c,d]$, respectively.  Then $\alpha,\beta\in\POI(\R)$ and so $\alpha,\beta\in S$, for any $S\in\{\PO(\R),\POP(\R),\POPI(\R)\}$. 
Moreover, $\im(\alpha)=]a,b[$ and $\im(\beta)=[c,d]$. 

By Lemma \ref{exes} there exists no orientation-preserving bijection from $\im(\alpha)$ into $\im(\beta)$ and so, by Proposition \ref{reld}, 
$(\alpha,\beta)\not\in\GRD$ in $\POP(\R)$. 
Therefore, also $(\alpha,\beta)\not\in\GRD$ in $S$,  for any $S\in\{\PO(\R),\POI(\R),\POPI(\R)\}$. 

On the other hand, it is clear that there exist order-preserving injections $\theta:]a,b[\longrightarrow [c,d]$ and
$\tau:[c,d]\longrightarrow ]a,b[$ 
(for instance, $x\theta=\frac{1}{3(b-a)}((d-c)x+2bc-2ad+bd-ac)$, for $x\in ]a,b[$, and 
$x\tau=\frac{1}{3(d-c)}((b-a)x+2ad-2bc+bd-ac)$, for $x\in [c,d]$) and so, 
by Proposition  \ref{relj}, $(\alpha,\beta)\in\GRJ$ in $\POI(\R)$. 
Therefore, also $(\alpha,\beta)\in\GRJ$ in $S$,  for any $S\in\{\PO(\R),\POP(\R),\POPI(\R)\}$. 
Thus $\GRD\subsetneq\GRJ$ in $S$, for any $S\in\{\PO(\R),\POP(\R),\POI(\R),\POPI(\R)\}$. 
\end{example} 

\begin{question}
Do we also have $\GRD\subsetneq\GRJ$ in $S$, for any/some $S\in\{\OP(\Z),\POPI(\Z),\POP(\Z)\}$? 
And, in general, what happens for an arbitrary infinite countable chain? 
\end{question}

\section{Some more open problems}

Recall that, for a subset $A$ of a semigroup $S$, the \textit{relative rank} of $S$ modulo $A$ is the minimum cardinality of a 
subset $B$ of $S$ such that $\langle A \cup B \rangle = S$. This cardinal is denoted by $\rank(S:A)$.

The notion of relative rank was introduced by Ru\v skuc in \cite{Ruskuc:1994}, who proved  that the rank of a finite Rees matrix semigroup
$\mathcal{M}[G; I,\Lambda; P]$, with the sandwich matrix $P$ in normal form, is equal to
$\textrm{max}\{|I|,|\Lambda|, \rank (G : H)\}$, where $H$ is the subgroup of $G$ generated by the
entries of $P$.
In \cite{Howie&Ruskuc&Higgins:1998}, Howie et al. considered the relative ranks of the full transformation
semigroup $\T(X)$ on $X$, where $X$ is an infinite set, modulo some distinguished subsets of $\T(X)$. 
For instance, they showed that $\rank (\T(X) : \S(X)) = 2$ and $\rank (\T(X) : J) = 0$, where $J$ is the top $\mathcal{J}$-class of $\T(X)$, i.e. 
$J=\{\alpha\in\T(X)\mid |\im(\alpha)|=|X|\}$.  On the other hand, the relative rank of $\T(X)$ modulo the subsemigroup $\OO(X)$ 
was considered by Higgins et al. in \cite{Higgins&Mitchell&Ruskuc:2003}.
They showed that $\rank(\T(X) : \OO(X)) = 1$, when $X$ is an arbitrary countable chain or an arbitrary well-ordered set, 
while $\rank(\T(\R) : \OO(\R))$ is uncountable, by considering the usual order of $\R$. 

\begin{problem}
Determine $\rank(\T(X) : \OP(X))$ and $\rank(\OP(X) : \OO(X))$. 
\end{problem}

Let $J=\{\alpha\in\OO(X)\mid |\im(\alpha)|=|X|\}$. 
Observe that, unlike the analogous set for $\T(X)$, $J$ is not necessarily a 
$\mathcal{J}$-class of $\OO(X)$ (see \cite{Fernandes&al:2014}). 
In \cite{Dimitrova&al:2017}, Dimitrova et al. showed that $\rank (\OO(X) : J) = 0$, for $X\in\{\Z,\Q,\R\}$. 
On the contrary, they proved that  $\rank (\OO(\N) : J) = \aleph_0$. 
In fact, for an infinite countable chain $X$, they gave a necessary and sufficient condition on $X$ for $\rank (\OO(X) : J) = 0$ to hold. 

\begin{problem}
Let $J=\{\alpha\in\OP(X)\mid |\im(\alpha)|=|X|\}$. Determine $\rank(\OP(X) : J)$. 
\end{problem}

\section*{Acknowledgments}
We acknowledge the anonymous referee for his/her valuable suggestions.
Thanks to his/her excellent, careful and in-depth work, this paper has been significantly improved. 
We wish to express him/her our best thanks.


\lastpage 

\end{document}